\theoremstyle{definition}
\theoremstyle{plain}
\newcommand{\ad}{\textup{ad}}
\newcommand{\bbar}{\overline{b} }
\newcommand{\Bers}{\textup{Bers}}
\newcommand{\C}{\mathbb{C}}
\newcommand{\calC}{\mathcal{C}}
\newcommand{\Can}{\textup{Can}}
\newcommand{\cbar}{\overline{c} }
\newcommand{\Cg}{\mathfrak{C}_{g}}
\newcommand{\Cgn}{\mathfrak{C}_{g,n}}
\newcommand{\Cgone}{\mathfrak{C}_{g,1}}
\newcommand{\Cgtwo}{\mathfrak{C}_{g,2}}
\newcommand{\Chat}{\widehat{\C}}
\newcommand{\D}{\mathcal{D}}
\newcommand{\del}{\partial}
\newcommand{\delCg}{\nabla_{\hspace{-1 mm}\Cg}}
\newcommand{\delCgcheck}{\widecheck{\nabla}_{\hspace{-1 mm}\Cg}}
\newcommand{\delCgn}{\nabla_{\hspace{-1 mm}\Cgn}}
\newcommand{\delCgncheck}{\widecheck{\nabla}_{\hspace{-1 mm}\Cgn}}
\newcommand{\delCgone}{\nabla_{\hspace{-1 mm}\Cgone}}
\newcommand{\delCgtwo}{\nabla_{\hspace{-1 mm}\Cgtwo}}
\newcommand{\delMg}{\nabla_{\hspace{-1 mm}\Mg}}
\newcommand{\delMgn}{\nabla_{\hspace{-1 mm}\Mgn}}
\newcommand{\E}{\mathcal{E}}
\newcommand{\End}{\textup{End}}
\newcommand{\F}{\mathcal{F}}
\newcommand{\I}{\mathcal{I}}
\newcommand{\Id}{\textup{Id}}
\newcommand{\im}{\textup{i}}
\newcommand{\Ip}{\I_{+}}
\newcommand{\J}{\mathcal{J}}
\newcommand{\Mg}{{\mathcal M}_{g}}
\newcommand{\Mgn}{{\mathcal M}_{g,n}}
\newcommand{\Omo}{\Omega_{0}}
\newcommand{\Sg}{\mathcal{S}_{g}}
\newcommand{\Sgn}{\mathcal{S}_{g,n}}
\newcommand{\Szero}{\mathcal{S}_{0}}
\newcommand{\SL}{\textup{SL}}
\newcommand{\slLie}{\mathfrak{sl}}
\newcommand{\Sym}{\textup{Sym}}
\newcommand{\tpi}{2\pi \im} 
\newcommand{\Tr}{\textup{tr}}
\newcommand{\vac}{\mathbbm{1}}
\newcommand{\wt}{\textup{wt}}
\newcommand{\Z}{\mathbb{Z}}
\newcommand{\Zhu}{\textup{Zhu}}
\newtheorem{corollary}{Corollary}[section]
\newtheorem{example}{Example}[section]
\newtheorem{lemma}{Lemma}[section]
\newtheorem{proposition}{Proposition}[section]
\newtheorem{remark}{Remark}[section]
\newtheorem{theorem}{Theorem}[section]
\title{General Genus Zhu Recursion for Vertex Operator Algebras\\
}
\date{}
\begin{document}
	\author{Michael P. Tuite}
	\address{School of Mathematics, Statistics and Applied Mathematics\\ 
		National University of Ireland Galway, Galway, Ireland}	
	\email{michael.tuite@nuigalway.ie}
	\author[Michael Welby]{Michael Welby$^{\dagger}$}
	\thanks{$^{\dagger}$Supported by an Irish Research Council Government of Ireland Postgraduate Scholarship.}
	\address{School of Mathematics, Statistics and Applied Mathematics\\ 
		National University of Ireland Galway, Galway, Ireland}
\email{m.welby5@nuigalway.ie}

\begin{abstract}
	We describe Zhu recursion  for a vertex operator algebra
	(VOA) on a general genus Riemann surface in the Schottky uniformization
	where  $n$-point correlation functions are written as linear combinations of 
	$(n-1)$-point functions with universal coefficients. These coefficients are identified with  specific geometric structures on the Riemann surface.
	We  apply Zhu recursion to the Heisenberg VOA and determine all its correlation functions. 
	For a general VOA, Zhu recursion with respect to the Virasoro vector is shown to lead to conformal Ward identities expressed in terms of derivatives with respect to the surface moduli. 
	We derive linear partial differential equations for the Heisenberg VOA partition function and various 
	structures such as the  bidifferential of the second kind, holomorphic 1-forms and the period matrix. 
	We also compute the genus $g$ partition function for an even lattice VOA.
\end{abstract}
\maketitle
\section{Introduction}
Conformal field theories on Riemann surfaces have been studied by physicists for over 30 years e.g. \cite{AGMV, EO}. The purpose of this paper is to make some of these ideas explicitly realizable within Vertex Operator Algebra  (VOA) theory  e.g.~\cite{K,LL,MT1}. This work follows earlier results for genus two \cite{MT2,MT3,GT} and higher \cite{TZ,T2}.
In his seminal paper~\cite{Z}, Zhu describes a recursion formula for genus one $n$-point correlation functions for a VOA  in terms of a finite linear combination of $(n-1)$-point functions with universal coefficients given by Weierstrass elliptic functions.
Zhu recursion provides a powerful calculational tool which, for example, was used by Zhu to prove  convergence and modularity of the simple module partition functions for rational $C_{2}$ cofinite 
VOAs~\cite{Z}. Zhu recursion was recently extended to correlation functions on a genus two surface formed by sewing two tori~\cite{GT}. In this work we describe Zhu recursion for VOA correlation functions on a general genus  Riemann surface  in the Schottky uniformization. 
We obtain higher genus universal coefficients analogous to Weierstrass functions which are described by explicit geometric objects e.g. we prove that 1-point functions are determined by holomorphic differentials. We consider applications to the Heisenberg VOA and the development of conformal Ward identities expressed in terms of explicit  differential operators giving the variation with respect to the Riemann surface moduli.   
 
%We apply Zhu recursion to the Heisenberg VOA and describe all of  its $n$-point functions. Zhu reduction with respect to the Virasoro vector  leads to various conformal Ward identities. 
%These are  of partition and $n$-point functions with respect to the surface moduli. 
%We describe several applications such as linear  partial differential equations for the Heisenberg partition function, the classical bidifferential of the second kind, holomorphic 1-forms and the period matrix. %(equivalent to Rauch's formula~\cite{R}). 
%We also determine the genus $g$ partition function for a lattice VOA in terms of the lattice Siegel theta function and the Heisenberg VOA partition function generalising previous  results at genus one and two.   

In Section 2  we briefly review  the classical Schottky  uniformization of  a genus $g$ Riemann surface  $\mathcal{S}_{g}$~\cite{Fo,FK, Bo}. In particular, we use non-standard Schottky parameters suitable for our later VOA calculations.

Section 3 begins with some standard VOA theory e.g.~\cite{K,LL,MT1}. 
We develop genus zero Zhu recursion theory for an $n$-point function involving a conformal weight $N$ quasiprimary vector which is expressed as a sum of $(n-1)$-point functions with VOA independent coefficients which are not unique due to an $(2N-1)$-fold correlation function symmetry.

%We also discuss the existence of a unique invertible invariant bilinear form  for VOAs of CFT-type that we refer to as the Li-Zamolodchikov (Li-Z) metric~.
Section 4 is concerned with Zhu recursion on $\mathcal{S}_{g}$. We define the genus $g$ partition and $n$-point functions for simple self-dual VOAs  of strong CFT-type (with a unique invertible invariant bilinear form~\cite{FHL,Li}) in terms of infinite sums of certain genus zero correlation functions (that depend on Schottky parameters and Riemann surface points). We prove that these definitions agree  with Zhu's trace functions at genus one~\cite{Z}. We describe the M\"obius $\SL_{2}(\C)$ covariance properties of genus $g$ correlation functions.  
Our main result (Theorem~\ref{theor:ZhuGenusg})  describes the genus $g$ Zhu recursion formula. The proof relies partly on  genus zero Zhu recursion but  also on other recursion properties following from the invariance of the   bilinear form. We find that the coefficients in our Zhu recursion formula contain universal terms, $\Psi_{N}(x,y)$ and $\Theta_{a}(x,\ell)$, expressed as certain formal series. $\Psi_{N}(x,y)$ is the genus $g$ analogue of the leading Weierstrass term in the original genus one Zhu  recursion formula.  

In Section~5  we identify the universal coefficients $\Psi_{N}(x,y)$ and $\Theta_{a}(x,\ell)$ for genus $g\ge 2$.  In particular,  $\Psi_{N}(x,y)$ is shown to be a convergent Poincar\'{e} sum over the genus $g$ Schottky group with summand chosen by exploiting the $(2N-1)$-fold symmetry for genus zero correlation functions. 
For $N=1$ we find that $\Psi_{1}(x,y)$ is the classical differential of the third kind~\cite{Fa,Bo} with a simple pole  at $x=y$. 
For $N\ge 2$ we find $\Psi_{N}(x,y)$ is meromorphic with a simple pole at $x=y$. It is an $N$-differential in $x$ and a $(1-N)$-quasidifferential in $y$ with cocycles as introduced by Bers \cite{Be1} and is referred to as a GEM form (Green's function with Extended Meromorphicity)  in~\cite{T1}. 
The other universal coefficients $\Theta_{a}(x,\ell)$ (where $a=1,\ldots,g$ and $\ell=1,\ldots, 2N-2$) are related to $\Psi_{N}(x,y)$  and define a spanning set of holomorphic $N$-differentials on  $\mathcal{S}_{g}$. In particular, these describe the space of 1-point functions for quasiprimary vectors of weight $N$. 
An alternative canonical choice for the GEM form $\Psi_{N}^{\Can}(x,y)$ is also discussed where the coefficients $\Theta_{a}(x,\ell)$ are replaced by a $(g-1)(2N-1)$-dimensional canonically normalized basis of holomorphic $N$-differentials. 

Section 6 discusses applications of genus $g$ Zhu recursion. 
We determine all $n$-point functions for the Heisenberg VOA in terms of the classical bidifferential of the second kind extending previous results at genus two~\cite{MT2, MT3,GT}. 
Zhu recursion with respect to the Virasoro vector for a general VOA leads to various conformal Ward identities. 
We find that the Virasoro $1$-point function is given by the action of a canonical differential operator on the partition function describing its variation with respect to the surface moduli with coefficients given by holomorphic 2-differentials generalizing genus two results of~\cite{GT}. 
The deeper relationship of this differential operator to Beltrami differentials \cite{A} is discussed  in \cite{T1}.  
We derive  linear partial differential equations for the Heisenberg  VOA partition function and some standard objects~\cite{Fa,Mu,Bo} on $\mathcal{S}_{g}$ such as the  bidifferential of the second kind, the projective connection, holomorphic 1-differentials and the period matrix - the latter case being a reformulation of Rauch's identity~\cite{R,O}. 
We conclude with an expression for the partition function for an even lattice VOA in terms of the Siegel lattice theta function and  the Heisenberg partition function, which generalises results of~\cite{MT2, MT3}. 

\noindent \textbf{Acknowledgements.} 
	We thank Tom Gilroy, Geoff Mason and Hiroshi Yamauchi for a number of very helpful comments and suggestions.

\section{The Schottky Uniformization of Riemann Surfaces}
\label{sec:Genus g}
\subsection{The Schottky uniformization of a Riemann surface}
\label{sec:Schottky}
Consider a compact marked  Riemann surface $\Sg$ of genus $g$, e.g.~\cite{FK,Mu,Fa,Bo},  
with canonical homology basis $\alpha_{a}$, $\beta_{a}$ for $a\in\Ip=\{1,2,\ldots,g\}$. 
We review the  construction of a genus $g$ Riemann surface $\Sg$ using the Schottky uniformization where we sew $g$ handles to the Riemann sphere $\Szero\cong\Chat:=\C\cup \{\infty\}$ e.g.~\cite{Fo, Bo}. Every Riemann surface can be (non-uniquely) Schottky uniformized~\cite{Be2}.

For  $a\in\I=\{\pm 1,\pm 2,\ldots,\pm g\}$  let $\calC_{a}\subset \Szero$ be $2g$ non-intersecting Jordan curves where $z\in \calC_{a}$, $z'\in \calC_{-a}$ for $a\in\Ip$ are identified by a sewing relation
\begin{align}\label{eq:SchottkySewing}
\frac{z'-W_{-a}}{z'-W_{a}}\cdot\frac{z-W_{a}}{z-W_{-a}}=q_{a},\quad a\in\Ip,
\end{align}
for $q_{a}$ with $0<|q_{a}|<1$ and $W_{\pm a}\in\Chat$. 
Thus  $z'=\gamma_{a}z$  for $a\in\Ip$ with 
\begin{align}\label{eq:gammaa}
\gamma_{a}:=\sigma_{a}^{-1}
\begin{pmatrix}
q_{a}^{1/2} &0\\
0 &q_{a}^{-1/2}
\end{pmatrix}
\sigma_{a},\quad 
\sigma_{a}:=(W_{-a}-W_{a})^{-1/2}\begin{pmatrix}
1 & -W_{-a}\\
1 & -W_{a}
\end{pmatrix}.
\end{align}
$\sigma_{a}(W_{-a})=0$ and $\sigma_{a}(W_{a})=\infty$ are, respectively, attractive and repelling fixed points of $Z\rightarrow Z'=q_{a}Z$ for  $Z=\sigma_{a} z$ and $Z'=\sigma_{a} z'$.
$W_{-a}$ and  $W_{a}$  are the corresponding fixed points for $\gamma_{a}$. We identify the standard homology cycles $\alpha_{a}$  with $\calC_{-a}$ and  $\beta_{a}$ with a path connecting  $z\in  \calC_{a}$ to $z'=\gamma_{a}z\in  \calC_{-a}$.

The genus $g$ Schottky group $\Gamma$  is   the free group with generators $\gamma_{a}$.
Define $\gamma_{-a}:=\gamma_{a}^{-1}$. The independent elements of $\Gamma$ are reduced words of length $k$
% in the alphabet $\{\gamma_{a}|a\in\Ip  \}$ 
 of the form $\gamma=\gamma_{a_{1}}\ldots \gamma_{a_{k}}$ where $a_{i}\neq -a_{i+1}$ for each $i=1,\ldots ,k-1$. 
 
 Let $\Lambda(\Gamma)$ denote the  limit set  of $\Gamma$ i.e. the set of limit points of the action of $\Gamma$ on $\Chat$. Then $\Sg\simeq\Omo/\Gamma$ where $\Omo:=\Chat-\Lambda(\Gamma)$.
  We let $\D\subset\Chat$  denote the standard connected fundamental region with oriented boundary curves $\calC_{a}$. 

Define $w_{a}:=\gamma_{-a}.\infty$. Using~\eqref{eq:SchottkySewing} we find 
\begin{align}
\label{eq:wa}
w_{a}=\frac{W_{a}-q_{a}W_{-a}}{1-q_{a}},\quad a\in\I ,
\end{align}
where we define $q_{-a}:=q_{a}$.
Then \eqref{eq:SchottkySewing} is equivalent to
\begin{align}\label{eq:SchottkySewing2}
(z'-w_{-a})(z-w_{a})=\rho_{a},
\end{align}
with 
\begin{align}
\label{eq:rhoa}
\rho_{\pm a}:=-\frac{q_{a}(W_{a}-W_{-a})^{2}}{(1-q_{a})^{2}}.
\end{align}
\eqref{eq:SchottkySewing2} implies
\begin{align}
\label{eq:gamma_a.z}
\gamma_{a}z=w_{-a}+\frac{\rho_{a}}{z-w_{a}}.
\end{align}
 Let $\Delta_{a}$ be the disc with centre $w_{a}$ and radius $|\rho_{a}|^{\frac{1}{2}}$. 
It is convenient to choose the Jordan curve $\calC_{a}$ to be the  boundary of $\Delta_{a}$. Then 
$\gamma_{a}$ maps the exterior (interior) of $\Delta_{a}$  to the interior (exterior) of $\Delta_{-a}$ since
 \begin{align*}
 |\gamma_{a}z-w_{-a}||z-w_{a}|=|\rho_{a}|.
 \end{align*}
 Furthermore,  the discs $\Delta_{a},\Delta_{b}$ are non-intersecting if and only if
 \begin{align}
 \label{eq:JordanIneq}
 |w_{a}-w_{b}|>|\rho_{a}|^{\frac{1}{2}}+|\rho_{b}|^{\frac{1}{2}},\quad \forall \;a\neq b.
 \end{align} 
We define $\mathfrak{C}_{g}$ to be the set 
$\{ (w_{a},w_{-a},\rho_{a})|a\in\Ip\}\subset \C^{3g}$ satisfying \eqref{eq:JordanIneq}. We refer to $\mathfrak{C}_{g}$ as the Schottky parameter space.

The relation \eqref{eq:SchottkySewing} is M\"obius  invariant for $\gamma =\left(\begin{smallmatrix}A&B\\C&D\end{smallmatrix}\right)\in\SL_{2}(\C)$ with $(z,z',W_{a},q_{a})\rightarrow(\gamma z,\gamma z',\gamma W_{a},q_{a})$ giving
an $\SL_{2}(\C)$ action on $\mathfrak{C}_{g}$ as follows
\begin{align}
\label{eq:Mobwrhoa}
\gamma:(w_{a},\rho_{a})\mapsto & 
\left(	\frac { \left( Aw_{a}+B \right)  \left( Cw_{-a}+D \right) -\rho_{a}
	\,AC}{ \left( Cw_{a}+D \right)  \left( Cw_{-a}+D \right) -\rho_{a}\,{
		C}^{2}},
{\frac {\rho_{a}}{ \left(  \left( Cw_{a}+D \right)  \left( Cw_{-a}+D
		\right) -\rho_{a}\,{C}^{2} \right) ^{2}}}\right).
\end{align}
%We note that
%\begin{align}
%\label{eq:chia}
%\chi_{a}=-\frac{\rho_a}{\left(w_{a}-w_{-a}\right)^2},
%\end{align}
%is M\"obius invariant with $q_{a}=c(\chi_{a})$ for Catalan series $c(\chi)=\sum_{n\ge 1}\frac{1}{n}\binom{2n}{n+1}\chi^{n}$~\cite{MT3}.
We define Schottky space as $\mathfrak{S}_{g}:=\mathfrak{C}_{g}/\SL_{2}(\C)$ which provides a natural covering space for the moduli space of genus $g$ Riemann surfaces (of dimension 1 for $g=1$ and $3g-3$ for $g\ge 2$).  Despite the more complicated M\"obius action,  
we exploit the $\mathfrak{C}_{g}$  parameterization in this work because the sewing relation  \eqref{eq:SchottkySewing2} is more  readily implemented in the theory of vertex operators than the traditional relation  \eqref{eq:SchottkySewing}. 

\section{Vertex Operator Algebras and Genus Zero Zhu Recursion}
\label{sec:VOAZhu0}
\subsection{Vertex Operator Algebras} 
We  review aspects of vertex operator algebras e.g.~\cite{K,FHL,LL,MT1}. A Vertex Operator Algebra (VOA) is a quadruple $(V,Y(\cdot,\cdot),\vac,\omega)$ consisting of a graded vector space $V=\bigoplus_{n\ge 0}V_{n}$  with two distinguished elements: the vacuum vector $\vac\in V_{0}$  and the Virasoro conformal vector $\omega\in V_{2}$.  
For each $u\in V$ there exists a vertex operator, a  Laurent series in a formal variable $z$ given by   
\begin{align*}
Y(u,z)=\sum_{n\in\Z}u(n)z^{-n-1},
\end{align*}
for \emph{modes} $u(n)\in\End(V)$ 
where  $u=u(-1)\vac $ and $u(n)\vac =0\;\forall\; n\ge 0$.
Vertex operators are local: for each  $u,v\in V$  there exists an integer $N\ge 0$ such that
\begin{align*}
(x-y)^{N}[Y(u,x),Y(v,y)]=0.
\end{align*}
For the Virasoro conformal vector  $\omega$
\begin{align*}
Y(\omega,z)&=\sum_{n\in\Z}L(n)z^{-n-2},
\end{align*}
where the operators  $L(n)=\omega(n+1)$  satisfy the Virasoro  algebra
\begin{align*}
[L(m),L(n)]=(m-n)L(m+n)+\frac{m^{3}-m}{12}\delta_{m,-n}C,
\end{align*}
for a constant \emph{central charge} $C$.   Vertex operators  satisfy translation
\begin{align*}
Y(L(-1)u,z)=\del_{z}Y(u,z).
\end{align*}
$V_{n}=\{v\in V:L(0)v=nv\}$ with $\dim V_{n}<\infty$. 
$v\in V_{n}$ is said to have \emph{(conformal) weight} $\wt(v)=n$. 

\medskip
We quote a number of basic VOA properties e.g.~\cite{K,FHL,LL,MT1}. For $v\in V_{k}$ 
\begin{align}
\label{eq:vnVm}
v(n):V_{m}\rightarrow V_{m+k-n-1}.
\end{align}
The commutator identity: for all $u,v\in V$ we have 
\begin{align}\label{CommutatorId1}
[u(m),Y(v,z)]&=\sum_{j\ge 0}\binom{m}{j}Y(u(j)v,z)z^{m-j} 
\\
\label{CommutatorId2}
&=\left (\sum_{j\ge 0}Y(u(j)v,z)\partial_{z} ^{(j)}\right)z^{m},
\end{align} 
where $\del_{z}^{(j)}:=\tfrac{1}{j!}\del_{z}^{j}$. The expression \eqref{CommutatorId2} is particularly useful for our later purposes. 
\\
The associativity identity: for each $u,v\in V$  there exists $M\ge 0$ such that
\begin{align}
(x+y)^{M}Y(Y(u,x)v,y)&=(x+y)^{M}Y(u,x+y)Y(v,y).
\label{AssocId}
\end{align}

There is a M\"obius  action for $\gamma=\left(\begin{smallmatrix}a&b\\c&d\end{smallmatrix}\right)\in\SL_{2}(\C)$  on vertex operators ~\cite{DGM,K} 
\begin{align}\label{eq:MobV}
\gamma:Y(u,z)\rightarrow D_{\gamma}  Y(u,z) D_{\gamma}^{-1} =Y\left(e^{-c(cz+d)L(1)}\left(cz+d\right)^{-2L(0)}u,\frac{az+b}{cz+d}\right),
\end{align}
where $
D_{\gamma}=e^{\frac{b}{d}L(-1)}d^{-2L(0)} e^{-\frac{c}{d}L(1)}
$
with $D_{\gamma}\vac=0$. 
In particular, associated with the M\"obius map $z\rightarrow \frac{\rho}{z}$, for  a given scalar $\rho$,
we define an adjoint vertex operator ~\cite{FHL, Li}
\begin{align}\label{eq:Yadj}
Y_{\rho}^{\dagger}(u,z):=\sum_{n\in\Z}u_{\rho}^{\dagger}(n)z^{-n-1}=Y\left(e^{\frac{z}{\rho}L(1)}\left(-\frac{\rho}{z^{2}}\right)^{L(0)}u,\frac{\rho}{z}\right).
\end{align}
For quasiprimary $u$ (i.e. $L(1)u=0$) of weight $N$ we have
\begin{align}
\label{eq:udagger}
u_{\rho}^{\dagger}(n)=(-1)^{N}\rho^{n+1-N}u(2N-2-n),
\end{align}
e.g. $L_{\rho}^{\dagger}(n)=\rho^{n}L(-n)$.
A bilinear form $\langle \cdot,\cdot\rangle_{\rho}$ on $V$  is said to be invariant if
\begin{align}
\label{eq:LiZinvar}
\langle Y (u, z)v, w\rangle_{\rho} = \langle v, Y_{\rho}^{\dagger}
(u, z)w\rangle_{\rho},\quad \forall\;u,v,w\in V.
\end{align}
$\langle \cdot,\cdot\rangle_{\rho}$ is symmetric where $\langle u,v\rangle_{\rho}=0$ for $\wt(u)\neq\wt(v)$~\cite{FHL} and 
\begin{align}
\label{eq:uvA}
\langle u,v\rangle_{\rho}=\rho^{N}\langle u,v\rangle_{1},
\quad N=\wt(u)=\wt(v).
\end{align} 
% <u(-1)\vac,v>_{\rho}=(-1)^N \rho^N<\vac,u(2N-1)v>_{\rho}
% =(-1)^N \rho^N<\vac,u(2N-1)v>_{1}
We assume throughout that $V$ is of strong CFT-type i.e. $V_{0} = \C\vac$ and $L(1)V_{1}= 0$. Then the bilinear form with normalisation $\langle \vac,\vac\rangle_{\rho}=1$ is unique~\cite{Li}. We also assume that $V$ is simple and  self-dual  ($V$ is isomorphic to the contragredient module as a $V$-module).
Then the bilinear form is non-degenerate~\cite{Li}. We refer to this unique invariant non-degenerate bilinear form as the Li-Zamolodchikov (Li-Z) metric.

\subsection{Genus Zero Zhu Recursion}
We develop genus zero Zhu theory in more generality than in ref.~\cite{Z}. In particular, we describe a $(2N-1)$-fold symmetry for genus zero $n$ point functions associated with a quasiprimary vector of weight $N$. This is exploited in the later genus $g$ analysis.

Define the genus zero $n$-point (correlation) function for $v_{1},v_{2},\ldots, v_{n}\in V$ for formal $y_{1},\ldots,y_{n}$ by
\begin{align*}
Z^{(0)}(\bm{v,y}):=Z^{(0)}(\ldots;v_{k},y_{k};\ldots)=\langle \vac,\bm{Y(v,y)}\vac\rangle,
\end{align*}
where  $\langle\cdot,\cdot\rangle$ is the Li-Z metric with $\rho=1$ and 
\begin{align*}
\bm{Y(v,y)}:= Y(v_{1},y_{1}) \ldots Y (v_{n},y_{n}).
\end{align*}
$Z^{(0)}(\bm{v,y})$ depends only on the $n$ given vertex operators and is a rational function of $y_{1},\ldots,y_{n}$ which can thus  be interpreted as points  on $\Chat$ (cf. Remark~\ref{rem:flzero}).
We also define the $n$-point correlation differential form
\begin{align}\label{eq:BFForm}
\F^{(0)}(\bm{v,y}):=Z^{(0)}(\bm{v,y})\bm{dy^{\wt(v)}},
\end{align}
where $\bm{dy^{\wt(v)}}:=\prod_{k=1}^{n}dy_{k}^{\wt(v_{k})}$  for homogeneous $v_{k}$  extended by linearity for general $v_{k}$. 

Let  $u$ be quasiprimary of weight $N$. Using \eqref{eq:udagger} we find
\begin{align*}
u(\ell)\vac=
u^{\dagger}(\ell)\vac=0,
\end{align*}
for all $0\le \ell \le 2N-2$ so that
\begin{align*}
0=\langle u^{\dagger}(\ell)\vac,\bm{Y(v,y)}\vac\rangle=
\langle \vac,u(\ell)\bm{Y(v,y)}\vac\rangle=
\langle \vac,\left[u(\ell),\bm{Y(v,y)}\right]\vac\rangle.
\end{align*}
Apply the commutator identity~\eqref{CommutatorId2} this  implies that for $0\le \ell \le 2N-2$
\begin{align}
\label{ZhuPrepLemma}
\sum_{k=1}^n \left(\sum_{j\ge 0}
Z^{(0)}(\ldots;u(j)v_{k},y_{k};\ldots)\partial_{k} ^{(j)} \right) y_{k}^{\ell}=0 ,
\end{align}
where $\del_{k}^{(j)}:=\tfrac{1}{j!}\del_{y_{k}}^{j}$. 
Thus we find a genus zero analogue of~\cite{Z}, Proposition 4.3.1.
\begin{lemma}\label{lem:ZeroSum}
	Let  $u$ be quasiprimary of weight $N$. Then 
\begin{align*}
\sum_{k=1}^n \left(\sum_{j\ge 0}
Z^{(0)}(\ldots;u(j)v_{k},y_{k};\ldots)\partial_{k} ^{(j)} \right)p(y_{k})
= 0,
\end{align*}
for any $p(y)\in\Pi_{2N-2}(y)$, the space of degree $2N-2$ polynomials in $y$.
\end{lemma}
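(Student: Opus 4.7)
The plan is to read off the lemma as a direct linearity consequence of the preparatory identity \eqref{ZhuPrepLemma} that has already been established just before the statement. Observe that the map
\[
p \longmapsto \sum_{k=1}^n \left(\sum_{j\ge 0} Z^{(0)}(\ldots;u(j)v_{k},y_{k};\ldots)\,\partial_{k}^{(j)} \right) p(y_{k})
\]
is $\C$-linear in $p$, since for each fixed $k$ and $j$ the differential operator $\partial_k^{(j)}$ acts linearly on polynomials in $y_k$, and the prefactor $Z^{(0)}(\ldots;u(j)v_{k},y_{k};\ldots)$ does not depend on $p$. Moreover, for fixed $k$ the inner sum over $j$ is finite when applied to any polynomial, since $\partial_k^{(j)} p(y_k)=0$ for $j>\deg p$, so no convergence issue arises; and the outer sum is finite because $n$ is finite.

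Next, I would invoke \eqref{ZhuPrepLemma}, which asserts exactly that the above map vanishes on each monomial $p(y)=y^{\ell}$ for $\ell=0,1,\ldots,2N-2$. Since these monomials form a $\C$-basis of $\Pi_{2N-2}(y)$, linearity implies that the map vanishes on all of $\Pi_{2N-2}(y)$, which is the lemma.

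Since the author has already carried out the substantive work (using quasiprimarity of $u$ to get $u(\ell)\vac=u^{\dagger}(\ell)\vac=0$ for $0\le\ell\le 2N-2$, invariance of the Li-Z metric to get $\langle\vac,[u(\ell),\bm{Y(v,y)}]\vac\rangle=0$, and the commutator identity \eqref{CommutatorId2} to convert this into \eqref{ZhuPrepLemma}), there is no remaining obstacle; the lemma is simply the polynomial packaging of \eqref{ZhuPrepLemma}. The only minor point worth remarking on in writing is that although the statement involves the formal expression $\sum_{j\ge 0}$, for polynomial $p$ only finitely many terms contribute, so the identity is genuinely one among rational functions of $y_1,\ldots,y_n$ with no convergence subtleties.
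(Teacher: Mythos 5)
Your proof is correct and follows exactly the paper's route: the paper derives \eqref{ZhuPrepLemma} for the monomials $y^{\ell}$, $0\le\ell\le 2N-2$, and states the lemma as the immediate linear extension to all of $\Pi_{2N-2}(y)$. Your additional remark that only finitely many $j$ contribute for polynomial $p$ is a worthwhile clarification but does not change the argument.
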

\begin{remark}
For the conformal vector $u=\omega$ with $N=2$ then Lemma~\ref{lem:ZeroSum} is the genus zero conformal Ward identity.
\end{remark}	
We now develop a genus zero Zhu recursion formula for $Z^{(0)}(u,x;\bm{v,y})$ for  quasiprimary $u$ of weight $N$. First note that for all $v\in V$ and $s<0$
\begin{align*}
0=\langle u(-s-1)\vac,v\rangle=
\langle \vac,u^{\dagger}(-s-1)v\rangle=(-1)^{N}\langle\vac,u(s+2N-1)v\rangle.
\end{align*}
Hence we find using commutativity \eqref{CommutatorId2} that 
\begin{align*}
Z^{(0)}(u,x;\bm{v,y})
&=\sum_{s\ge 0}x^{-s-2N}\left\langle\vac,u(s+2N-1)\bm{Y(v,y)}\vac\right\rangle
\\
&=\sum_{k=1}^{n}\sum_{s\ge 0}x^{-s-2N}
\left\langle\vac,
\ldots\sum_{j\ge 0}\del_{k}^{(j)}\left(y_{k}^{s+2N-1}\right)Y(u(j)v_{k},y_{k})\ldots \vac\right\rangle
\\
&=\sum_{k=1}^{n}\sum_{j\ge 0}x^{-1}
\del_{k}^{(j)}\left(\sum_{s\ge 0}\left(\frac{y_{k}}{x}\right)^{s+2N-1}
\right)
Z^{(0)}(\ldots;u(j)v_{k},y_{k};\ldots)
\\
&=\sum_{k=1}^{n}\sum_{j\ge 0}\del^{(0,j)}\zeta_{N}(x,y_{k})Z^{(0)}(\ldots;u(j)v_{k},y_{k};\ldots),
\end{align*}
where
\begin{align*}
\zeta_{N}(x,y):=x^{-1}\sum_{s\ge 0}\left(\frac{y}{x}\right)^{s+2N-1}
=\left(\frac{y}{x}\right)^{2N-1}\cdot\frac{1}{x-y} =\frac{1}{x-y}-\sum_{\ell=0}^{2N-2}\frac{y^{\ell}}{x^{\ell+1}},
\end{align*}
and where $\del^{(i,j)}f(x,y):=\del_{x}^{(i)}\del_{y}^{(j)}f(x,y)$ for a function $f(x,y)$. Here and below we use the formal binomial expansion convention that 
\begin{align}
\label{eq:binexp}
(x+y)^{m}=\sum_{k\ge 0}\binom{m}{k}x^{m-k}y^{k},
\end{align} 
for $m\in\Z$ and formal $x,y$.  
Lemma~\ref{lem:ZeroSum} implies that we may replace $\zeta_{N}(x,y)$ by
\begin{align}\label{PsiDef}
\psi_{N}^{(0)}(x,y):=\frac{1}{x-y}+\sum_{\ell=0}^{2N-2}f_{\ell}(x)y^{\ell},
\end{align}
for \emph{any} Laurent series $f_{\ell}(x)$ for $\ell=0,\ldots ,2N-2$. Thus we find
\begin{theorem}[Quasiprimary Genus Zero Zhu Recursion]
	\label{theor:GenusZeroZhu}
Let $u$ be quasiprimary of weight $N$. Then
\begin{align}\label{GenusZeroZhu}
Z^{(0)}(u,x;\bm{v,y})=\sum_{k=1}^{n}\sum_{j\ge 0}\del^{(0,j)}\psi_{N}^{(0)}(x,y_{k})Z^{(0)}(\ldots;u(j)v_{k},y_{k};\ldots).
\end{align}
\end{theorem}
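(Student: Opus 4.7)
The plan is to compute $Z^{(0)}(u,x;\bm{v,y})=\langle\vac,Y(u,x)\bm{Y(v,y)}\vac\rangle$ directly by expanding $Y(u,x)=\sum_{n\in\Z}u(n)x^{-n-1}$ as a formal Laurent series in $x$ and identifying which modes actually contribute. Two ingredients will do essentially all the work: the quasiprimary adjoint formula~\eqref{eq:udagger} (which, together with $u(k)\vac=0$ for $k\ge 0$, forces the vanishing of all pairings $\langle\vac,u(n)w\rangle$ with $n\le 2N-2$), and the compact commutator identity~\eqref{CommutatorId2} (to convert the surviving action of $u$ through the product of vertex operators into a sum of derivatives acting on the insertion points).

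Concretely, I would first verify that for $s<0$,
\[
0=\langle u(-s-1)\vac,w\rangle=\langle\vac,u^\dagger(-s-1)w\rangle=(-1)^N\langle\vac,u(s+2N-1)w\rangle,
\]
using invariance~\eqref{eq:LiZinvar} and~\eqref{eq:udagger}, so only modes $u(s+2N-1)$ with $s\ge 0$ survive in the expansion of $Z^{(0)}(u,x;\bm{v,y})$. For such $s\ge 0$ one has $u(s+2N-1)\vac=0$, so $u(s+2N-1)\bm{Y(v,y)}\vac=[u(s+2N-1),\bm{Y(v,y)}]\vac$; I then push $u(s+2N-1)$ through the product one factor at a time using~\eqref{CommutatorId2}, which at step $k$ produces a finite sum $\sum_{j\ge 0}Y(u(j)v_k,y_k)\del_k^{(j)}y_k^{s+2N-1}$ while the residual $u(s+2N-1)$ propagates onward and eventually kills $\vac$. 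Summing the resulting geometric series in $s$ assembles the kernel $\zeta_N(x,y_k)=(y_k/x)^{2N-1}/(x-y_k)$ together with its $y_k$-derivatives, yielding the preliminary recursion with $\zeta_N$ in place of $\psi_N^{(0)}$.

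Finally, to upgrade $\zeta_N$ to the general $\psi_N^{(0)}$ I would invoke Lemma~\ref{lem:ZeroSum}. For each fixed $x$ the difference $\psi_N^{(0)}(x,y_k)-\zeta_N(x,y_k)$ is a polynomial of degree at most $2N-2$ in $y_k$, since both kernels share the singular part $1/(x-y_k)$. Applying the lemma coefficient-by-coefficient with $x$ treated as a parameter shows the correction vanishes, which is precisely the $(2N-1)$-fold ambiguity highlighted at the start of the subsection. The main bookkeeping obstacle is the commutator step: one must check that iteratively moving $u(s+2N-1)$ through $n$ vertex operators and then summing over $s$ is valid as formal power series manipulation. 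Convergence is not an issue because each $y_k$-dependence appearing is the single monomial $y_k^{s+2N-1}$, so the derivatives $\del_k^{(j)}$ commute with the $s$-sum trivially, and only a formal geometric expansion in $y_k/x$ is being performed.
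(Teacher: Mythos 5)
Your proposal is correct and follows essentially the same route as the paper: the vanishing of $\langle\vac,u(n)w\rangle$ for $n\le 2N-2$ via invariance and \eqref{eq:udagger}, the commutator identity \eqref{CommutatorId2} to convert the surviving modes into derivatives of $y_k^{s+2N-1}$ at each insertion, summation of the geometric series to produce $\zeta_N(x,y_k)$, and Lemma~\ref{lem:ZeroSum} applied to the degree-$(2N-2)$ polynomial difference $\psi_N^{(0)}-\zeta_N$ to justify the general kernel. No gaps.
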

Since $Y\left(\frac{1}{i!}L(-1)^{i}u,x\right)=\del_{x}^{(i)}Y(u,x)$ we  find in general that
\begin{corollary}[General Genus Zero Zhu Recursion]
	\label{cor:GenGenusZeroZhu}
	 Let $\frac{1}{i!}L(-1)^{i}u$ be a  quasiprimary descendant of $u$ with $\wt(u)=N$. Then
\begin{align*}
&\tfrac{1}{i!}Z^{(0)} \left(L(-1)^{i}u,x;\bm{v,y}\right)=\sum_{k=1}^{n}\sum_{j\ge 0}\partial^{(i,j)}\psi_{N}^{(0)}(x,y_{k})Z^{(0)} (\ldots;u(j)v_{k};y_{k};\ldots).
\end{align*}
\end{corollary}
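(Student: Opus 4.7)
The proof should be a short deduction from Theorem~\ref{theor:GenusZeroZhu}, exploiting the translation property of vertex operators. The plan is to move the $L(-1)^{i}$ derivatives from inside the correlator out to an $x$-derivative acting on the whole right-hand side of the quasiprimary Zhu formula already established.

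First I would observe the identity flagged in the sentence preceding the corollary: iterating the translation axiom $Y(L(-1)u,x)=\partial_{x}Y(u,x)$ exactly $i$ times and dividing by $i!$ gives
\begin{align*}
Y\!\left(\tfrac{1}{i!}L(-1)^{i}u,x\right)=\partial_{x}^{(i)}Y(u,x).
\end{align*}
Since the genus zero $n$-point function is multilinear in its vertex operator arguments, this yields
\begin{align*}
\tfrac{1}{i!}Z^{(0)}\!\left(L(-1)^{i}u,x;\bm{v,y}\right)=\partial_{x}^{(i)}Z^{(0)}(u,x;\bm{v,y}).
\end{align*}

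Next, since $u$ is quasiprimary of weight $N$, Theorem~\ref{theor:GenusZeroZhu} applies and expresses $Z^{(0)}(u,x;\bm{v,y})$ as the finite (in $j$, by~\eqref{eq:vnVm}) sum
\begin{align*}
Z^{(0)}(u,x;\bm{v,y})=\sum_{k=1}^{n}\sum_{j\ge 0}\partial^{(0,j)}\psi_{N}^{(0)}(x,y_{k})\,Z^{(0)}(\ldots;u(j)v_{k},y_{k};\ldots).
\end{align*}
I would then apply $\partial_{x}^{(i)}$ to both sides. The $x$-derivatives commute through the finite sums and act only on $\psi_{N}^{(0)}(x,y_{k})$, since the coefficient $Z^{(0)}(\ldots;u(j)v_{k},y_{k};\ldots)$ is independent of $x$. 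Using $\partial_{x}^{(i)}\partial^{(0,j)}=\partial^{(i,j)}$ in the notation $\partial^{(i,j)}f(x,y):=\partial_{x}^{(i)}\partial_{y}^{(j)}f(x,y)$, this gives exactly the claimed formula.

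There is no real obstacle: the translation axiom turns the descendant $\tfrac{1}{i!}L(-1)^{i}u$ into a plain $x$-derivative of $Y(u,x)$, and the quasiprimary recursion from Theorem~\ref{theor:GenusZeroZhu} supplies the rest. The only point worth emphasizing is that although $L(-1)^{i}u$ is itself quasiprimary (of weight $N+i$), we are not invoking the theorem at weight $N+i$ with kernel $\psi_{N+i}^{(0)}$; instead the formula retains the lower-weight kernel $\psi_{N}^{(0)}(x,y_{k})$ with higher $x$-derivatives, because we are deducing the result from the already-established identity for $u$ rather than re-deriving it from scratch for the descendant. The freedom in choosing the Laurent series $f_{\ell}(x)$ in the definition~\eqref{PsiDef} of $\psi_{N}^{(0)}$ is preserved under the $x$-differentiation, so the corollary holds with the same degree of flexibility as Theorem~\ref{theor:GenusZeroZhu}.
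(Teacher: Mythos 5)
Your proof is correct and is exactly the paper's argument: the paper deduces the corollary from Theorem~\ref{theor:GenusZeroZhu} via the single observation $Y\left(\tfrac{1}{i!}L(-1)^{i}u,x\right)=\partial_{x}^{(i)}Y(u,x)$, followed by differentiating the quasiprimary recursion in $x$. One small correction to your closing aside: $L(-1)^{i}u$ is \emph{not} quasiprimary for $i\ge 1$ (indeed $L(1)L(-1)u=2Nu\neq 0$ for $u$ quasiprimary of weight $N>0$), which is precisely why it is called a quasiprimary \emph{descendant} and why the formula must be obtained by differentiation of the identity for $u$ rather than by applying the theorem directly to $L(-1)^{i}u$.
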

\begin{remark}\label{rem:flzero}
Choosing $f_{\ell}(x)=0$ we obtain the neatest  form of Zhu recursion
(which implies that genus zero $n$-point functions are rational) using
\[
\del^{(i,j)}\psi_{N}^{(0)}(x,y)=(-1)^{i}\binom{i+j}{i}\frac{1}{(x-y)^{1+i+j}}.
\] 
However for our later purposes it is more useful to consider more general   $\psi_{N}^{(0)}(x,y)$.  
\end{remark}
Associativity \eqref{AssocId} implies (up to a rational multiplier) that
\begin{align}
Z^{(0)}(u,x+y_{1};\bm{v,y})
&=Z^{(0)}(Y(u,x)v_{1},y_{1};v_{2},y_{2}\ldots ;v_{n},y_{n})
\notag
\\
&=\sum_{m\in\Z}Z^{(0)}(u(m)v_{1},y_{1};\ldots )x^{-m-1}.
\label{eq:Z0assoc}
\end{align}
But Zhu recursion \eqref{GenusZeroZhu}  implies that 
\begin{align*}
Z^{(0)}(u,x+y_{1};\bm{v,y})
&=
\sum_{j\ge 0}\del^{(0,j)}\psi_{N}^{(0)}(y_{1}+x,y_{1})Z^{(0)}(u(j)v_{1},y_{1};\ldots)
\\
&\quad+\sum_{k=2}^{n}\sum_{j\ge 0}\del^{(0,j)}\psi_{N}^{(0)}(y_{1}+x,y_{k})Z^{(0)}(\ldots;u(j)v_{k},y_{k};\ldots).
\end{align*}
Considering the $x$ expansions 
\begin{align*}
\del^{(0,j)}\psi_{N}^{(0)}(y+x,z)&=\sum_{t\ge 0}\del^{(t,j)}\psi_{N}^{(0)}(y,z)x^{t},\quad y\neq z,
\\ 
\partial^{(0,j)}\psi_{N}^{(0)}(y+x,y)&=\frac{1}{x^{1+j}}+\sum_{\ell=0}^{2N-2}\binom{\ell}{j}f_{\ell}(y+x)y^{\ell-j}=\frac{1}{x^{1+j}}+\sum_{t\ge 0}\E^{j}_{t}(y)x^{t},
\end{align*}
where
\begin{align}
\label{eq:Ejt}
\E_{t}^{j}(y)=\sum_{\ell=0}^{2N-2}\del^{(t)}f_{\ell}(y)\;\del^{(j)}y^{\ell},
\end{align}
and comparing the coefficients of $x^{t}$ for $t\ge 0$ we obtain
\begin{theorem}[Quasiprimary Genus Zero Zhu Recursion II]\label{GenusZeroZhu3}
Let $u$ be quasiprimary of weight $N$. Then for all $t\ge 0$ we have
\begin{align*}
Z^{(0)}(u(-t-1)v_{1},y_{1};\ldots ) 
=&\sum_{j\ge 0}\E_{t}^{j}(y_{1})Z^{(0)}(u(j)v_{1},y_{1};\ldots )
\\
&+\sum_{k=2}^{n}\sum_{j\ge 0}\del^{(t,j)}\psi_{N}^{(0)}(y_{1},y_{k})Z^{(0)}(\ldots ;u(j)v_{k},y_{k};\ldots) .
\end{align*}
\end{theorem}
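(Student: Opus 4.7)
The plan is exactly as foreshadowed in the discussion preceding the theorem: combine the associativity expansion with the quasiprimary Zhu recursion of Theorem~\ref{theor:GenusZeroZhu} applied at the shifted insertion point $x+y_1$, and then match coefficients of $x^t$ for $t\ge 0$.

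First, I would use associativity \eqref{AssocId} in the form \eqref{eq:Z0assoc} to write
\begin{align*}
Z^{(0)}(u,x+y_{1};\bm{v,y})=\sum_{m\in\Z}Z^{(0)}(u(m)v_{1},y_{1};\ldots)\,x^{-m-1},
\end{align*}
so that the coefficient of $x^{t}$ (for $t\ge 0$) on the left-hand side equals $Z^{(0)}(u(-t-1)v_{1},y_{1};\ldots)$, which is the desired left-hand side of the theorem. Second, I would apply Theorem~\ref{theor:GenusZeroZhu} directly to $Z^{(0)}(u,x+y_{1};\bm{v,y})$, producing a sum of terms $\partial^{(0,j)}\psi_{N}^{(0)}(y_{1}+x,y_{k})\,Z^{(0)}(\ldots;u(j)v_{k},y_{k};\ldots)$.

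Third, I need to expand each universal coefficient in powers of $x$. For $k\ge 2$ the function $\partial^{(0,j)}\psi_{N}^{(0)}(y_{1}+x,y_{k})$ is regular at $x=0$, and Taylor expansion in the first slot yields $\sum_{t\ge 0}\partial^{(t,j)}\psi_{N}^{(0)}(y_{1},y_{k})\,x^{t}$. For $k=1$ the coefficient has a pole at $x=0$; splitting $\psi_{N}^{(0)}$ according to \eqref{PsiDef}, the pole piece $(y_{1}+x-y_{1})^{-1}$ contributes $x^{-j-1}$ after taking $\partial_{y}^{(j)}$, while the polynomial/Laurent tail, after expanding $f_{\ell}(y_{1}+x)=\sum_{t\ge 0}\partial^{(t)}f_{\ell}(y_{1})x^{t}$ and using $\binom{\ell}{j}y_{1}^{\ell-j}=\partial^{(j)}y_{1}^{\ell}$, gives precisely $\E_{t}^{j}(y_{1})$ as the coefficient of $x^{t}$ for $t\ge 0$, by the definition~\eqref{eq:Ejt}.

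Fourth, I equate coefficients of $x^{t}$ for $t\ge 0$ on both sides. The pole contributions $x^{-j-1}$ from the $k=1$ term match the negative-power part of the associativity expansion (they reproduce the tautology $Z^{(0)}(u(j)v_{1},y_{1};\ldots)=Z^{(0)}(u(j)v_{1},y_{1};\ldots)$ for $j\ge 0$) and so play no role in the $t\ge 0$ comparison. The non-negative-power parts yield exactly the claimed identity. The only step requiring any genuine care is the bookkeeping of the singular $k=1$ term and checking that the formal binomial convention~\eqref{eq:binexp} used to expand $(y_{1}+x)^{m}$ is consistent between the associativity side and the $\psi_{N}^{(0)}$ expansions; this is a routine but slightly fussy verification, and I regard it as the main (minor) obstacle, with all other steps being direct applications of previously established results.
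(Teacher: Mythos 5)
Your proposal is correct and follows the paper's own argument essentially verbatim: the paper likewise combines the associativity expansion \eqref{eq:Z0assoc} with Theorem~\ref{theor:GenusZeroZhu} applied at $x+y_{1}$, expands $\del^{(0,j)}\psi_{N}^{(0)}(y_{1}+x,y_{k})$ in powers of $x$ (splitting off the pole $x^{-1-j}$ in the $k=1$ term and identifying the regular Taylor coefficients with $\E_{t}^{j}(y_{1})$ via \eqref{eq:Ejt}), and compares coefficients of $x^{t}$ for $t\ge 0$. No discrepancies to report.
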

\begin{remark}
	\label{rem:Zhu genus zero II}
Using locality, we can allow the $u(-t-1)$  mode to act on any $v_{k}$ for $k = 1, \ldots, n$ and adjust the right hand side accordingly. A similar recursion formula for a general quasiprimary descendant can be found.
\end{remark}
These results can be expressed in terms of formal differential forms \eqref{eq:BFForm}. Thus with  $\Psi_{N}^{(0)}(x,y)=\psi_{N}^{(0)}(x,y)dx^{N}dy^{1-N}$  then Theorem~\ref{theor:GenusZeroZhu} is equivalent to
\begin{corollary}
	For $u$ quasiprimary of weight $N$ we have
	\begin{align}\label{GenusZeroZhuForms}
	\F^{(0)}(u,x;\bm{v,y})=\sum_{k=1}^{n}\sum_{j\ge 0}\del^{(0,j)}\Psi_{N}^{(0)}(x,y_{k})dy_{k}^{j}\,\F^{(0)}(\ldots;u(j)v_{k},y_{k};\ldots).
	\end{align}
\end{corollary}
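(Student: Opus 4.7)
The plan is to derive this corollary directly from Theorem~\ref{theor:GenusZeroZhu} by a bookkeeping argument: one simply multiplies both sides of the scalar identity \eqref{GenusZeroZhu} by the appropriate product of formal weight differentials and recognises the result as an identity among the correlation differential forms defined in \eqref{eq:BFForm}. No new analytic input is required; the entire content is to verify that the differential weights on each side match.

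First I would multiply the scalar identity \eqref{GenusZeroZhu} through by the formal symbol $dx^{N}\prod_{i}dy_{i}^{\wt(v_{i})}$. The left hand side then becomes $Z^{(0)}(u,x;\bm{v,y})\,dx^{N}\prod_{i}dy_{i}^{\wt(v_{i})}=\F^{(0)}(u,x;\bm{v,y})$ by definition \eqref{eq:BFForm}, since $\wt(u)=N$. For each summand on the right I would invoke \eqref{eq:vnVm} to note that $u(j)v_{k}$ has conformal weight $N+\wt(v_{k})-j-1$, so that by definition \eqref{eq:BFForm}
\begin{align*}
\F^{(0)}(\ldots;u(j)v_{k},y_{k};\ldots)=Z^{(0)}(\ldots;u(j)v_{k},y_{k};\ldots)\,dy_{k}^{N+\wt(v_{k})-j-1}\prod_{i\ne k}dy_{i}^{\wt(v_{i})}.
\end{align*}
Solving this for the scalar correlator and substituting into the multiplied identity, the factor $\prod_{i}dy_{i}^{\wt(v_{i})}$ combines with the differential of $\F^{(0)}(\ldots;u(j)v_{k},y_{k};\ldots)$ to leave behind precisely a residual $dy_{k}^{j+1-N}$ multiplying each summand.

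It then remains to recognise $dx^{N}\,dy_{k}^{j+1-N}\,\del^{(0,j)}\psi_{N}^{(0)}(x,y_{k})$ as $\del^{(0,j)}\Psi_{N}^{(0)}(x,y_{k})\,dy_{k}^{j}$, which is immediate from the definition $\Psi_{N}^{(0)}(x,y)=\psi_{N}^{(0)}(x,y)\,dx^{N}dy^{1-N}$ provided one interprets $\del^{(0,j)}=\tfrac{1}{j!}\del_{y}^{j}$ as acting only on the scalar kernel $\psi_{N}^{(0)}$ and leaving the weight differentials inert. With this convention, the right hand side of \eqref{GenusZeroZhu} becomes
\begin{align*}
\sum_{k=1}^{n}\sum_{j\ge 0}\del^{(0,j)}\Psi_{N}^{(0)}(x,y_{k})\,dy_{k}^{j}\,\F^{(0)}(\ldots;u(j)v_{k},y_{k};\ldots),
\end{align*}
which is exactly the stated formula \eqref{GenusZeroZhuForms}.

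The only potential obstacle is conceptual rather than technical, namely pinning down the convention that $\del^{(0,j)}\Psi_{N}^{(0)}(x,y)$ differentiates only the scalar factor of the formal differential; once that is fixed, the remaining argument is a one-line arithmetic check on the exponents of the $dy_{k}$ symbols, and the result is transparent.
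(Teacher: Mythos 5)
Your proof is correct and is exactly the argument the paper intends: the paper offers no explicit proof, simply asserting that the corollary is Theorem~\ref{theor:GenusZeroZhu} "expressed in terms of formal differential forms," and your exponent bookkeeping (using \eqref{eq:vnVm} to get $\wt(u(j)v_{k})=\wt(v_{k})+N-j-1$, hence a residual $dy_{k}^{j+1-N}$ absorbed into $\del^{(0,j)}\Psi_{N}^{(0)}(x,y_{k})\,dy_{k}^{j}$) is precisely the verification being left to the reader. The convention you flag, that $\del^{(0,j)}$ acts only on the scalar kernel $\psi_{N}^{(0)}$, is indeed the one the paper uses throughout.
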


\section{Genus $g$ Zhu Recursion}
\label{sec:gZhu}
In this section  we introduce formal partition and $n$-point correlation functions for a VOA associated to a genus $g$ Riemann surface $\Sg$ in the Schottky  scheme  with sewing relation \eqref{eq:SchottkySewing2} of \S\ref{sec:Genus g}. The approach taken is a generalization of the genus two sewing schemes of~\cite{MT2,MT3} and genus two Zhu recursion of~\cite{GT}.
Initially, all expressions will be functions of formal variables $w_{\pm a},\rho_{a}$ and vertex operator parameters. We later describe a genus $g$ Zhu recursion formula with universal  coefficients that have a geometrical meaning and are meromorphic on $\Sg$ for all $(w_{\pm a},\rho_{a})\in\mathfrak{C}_{g}$. This is a generalization of the genus zero situation above and for genus one where one finds elliptic Weierstrass functions~\cite{Z}. 
From this section onwards (except for $n$-point functions) genus $g$ objects will normally carry no genus label whereas genus zero objects will be notated e.g. $\psi_{N}^{(0)}$.  
\subsection{Genus $g$ formal partition and $n$-point functions}
We first note an important lemma which we exploit below. Recall that  the Li-Z metric $\langle\cdot, \cdot\rangle$ of \eqref{eq:LiZinvar} (where we suppress the $\rho$ subscript for clarity) is invertible and that $\langle u,v \rangle=0$ for  $\wt(u)\neq \wt(v)$ for homogeneous $u,v$.  
Let $\{ b\}$ be a homogeneous basis for  $V$ with Li-Z dual basis $\{\bbar \}$. 
\begin{lemma}\label{AdjointLemma}
	For $u$ quasiprimary of weight $N$ we have
	\begin{align}
	\label{eq:adjointrel}
	\sum_{b\in V_n} \left(u(m)b\right)\otimes \bbar =\sum_{b\in V_{n+N-m-1}} b\otimes \left(u^\dagger(m)\bbar \right).
	\end{align}
\end{lemma}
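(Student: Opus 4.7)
The identity lives in a tensor product of two finite-dimensional graded pieces of $V$, so the natural strategy is to test it by pairing with an arbitrary element using the Li-Z metric on each tensor factor. Since the metric pairs $V_k$ only with itself, I first check that both sides live in the same bihomogeneous component. By \eqref{eq:vnVm}, for $b\in V_n$ one has $u(m)b\in V_{n+N-m-1}$ and $\bbar\in V_n$, so the left side lies in $V_{n+N-m-1}\otimes V_n$. On the right, for $b\in V_{n+N-m-1}$ the adjoint formula \eqref{eq:udagger} together with \eqref{eq:vnVm} gives $u^{\dagger}(m):V_{n+N-m-1}\to V_{n}$, so the right side lies in the same space. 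Thus it suffices to test both sides against $v\otimes w$ for $v\in V_{n+N-m-1}$ and $w\in V_n$ (pairings against other bihomogeneous components vanish automatically).

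The key input is the dual-basis identity $\sum_{b\in V_k}\langle x,b\rangle\,\bbar=x$ for $x\in V_k$, which follows immediately from the definition $\langle b,\bbar{}'\rangle=\delta_{b,b'}$ after expanding $x$ in the basis $\{\bbar\}$; combined with the symmetry of $\langle\cdot,\cdot\rangle$ stated after \eqref{eq:LiZinvar}, this yields
\begin{align*}
\sum_{b\in V_k}\langle b,x\rangle\langle \bbar,y\rangle=\langle x,y\rangle,\qquad x,y\in V_k.
\end{align*}
Pairing the left side of \eqref{eq:adjointrel} against $v\otimes w$ and applying the mode-level invariance $\langle u(m)b,v\rangle=\langle b,u^{\dagger}(m)v\rangle$ (read off from \eqref{eq:LiZinvar}) gives
\begin{align*}
\sum_{b\in V_n}\langle u(m)b,v\rangle\langle \bbar,w\rangle
=\sum_{b\in V_n}\langle b,u^{\dagger}(m)v\rangle\langle \bbar,w\rangle
=\langle u^{\dagger}(m)v,w\rangle,
\end{align*}
since $u^{\dagger}(m)v\in V_n$. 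Pairing the right side similarly and using the dual invariance $\langle u^{\dagger}(m)\bbar,w\rangle=\langle \bbar,u(m)w\rangle$ (which follows from \eqref{eq:LiZinvar} combined with the symmetry of the form) gives
\begin{align*}
\sum_{b\in V_{n+N-m-1}}\langle b,v\rangle\langle u^{\dagger}(m)\bbar,w\rangle
=\langle v,u(m)w\rangle=\langle u^{\dagger}(m)v,w\rangle,
\end{align*}
again by one application of invariance. Both pairings agree, and since $v,w$ were arbitrary and the Li-Z metric is non-degenerate on each $V_k$ (by simplicity and self-duality, cited earlier), the two tensors in \eqref{eq:adjointrel} coincide.

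\textbf{Main Obstacle.} There is no deep obstacle, only bookkeeping: one must correctly locate the weights of the summands on each side so that the pairings actually hit the diagonal component $V_{n+N-m-1}\otimes V_n$, and one must be careful that the form of invariance I need on the right side (moving $u^{\dagger}(m)$ past the pairing) is genuinely a consequence of \eqref{eq:LiZinvar} together with symmetry of $\langle\cdot,\cdot\rangle$, rather than an independent assumption. The quasiprimary hypothesis on $u$ enters only through the clean degree shift in \eqref{eq:udagger}; no associativity or locality is needed.
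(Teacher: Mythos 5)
Your proof is correct and is essentially the paper's argument in dual form: both hinge on the mode-level invariance $\langle u(m)b,v\rangle=\langle b,u^{\dagger}(m)v\rangle$ from \eqref{eq:LiZinvar} together with dual-basis completeness, the paper expanding both sides in the basis $\{c\otimes\bbar\}$ and matching coefficients where you pair against arbitrary $v\otimes w$ and invoke non-degeneracy. The weight bookkeeping and the derivation of $\langle u^{\dagger}(m)\bbar,w\rangle=\langle\bbar,u(m)w\rangle$ from invariance plus symmetry are both handled correctly, so nothing is missing.
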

\begin{proof} Since $\wt(u(m)b)=n+N-m+1$ for $b\in V_n$ we find
	\begin{align*}
	u(m)b =\sum_{c\in V_{n+N-m-1}}\langle \cbar , u(m)b\rangle c 
	= \sum_{c\in V_{n+N-m-1}}\langle u^\dagger(m)\cbar , b\rangle c.
	\end{align*}
	Similarly for $c\in V_{n+N-m-1}$ we find $\wt(u^\dagger(m)\cbar )=n$  from \eqref{eq:udagger} so that  
	\[
	u^\dagger(m)\cbar =\sum_{b\in V_n}\langle u^\dagger(m)\cbar , b\rangle \bbar .
	\]
	Hence the result follows on relabelling.
\end{proof}
\begin{remark}
	\label{rem:adjoint}
Suppose that $U$ is a subVOA of $V$ and  $\mathcal{W}\subset V$ is a $U$-module. For $u\in U$ and homogeneous $\mathcal{W}$-basis $\{ w\}$  we may then extend  \eqref{eq:adjointrel} to obtain  
	\begin{align*}
\sum_{w\in \mathcal{W}_{n}} \left(u(m)w\right)\otimes \overline{w}=\sum_{w\in \mathcal{W}_{n+N-m-1}} w\otimes \left(u^\dagger(m)\overline{w}\right).
\end{align*} 
\end{remark}
For each $a\in\Ip$, let $\{b_{a}\}$  denote a homogeneous  $V$-basis and let $\{\bbar _{a}\}$ be the  dual basis with respect to the Li-Z metric $\langle \cdot,\cdot\rangle_{1}$  i.e. with $\rho=1$.  Define
\begin{align}
\label{eq:bbar}
b_{-a}=\rho_{a}^{\wt(b_{a})}\bbar _{a},\quad a\in\Ip,
\end{align}
for a formal $\rho_{a}$ (later  identified with a Schottky sewing parameter). 
Then $\{b_{-a}\}$ is a dual basis for the Li-Z metric $\langle \cdot,\cdot\rangle_{\rho_{a}}$ with adjoint \eqref{eq:udagger}
\begin{align}\label{RhoAdjoint}
u^{\dagger}_{\rho_{a}}(m)=(-1)^{N}\rho_{a}^{m-N+1}u(2N-2-m),
\end{align}
for $u$ quasiprimary  of weight $N$.
Let $\bm{b}_{+}=(b_{1},\ldots,b_{g})$ denote an element of a $V^{\otimes g}$-basis with   dual basis $\bm{b}_{-}=(b_{-1},\ldots,b_{-g})$ with respect to the Li-Z metric $\langle \cdot,\cdot\rangle_{\rho_{a}}$. Let $w_{a}$ for $a\in\I$ be $2g$ formal variables (later identified with the canonical Schottky parameters). Consider the genus zero $2g$-point correlation function
\begin{align*}
Z^{(0)}(\bm{b,w})=&Z^{(0)}(b_{-1},w_{-1};b_{1},w_{1};\ldots;b_{-g},w_{-g};b_{g},w_{g})
\\
=&\prod_{a\in\Ip}\rho_{a}^{\wt(b_{a})}Z^{(0)}(\bbar_{1},w_{-1};b_{1},w_{1};\ldots;\bbar_{g},w_{-g};b_{g},w_{g}).
\end{align*}
We can now define the genus $g$ partition function as
\begin{align}\label{GenusgPartition}
Z_{V}^{(g)}:=Z_{V}^{(g)}(\bm{w,\rho})
=\sum_{\bm{b}_{+}}Z^{(0)}(\bm{b,w}),
\end{align}
for $\bm{w,\rho}=w_{\pm 1},\rho_{1},\ldots ,w_{\pm g},\rho_{g}$ and 
where the sum is over any basis $\{\bm{b}_{+}\}$ of $V^{\otimes g}$.
This definition is motivated by the sewing relation \eqref{eq:SchottkySewing2} and ideas in~\cite{MT2,MT3}. 
\begin{remark} 
	Note that $Z_{V}^{(g)}$ depends on  $\rho_{a}$ via the dual vectors $\bm{b}_{-}$ as in \eqref{eq:bbar}. In particular, setting $\rho_{a}=0$ for some $a\in\Ip$ then $Z_{V}^{(g)}$ degenerates to a genus $g-1$ partition function. Furthermore, the genus $g$ partition function for the tensor product $V_{1}\otimes V_{2}$ of two VOAs $V_{1}$ and $V_{2}$ is  $Z^{(g)}_{V_{1}\otimes V_{2}}=Z_{V_{1}}^{(g)}Z_{V_{2}}^{(g)}$.
\end{remark}

We may  define the genus $g$  formal $n$-point function for $n$ vectors $v_{1},\ldots,v_{n}\in V$ inserted at $y_{1},\ldots,y_{n}$ by
\begin{align}\label{GenusgnPoint}
Z_{V}^{(g)}(\bm{v,y}):=Z_{V}^{(g)}(\bm{v,y};\bm{w,\rho})
=
\sum_{\bm{b}_{+}}Z^{(0)}(\bm{v,y};\bm{b,w}),
\end{align}
where $
Z^{(0)}(\bm{v,y};\bm{b,w})=Z^{(0)}(v_{1},y_{1};\ldots;v_{n},y_{n};b_{-1},w_{-1};\ldots;b_{g},w_{g})$. We show below in Theorem~\ref{theor:ZZhu} that this definition concurs with that of Zhu for genus one~\cite{Z}.

Let $U$ be a subVOA of $V$ where $V$ has a $U$-module decomposition 
$V=\bigoplus_{\alpha\in A}\mathcal{W}_{\alpha}$ for  $U$-modules $\mathcal{W}_{\alpha}$ and some indexing set $A$.
Let $\mathcal{W}_{\bm{\alpha}}=\bigotimes_{a=1}^{g} \mathcal{W}_{\alpha_{a}}$ denote a tensor product of $g$ modules labelled $\alpha_{1},\ldots ,\alpha_{g}$. We define
\begin{align}\label{eq:Z_Walpha}
Z_{\mathcal{W}_{\bm{\alpha}}}^{(g)}(\bm{v,y}) :=\sum _{\bm{b_{+}}\in \mathcal{W}_{\bm{\alpha}}} Z^{(0)}(\bm{v,y};\bm{b,w}),
\end{align}
where here the sum is over a basis $\{\bm{b}_{+}\}$ for  $\mathcal{W}_{\bm{\alpha}}$. It follows that
\begin{align}
\label{eq:Z_WalphaSum}
Z_{V}^{(g)}(\bm{v,y})=\sum_{\bm{\alpha}\in\bm{A}}Z_{\mathcal{W}_{\bm{\alpha}}}^{(g)}(\bm{v,y}),
\end{align}
where the sum ranges over $\bm{\alpha}=(\alpha_{1},\ldots ,\alpha_{g}) \in \bm{A}$ for $\bm{A}=A^{\otimes{g}}$. 
Finally, it is useful to define corresponding formal $n$-point correlation differential forms 
\begin{align*}
\F_{V}^{(g)}(\bm{v,y}):=Z^{(g)}(\bm{v,y})\bm{dy^{\wt(v)}},\quad 
\F_{\mathcal{W}_{\bm{\alpha}}}^{(g)}(\bm{v,y})=Z_{\mathcal{W}_{\bm{\alpha}}}^{(g)}(\bm{v,y})\bm{dy^{\wt(v)}},
\end{align*}
where  $\bm{dy^{\wt(v)}}=\prod_{k=1}^{n}dy_{k}^{\wt(v_{k})}$.

\subsection{Comparison to genus one trace functions} 
 We  now show  that we obtain Zhu's definition~\cite{Z} of a genus one $n$-point function (including the partition function\footnote{
 An alternative way to find  the partition function based on Catalan series  is described in~\cite{MT3}.})
 from our genus $g$ definition \eqref{GenusgnPoint} when $g=1$. 
In this case there are three Schottky parameters $W_{\pm 1}$ and $q=q_{1}$
with corresponding canonical coordinates $w_{\pm 1}$ and $\rho=\rho_{1}$. 
The Schottky group is $\Gamma=\langle \gamma \rangle $ where 
$\gamma=\sigma^{-1}
\left(
\begin{smallmatrix}
	q^{1/2} &0\\
	0 &q^{-1/2}
\end{smallmatrix}  \right)
\sigma$ for 
\begin{align}
\label{eq:sigma}
\sigma=(-W)^{-1/2}  \begin{pmatrix}1 & -W_{-1}\\1 & -W_{1}\end{pmatrix},
\end{align}
where $W=W_{1}-W_{-1}$ from \eqref{eq:gammaa}.
For $x\in\D$ then $\zeta =\log X$ where $X=\sigma x=\frac{x-W_{-1}}{x-W_{1}}$ lies in a fundamental region with periods $\tpi,\tpi \tau$  in the standard description of a torus with modular parameter $q=e^{\tpi \tau}$.

The genus one $n$-point function  for $v_{1},\ldots ,v_{n}$ is defined by Zhu by the trace~\cite{Z}
\begin{align}
\label{eq:Zhunpt}
Z^{\Zhu}_{V}(\bm{v,\zeta};q):=&
\Tr_{V} Y\left(e^{ \wt(v_{1})\zeta_{1}}v_{1},e^{\zeta_{1}}\right)\ldots  
Y\left(e^{\wt(v_{n})\zeta_{n} }v_{n},e^{\zeta_{n}}\right)q^{L(0)}  ,
\end{align}
where we have omitted the standard  $q^{-C/24}$ factor  introduced to enhance the $\SL_{2}(\Z)$ modular properties of trace functions.
Comparing to our definition \eqref{GenusgnPoint} we find
\begin{theorem}\label{theor:ZZhu}
Let $v_{1},\ldots ,v_{n}$ be quasiprimary vectors inserted at $x_{1},\ldots, x_{n}$. Then 
\[
\F_{V}^{(1)}(\bm{v,x})=Z^{\Zhu}_{V}(\bm{v,\zeta};q)\bm{d\zeta^{\wt(v)}},
\]
for  $\zeta_{i} =\log X_{i}$ with $X_{i}=\sigma x_{i}=\frac{x_{i}-W_{-1}}{x_{i}-W_{1}}$ for $i=1,\ldots, n$.
\end{theorem}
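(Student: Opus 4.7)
The plan is to relate the $g=1$ specialization of \eqref{GenusgnPoint} to Zhu's trace \eqref{eq:Zhunpt} by applying the M\"obius map $\sigma$ of \S\ref{sec:Schottky} (which sends $(W_{-1},W_1)\mapsto(0,\infty)$ and thereby converts the Schottky sewing picture into the torus exponential coordinate $X=e^\zeta$), and then recognizing the transformed sum over the basis $\{b\}$ as a trace on $V$. Unpacking the definition and using \eqref{eq:bbar},
\begin{align*}
Z_V^{(1)}(\bm{v,x}) = \sum_{b} \rho^{\wt(b)}\,\langle\vac,\, Y(v_1,x_1)\cdots Y(v_n,x_n)\, Y(\bbar, w_{-1})\, Y(b, w_1)\,\vac\rangle,
\end{align*}
and the target is $\Tr_V Y(u_1, X_1)\cdots Y(u_n, X_n)\,q^{L(0)}$ with $u_i = e^{\wt(v_i)\zeta_i}v_i$, times the scalar $\prod_i X_i^{-\wt(v_i)}$ accounting for $\bm{dX^{\wt(v)}}=\bigl(\prod_i X_i^{\wt(v_i)}\bigr)\bm{d\zeta^{\wt(v)}}$.

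The first step applies the covariance formula \eqref{eq:MobV} with $\gamma=\sigma$. Since $D_\sigma\vac=\vac$ (as $L(0)\vac=L(\pm1)\vac=0$) and $L(1)v_i=0$ (quasiprimary), each $Y(v_i,x_i)$ is replaced by $(Cx_i+D)^{-2\wt(v_i)}Y(v_i,X_i)$ with $Cx+D=(-W)^{-1/2}(x-W_1)$; as $dX/dx=-W/(x-W_1)^2=(Cx+D)^{-2}$, these weight factors are exactly the Jacobians $(dX_i/dx_i)^{\wt(v_i)}$ converting $\bm{dx^{\wt(v)}}$ into $\bm{dX^{\wt(v)}}$. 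The insertions at $w_{\pm1}$ are more delicate because the basis $\{b\}$ need not be quasiprimary, so the $e^{-C(Cz+D)L(1)}$ factor in \eqref{eq:MobV} contributes nontrivially; however, Lemma~\ref{AdjointLemma} (with Remark~\ref{rem:adjoint}) allows transferring any $u(m)$ acting on $b$ into $u^\dagger(m)$ acting on $\bbar$, so these $L(1)$-actions can be absorbed into a re-summation over a rotated basis, leaving only a scalar prefactor. Using $\sigma w_{-1}=q$ and $\sigma w_1=q^{-1}$ (a direct calculation from \eqref{eq:wa}) together with $\rho=-qW^2/(1-q)^2$ from \eqref{eq:rhoa}, the combined prefactor simplifies to exactly $q^{\wt(b)}$.

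The second step recognizes the transformed correlator as a trace. Using the adjoint formula \eqref{eq:Yadj} and the symmetry of the Li-Z metric, the left insertion $\langle\vac,Y(\bbar,q)\cdots\rangle$ converts into a bra $\langle\bbar,\cdots\rangle$, while $Y(b,q^{-1})\vac=e^{q^{-1}L(-1)}b$, which after an auxiliary dilation moving the sewing points $(q,q^{-1})$ to $(0,\infty)$ becomes the ket $|b\rangle$. Summing over $b$ then collapses $\sum_b q^{\wt(b)}\langle\bbar,A\,b\rangle$ into $\Tr_V A\,q^{L(0)}$ for $A=Y(v_1,X_1)\cdots Y(v_n,X_n)$. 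Finally, $dX_i=X_i\,d\zeta_i$ rewrites $Y(v_i,X_i)\,dX_i^{\wt(v_i)}=Y(e^{\wt(v_i)\zeta_i}v_i,e^{\zeta_i})\,d\zeta_i^{\wt(v_i)}$, identifying $\F_V^{(1)}(\bm{v,x})$ with the integrand of \eqref{eq:Zhunpt}.

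The main obstacle is the careful bookkeeping of scalar factors when $\{b\}$ is not quasiprimary: the $e^{-C(Cz+D)L(1)}$ piece of \eqref{eq:MobV} at $w_{\pm1}$ creates nontrivial mixing within each weight space of $V$, which only cancels after invoking Lemma~\ref{AdjointLemma} to transfer $L(1)$-actions between $b$ and $\bbar$ and re-summing. Verifying that this mixing produces precisely $q^{\wt(b)}$, with all residual powers of $W=W_1-W_{-1}$ and $(1-q)$ cancelling and no mixing between distinct weight spaces surviving, is the delicate combinatorial step that vindicates the choice of canonical Schottky parametrization $(w_{\pm a},\rho_a)$ as the correct VOA sewing coordinates for reproducing Zhu's trace at genus one.
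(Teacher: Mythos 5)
Your overall strategy is the same as the paper's (apply the M\"obius covariance \eqref{eq:MobV} for $\sigma$, convert the two sewing insertions at $\sigma w_{-1}=q$, $\sigma w_{1}=q^{-1}$ into a bra--ket pair via the adjoint \eqref{eq:Yadj}, and recognize the basis sum as a trace), and your treatment of the quasiprimary insertions $Y(v_i,x_i)$ and the Jacobian factors $\bm{dx^{\wt(v)}}\to\bm{dX^{\wt(v)}}$ is correct. However, there is a genuine gap at the central step. You assert that the operator factors $e^{-c(cw_{\pm 1}+d)L(1)}$ appearing at the sewing insertions can be ``absorbed into a re-summation over a rotated basis, leaving only a scalar prefactor'' equal to $q^{\wt(b)}$, with Lemma~\ref{AdjointLemma} cited as the mechanism. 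This is precisely the hard part of the proof, and the mechanism you name does not suffice. Transferring $e^{\alpha L(1)}$ from $b$ to the dual side via Lemma~\ref{AdjointLemma} produces $e^{\alpha' L(-1)}$ acting on $\bbar$, which then sits next to the other factor $e^{\beta L(1)}$ already present there; collapsing the resulting product $e^{\alpha' L(-1)}e^{\beta L(1)}\lambda^{L(0)}\cdots$ to $q^{L(0)}$ requires the $\slLie_{2}(\C)$ composition identity \eqref{eq:Lm1L1com} (equivalently \eqref{eq:Bigid}) together with cyclicity of the trace --- this is exactly what the paper proves as a separate proposition and then invokes. You never identify this identity or any substitute for it.

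Moreover, the intermediate bookkeeping contradicts your claim: after applying $\sigma$ and the $L(0)$-scalars, the prefactor multiplying each weight-$\wt(b)$ term is $\left(-q^{-1}(1-q)^{2}\right)^{\wt(b)}$, not $q^{\wt(b)}$; the residual factors of $-q^{2}$ and $(1-q)^{\pm 2}$ cancel only at the very end, after the adjoint conversion contributes $(-q^{2})^{L(0)}$ and the identity \eqref{eq:Bigid} (with $x=q$, $y=-(1-q)^{-1}$) eliminates the remaining $(1-q)^{2L(0)}$. A secondary slip: no dilation sends the pair $(q,q^{-1})$ to $(0,\infty)$, and the paper does not perform such a step --- the ket is produced directly from $Y(u,q)\vac=e^{qL(-1)}u$ and the invariance \eqref{eq:LiZinvar}, with the resulting translation operators handled again by the $\slLie_{2}(\C)$ identities. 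To close the gap you should state and prove the relations \eqref{eq:L0Lpm1conj}--\eqref{eq:Lm1L1com} and carry out the explicit collapse of the sandwich of exponentials to $q^{L(0)}$ inside the trace.
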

In order to prove Theorem~\ref{theor:ZZhu} we require the following  identities:
\begin{proposition}
The $\slLie_{2}(\C)$ generators $L(\pm 1),L(0)$ obey the relations:
\begin{align}
\lambda^{L(0)} L(\pm 1)\lambda^{-L(0)}&=\lambda^{\mp 1}L(\pm 1),
\label{eq:L0Lpm1conj}
\\[3pt]
e^{x L(\pm 1)}L(0)e^{-x L(\pm 1)}&=L(0)\pm xL(\pm 1),
\label{eq:LpmL0conj}
\\[3pt]
e^{x L(1)}L(-1)e^{-x L(1)}&=L(-1)+2xL(0)+x^{2}L(1),
\label{eq:L1Lm1conj}
\\[3pt]
e^{x L(-1)}e^{yL(1)}&=(1-xy)^{L(0)}e^{y L(1)} e^{x L(-1)}(1-xy)^{L(0)}.
\label{eq:Lm1L1com}
\end{align}
\end{proposition}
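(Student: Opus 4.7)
The generators $L(-1), L(0), L(1)$ form a copy of $\slLie_{2}(\C)$ inside the Virasoro algebra, since the central term $\tfrac{m^{3}-m}{12}\delta_{m,-n}C$ in the Virasoro bracket vanishes whenever $m,n\in\{-1,0,1\}$. Thus all four relations are purely $\slLie_{2}$ statements and can be deduced from the brackets
\begin{align*}
[L(0),L(\pm 1)]=\mp L(\pm 1),\qquad [L(1),L(-1)]=2L(0).
\end{align*}
The first three identities will follow from Hadamard's formula $e^{X}Ye^{-X}=\sum_{k\ge 0}\tfrac{1}{k!}\ad(X)^{k}Y$ applied to elements of this triple.

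Concretely, for \eqref{eq:L0Lpm1conj} I would take $X=(\log\lambda)L(0)$ and $Y=L(\pm 1)$: the eigen-relation $\ad(L(0))L(\pm 1)=\mp L(\pm 1)$ collapses the Hadamard series to the scalar exponential $e^{\mp\log\lambda}L(\pm 1)=\lambda^{\mp 1}L(\pm 1)$. For \eqref{eq:LpmL0conj} the double bracket $[L(\pm 1),[L(\pm 1),L(0)]]=[L(\pm 1),\pm L(\pm 1)]=0$ truncates the Hadamard expansion at the linear term $L(0)\pm xL(\pm 1)$. For \eqref{eq:L1Lm1conj} the successive brackets $[L(1),L(-1)]=2L(0)$, $[L(1),2L(0)]=2L(1)$, $[L(1),2L(1)]=0$ truncate the series after the quadratic term and produce exactly $L(-1)+2xL(0)+x^{2}L(1)$.

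The genuinely global statement \eqref{eq:Lm1L1com} is where the main work lies, since it is a product formula rather than an infinitesimal commutator identity. My approach is to note that both sides, after formal expansion in $x,y$, are well-defined operator-valued formal series involving only the $\slLie_{2}$-triple $\{L(-1),L(0),L(1)\}$ (the factor $(1-xy)^{L(0)}$ being defined via its binomial expansion using the polynomials $\binom{L(0)}{k}$). The identity therefore lives in a completion of $U(\slLie_{2})[[x,y]]$ and may be checked in any faithful representation. Using the defining representation with the correspondence $L(-1)\mapsto -e$, $L(0)\mapsto h/2$, $L(1)\mapsto f$, the exponentials become the unipotent matrices $\bigl(\begin{smallmatrix}1 & -x\\ 0 & 1\end{smallmatrix}\bigr)$, $\bigl(\begin{smallmatrix}1 & 0\\ y & 1\end{smallmatrix}\bigr)$ and the diagonal matrix $\bigl(\begin{smallmatrix}(1-xy)^{1/2} & 0\\ 0 & (1-xy)^{-1/2}\end{smallmatrix}\bigr)$. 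A direct $2\times 2$ multiplication shows that both sides equal $\bigl(\begin{smallmatrix}1-xy & -x\\ y & 1\end{smallmatrix}\bigr)$, which is the standard Gauss (LDU) factorization of a generic element of $\SL_{2}(\C)$ with nonvanishing upper-left minor.

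An equivalent operator-theoretic route, avoiding any passage through a faithful representation, is to prove that both sides of \eqref{eq:Lm1L1com}, viewed as operator-valued power series in $x$, agree at $x=0$ (both equal $e^{yL(1)}$) and satisfy the same first-order ODE $\partial_{x}F(x)=L(-1)F(x)$. Verifying the ODE on the right-hand side uses \eqref{eq:L0Lpm1conj} to conjugate $L(-1)$ through the outer $(1-xy)^{L(0)}$ factors (picking up a factor of $(1-xy)^{\pm 1}$) and \eqref{eq:L1Lm1conj} to conjugate it through $e^{yL(1)}$; the cross terms produced by these manipulations cancel precisely because of the matched pair of $(1-xy)^{L(0)}$ factors on either side of $e^{yL(1)}e^{xL(-1)}$. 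This is the part where the three previously established identities are genuinely used.
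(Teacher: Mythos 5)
Your proposal is correct and matches the paper's proof in essentially every respect: the first three identities via the adjoint (Hadamard) expansion $e^{xA}Be^{-xA}=e^{x\,\ad_A}B$, and \eqref{eq:Lm1L1com} by the two routes the paper itself gives, namely the first-order ODE $\del_{x}K=L(-1)K$ with initial condition $K(0,y)=e^{yL(1)}$ and the explicit $2\times 2$ check in the fundamental representation (your Gauss-factorization computation agrees with the paper's, up to an equivalent choice of matrix identifications).
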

\begin{proof}
 \eqref{eq:L0Lpm1conj}-\eqref{eq:L1Lm1conj} are standard vertex algebra results as follows~\cite{FHL}.
\eqref{eq:L0Lpm1conj} holds since $\lambda^{L(0)}Y(\omega ,z)\lambda^{-L(0)}=Y(\lambda^{2}\omega ,\lambda z)$. Using $e^{xA}Be^{-xA}=e^{x\,\ad_{A}}B$ for linear operators $A,B$ then \eqref{eq:LpmL0conj}  and \eqref{eq:L1Lm1conj} follow. 
Let $K(x,y)$ denote the right hand side of \eqref{eq:Lm1L1com}. 
Using  \eqref{eq:L0Lpm1conj}-\eqref{eq:L1Lm1conj} 
we find  $\del_{x}K(x,y)=L(-1)K(x,y)$ 
%\begin{align*}
%& (1-xy)^{L(0)}\Big(-y(1-xy)^{-1}L(0)e^{y L(1)} e^{x L(-1)}
%+e^{y L(1)}L(-1) e^{x L(-1)}
%\\
%&\quad   -y(1-xy)^{-1}e^{y L(1)} e^{x L(-1)}L(0)\Big)(1-xy)^{L(0)}
%\\
%&= (1-xy)^{L(0)}\Big(-y(1-xy)^{-1}L(0)e^{y L(1)} e^{x L(-1)}
%+(L(-1)+2yL(0)+y^{2}L(1))e^{y L(1)}e^{x L(-1)}
%\\
%&\quad   -y(1-xy)^{-1}e^{y L(1)}\left(L(0)-xL(-1)\right) e^{x L(-1)}\Big)(1-xy)^{L(0)}
%\\
%&= \Big((1-xy)L(-1)+(y-2xy^{2})(1-xy)^{-1}L(0)+y^{2}(1-xy)^{-1}L(1)\Big)K(x,y)
%\\
%&\quad  +(1-xy)^{L(0)-1}\Big(xy L(-1)-(y-2xy^{2})L(0)-y^2(1-xy)L(1) \Big)(1-xy)^{-L(0)}K(x,y)
%\\
%&=L(-1)K(x,y).
%\end{align*}
so that $K(x,y)=e^{xL(-1)}K(0,y)=e^{x L(-1)}e^{yL(1)}$.

An  elementary Lie algebra proof follows by verifying  \eqref{eq:L0Lpm1conj}-\eqref{eq:Lm1L1com} for the fundamental $\slLie(2,\C)$ Lie algebra representation with the canonical identifications:
$L(-1)=\left(\begin{smallmatrix}
0 & 1 \\ 0 & 0\end{smallmatrix}\right)$, 
$L(0)=\left(\begin{smallmatrix}
1 & 0 \\ 0 & -1 \end{smallmatrix}\right)$
 and $L(1)=\left(\begin{smallmatrix}
0 &  0 \\ -1 & 0\end{smallmatrix}\right)$
and  $\SL_{2}(\C)$  elements 
$e^{x L(-1)}=\left(\begin{smallmatrix}
	1 & x \\ 0 & 1\end{smallmatrix}\right)$, 
$e^{yL(1)}=\left(\begin{smallmatrix}
	1&  0 \\ -y & 1\end{smallmatrix}\right)$
and 
$
\lambda^{L(0)} = \left(\begin{smallmatrix}
\sqrt{\lambda}  & 0 \\ 0 & 1/\sqrt{\lambda}
\end{smallmatrix}\right)$.
 \eqref{eq:L0Lpm1conj}-\eqref{eq:Lm1L1com} therefore hold for any $\slLie(2,\C)$  representation.  
\end{proof}
\begin{remark}
Using  \eqref{eq:L0Lpm1conj} then identity \eqref{eq:Lm1L1com} is equivalent to  
\begin{align}
\label{eq:Bigid}
e^{x L(-1)}e^{yL(1)}e^{-x(1-xy)^{-1} L(-1)} 
e^{-y(1-xy)L(1)}(1-xy)^{-2L(0)}
=\Id_{V}.
\end{align}
\end{remark}

\begin{proof}[Proof of Theorem \ref{theor:ZZhu}]
Consider  
\begin{align*}
\F_{V}^{(1)}(\bm{v,x})
=&\sum_{b_{1}\in V}\left\langle 
\vac,Y\left(b_{1},w_{1}\right)\bm{ Y\left(v,x\right)}Y\left(b_{-1},w_{-1}\right)\vac\right\rangle_{\rho}\bm{dx^{\wt(v)}}
\\
=&\sum_{n\ge 0}\rho^{n}\sum_{b\in V_{n}}\left\langle 
\vac,Y\left(b,w_{1}\right)\bm{Y(v,x)}Y\left(\bbar,w_{-1}\right)\vac\right\rangle \bm{dx^{\wt(v)}},
\end{align*}
recalling \eqref{eq:uvA} for standard Li-Z metric with $\rho=1$.
For $\sigma$ of \eqref{eq:sigma} we note that  
\begin{align*}
&\sigma w_{-1}=q,\quad \sigma w_{1}=\frac{1}{q},
\\
& c(cw_{-1}+d)=\frac{1}{1-q},\quad  c(cw_{1}+d)=-\frac{q}{1-q},
\\
& (cw_{-1}+d)(cw_{1}+d)=\frac{qW}{(1-q)^{2}},
\end{align*}
where $c=(-W)^{-1/2}$ and $d=   -(-W)^{-1/2}W_{1}$.
Applying \eqref{eq:MobV} for $\sigma $ and  recalling from \eqref{eq:rhoa} that $\rho =-q W^{2}/(1-q)^{2}$  we obtain
\begin{align*}
\F_{V}^{(1)}(\bm{v,x})
&=\sum_{n\ge 0}
\left(-q^{-1}(1-q)^2\right)^{n}
\sum_{b\in V_{n}}
 \left\langle 
\vac,Y\left(e^{\frac{q}{1-q}L(1)}b,q^{-1}\right)
\bm{ Y\left(v,X\right)}Y\left(e^{-\frac{1}{1-q}L(1)}\bbar,q\right)\vac\right\rangle 
\bm{dX^{\wt(v)}},
\end{align*}
for $X_{i}=\sigma x_{i}$.
From \eqref{eq:Yadj} we find $Y\left(e^{\frac{q}{1-q}L(1)}b,q^{-1}\right) 
= Y^{\dagger}\left((-q^2)^{L(0)}e^{\frac{q^2}{1-q}L(1)}b,q\right)$ so that 
\begin{align*}
& \left\langle 
\vac,\;Y\left(e^{\frac{q}{1-q}L(1)}b,q^{-1}\right)
\bm{ Y\left(v,X\right)}Y\left(e^{-\frac{1}{1-q}L(1)}\bbar,q\right)\vac\right\rangle 
\\
&=
\left\langle 
Y\left((-q^2)^{L(0)}e^{\frac{q^2}{1-q}L(1)}b,q\right)\vac,\;
\bm{ Y\left(v,X\right)}Y\left(e^{-\frac{1}{1-q}L(1)}\bbar,q\right)\vac\right\rangle 
\\
&=
\left\langle 
e^{qL(-1)}(-q^2)^{L(0)}e^{\frac{q^2}{1-q}L(1)}b,\;
\bm{ Y\left(v,X\right)}e^{qL(-1)}e^{-\frac{1}{1-q}L(1)}\bbar\right\rangle 
\\
&=
\left\langle 
b,\;
e^{\frac{q^2}{1-q}L(-1)}(-q^2)^{L(0)}e^{qL(1)}
\bm{ Y\left(v,X\right)}e^{qL(-1)}e^{-\frac{1}{1-q}L(1)}\bbar\right\rangle .
\end{align*}
Thus it follows that 
\begin{align*}
\F_{V}^{(1)}(\bm{v,x})
&=
\Tr_{V}\left(
 e^{\frac{q^2}{1-q}L(-1)}(-q^2)^{L(0)}e^{qL(1)}\bm{Y\left(v,X\right)}e^{qL(-1)}e^{-\frac{1}{1-q}L(1)}\left(-q^{-1}(1-q)^2\right)^{L(0)}\right)
\bm{dX^{\wt(v)}}.
\end{align*}
Hence using $\Tr (AB)=\Tr (BA)$  we obtain
\begin{align*}
\F_{V}^{(1)}(\bm{v,x})
&= \Tr_{V}\left(\bm{Y\left(v,X\right)}
e^{qL(-1)}e^{-\frac{1}{1-q}L(1)}
\left(-q^{-1}(1-q)^2\right)^{L(0)}
e^{\frac{q^2}{1-q}L(-1)}(-q^2)^{L(0)}e^{qL(1)}\right)\bm{dX^{\wt(v)}}
\\
%&= \Tr_{V}\left(\bm{Y\left(v,X\right)}
%e^{qL(-1)}e^{-\frac{1}{1-q}L(1)}
%e^{ -q(1-q) L(-1)}\left(-q^{-1}(1-q)^2\right)^{L(0)}
%e^{-q^{-1}L(1)}(-q^2)^{L(0)}\right)\bm{dX^{\wt(v)}}
%\\
&= \Tr_{V}\left(\bm{Y\left(v,X\right)}
e^{qL(-1)}e^{-\frac{1}{1-q}L(1)}
e^{ -q(1-q)L(-1)}e^{\frac{1}{(1-q)^2}L(1)}
\left(q(1-q)^2\right)^{L(0)}\right)\bm{dX^{\wt(v)}},
\end{align*}
using \eqref{eq:L0Lpm1conj}.
Letting $x=q,y=-(1-q)^{-1}$ so that $1-xy=(1-q)^{-1}$ in \eqref{eq:Bigid} we find
\[
e^{qL(-1)}e^{-\frac{1}{1-q}L(1)}e^{ -q(1-q)L(-1)}e^{\frac{1}{(1-q)^2}L(1)}\left(1-q\right)^{2L(0)}=\Id_{V}.
\]
Thus we have shown that
\begin{align*}
\F_{V}^{(1)}(\bm{v,x})
&=
\Tr_{V}\left( \bm{ Y\left(v,X\right)}q^{L(0)}\right) \bm{dX^{\wt(v)}}
\\
&=Z^{\Zhu}_{V}(\bm{v,\zeta};q)\bm{d\zeta^{\wt(v)}},
\end{align*}
since $\bm{dX^{\wt(v)}}=\prod_{k=1}^{n}e^{\wt(v_{k})\zeta_{k}}d\zeta_{k}^{\wt(v_{k})}$. 
Therefore  the theorem holds.
\end{proof}

\subsection{$\SL_{2}(\C)$-invariance}
For $W_{a}$ of \eqref{eq:SchottkySewing} define the differential operators\footnote{$\mathcal{L}_{\pm 1},\mathcal{L}_{0}$  are elements of the tangent space $T(\mathfrak{C}_{g})$.}
\begin{align}
\mathcal{L}_{r}:=-\sum_{a\in\I}W_{a}^{r+1}\del_{W_{a}},\quad r=0,\pm 1,
\label{eq:ln}
\end{align}
satisfying the  $\slLie(2,\C)$  algebra $[\mathcal{L}_{r},\mathcal{L}_{s}]=(r-s)\mathcal{L}_{r+s}$. From \eqref{eq:wa} and \eqref{eq:rhoa} we find
	\begin{align*}
	\mathcal{L}_{-1} &=-\sum_{a\in\I}\del_{w_{a}},\quad 
	\mathcal{L}_{0}=-\sum_{a\in\I}\left(w_{a}\del_{w_{a}}+\rho_{a}\del_{\rho_{a}}\right),
	\notag
	\\
	\mathcal{L}_{1}&=-\sum_{a\in\I}\left((w_{a}^{2}+\rho_{a})\del_{w_{a}}+2w_{a}\rho_{a}\del_{\rho_{a}}\right).
	\end{align*}
\begin{proposition}\label{prop:ZSL2C}
	The genus $g$ partition function is  $\SL_{2}(\C)$-invariant with
	\begin{align*}
	\mathcal{L}_{r}Z_{V}^{(g)}=0,\quad r=-1,0,1.
	\end{align*}
\end{proposition}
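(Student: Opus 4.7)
The plan is to convert each derivative $\del_{w_{a}}$, $\rho_{a}\del_{\rho_{a}}$ into a Virasoro mode acting inside the genus-zero correlator $Z^{(0)}(\bm{b,w})$ that defines $Z_{V}^{(g)}$, and then exploit that $L(-1), L(0), L(1)$ all annihilate the vacuum both on the right and on the left of the Li-Z pairing. The right-annihilation $L(r)\vac=0$ holds for $r\ge -1$, and the left-annihilation $\langle\vac,L(r)\cdot\rangle=0$ follows from the adjoint identity $L(r)^{\dagger}=L(-r)$ together with $L(-r)\vac=0$ for $r\le 1$. Consequently every total-commutator expression $\langle\vac,[L(r),\prod_{a}Y(b_{\pm a},w_{\pm a})]\vac\rangle$ telescopes to zero for $r=-1,0,1$. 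The recipe is to rewrite $\del_{w_{a}}$, $w_{a}\del_{w_{a}}$, $w_{a}^{2}\del_{w_{a}}$ using the three standard identities $\del_{z}Y(v,z)=[L(-1),Y(v,z)]$, $z\del_{z}Y(v,z)=[L(0),Y(v,z)]-Y(L(0)v,z)$ and $z^{2}\del_{z}Y(v,z)=[L(1),Y(v,z)]-2zY(L(0)v,z)-Y(L(1)v,z)$, then discard the telescoped commutators and match the residual $L(0), L(1)$ terms against the $\del_{\rho_{a}}$ contributions.

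The cases $r=-1,0$ are quick. For $\mathcal{L}_{-1}$ the commutator identity alone suffices, since the only term produced by $\sum_{a\in\I}\del_{w_{a}}$ is a telescoping $L(-1)$ commutator. For $\mathcal{L}_{0}$, telescoping leaves $-\sum_{a\in\I}\wt(b_{a})Z^{(0)}(\bm{b,w})$ from the $w$-derivatives, using $Y(L(0)b,w)=\wt(b)Y(b,w)$ on a homogeneous basis. Meanwhile $\rho_{a}$ enters $Z^{(0)}(\bm{b,w})$ only through the factor $\rho_{a}^{\wt(b_{a})}$ in the dual basis \eqref{eq:bbar}, so $\sum_{a\in\I}\rho_{a}\del_{\rho_{a}}Z^{(0)}(\bm{b,w})=+\sum_{a\in\I}\wt(b_{a})Z^{(0)}(\bm{b,w})$, using $\rho_{-a}=\rho_{a}$ and $\wt(b_{-a})=\wt(b_{a})$. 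The two contributions cancel summand-by-summand in $\bm{b}_{+}$, giving $\mathcal{L}_{0}Z_{V}^{(g)}=0$.

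The $\mathcal{L}_{1}$ case is the main obstacle. After telescoping the $[L(1),\cdot]$ piece, the $-2zY(L(0)v,z)$ piece cancels against the $-2\sum_{a}w_{a}\rho_{a}\del_{\rho_{a}}$ term of $\mathcal{L}_{1}$ by exactly the same calculation as for $\mathcal{L}_{0}$. What remains to be proved is the identity
\begin{align*}
\sum_{a\in\I}\sum_{\bm{b}_{+}} Z^{(0)}(\ldots;L(1)b_{\pm a},w_{\pm a};\ldots)
=\sum_{a\in\I}\rho_{a}\sum_{\bm{b}_{+}} Z^{(0)}(\ldots;L(-1)b_{\pm a},w_{\pm a};\ldots),
\end{align*}
which precisely matches the coefficient of $\rho_{a}\del_{w_{a}}$ in $\mathcal{L}_{1}$. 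I would prove this pair-by-pair in $a\in\Ip$ by applying Lemma~\ref{AdjointLemma} with $u=\omega$, $m=2$, $N=2$, which yields $\sum_{b\in V_{n}}(L(1)b)\otimes\bbar=\sum_{b\in V_{n-1}}b\otimes L(-1)\bbar$. Reinstating the factor $\rho_{a}^{\wt(b_{a})}$ from $b_{-a}=\rho_{a}^{\wt(b_{a})}\bbar_{a}$ and shifting the weight index $n\mapsto n-1$ produces exactly one factor of $\rho_{a}$. The symmetric case in which $L(1)$ acts on $b_{-a}$ rather than on $b_{a}$ is handled by swapping the two tensor factors and relabelling, which is legitimate since the Li-Z form is symmetric so the dual of the dual basis returns the original basis. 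Combining the three cases $r=-1,0,1$ then completes the proof.
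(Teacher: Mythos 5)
Your proof is correct and follows essentially the same route as the paper: the ``telescoping commutator'' vanishing you invoke is exactly equation~\eqref{ZhuPrepLemma} specialized to $u=\omega$ with $\ell=0,1,2$, the $\rho_{a}\del_{\rho_{a}}$ terms are matched against the $L(0)$ insertions via the grading of the dual basis \eqref{eq:bbar}, and the residual $L(1)$ terms are converted to $\rho_{a}L(-1)$ insertions at $w_{-a}$ via Lemma~\ref{AdjointLemma}, just as in the paper (which works directly with the $\rho_{a}$-adjoint $L^{\dagger}_{\rho_{a}}(1)=\rho_{a}L(-1)$ rather than reinstating the $\rho_{a}^{\wt(b_{a})}$ factors by hand). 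The only cosmetic difference is your explicit treatment of the ``symmetric'' case where $L(1)$ acts on $b_{-a}$, which the paper leaves implicit in its sum over $a\in\I$.
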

\begin{proof}
	Let $\ell=0$ and $u =\omega$ in~\eqref{ZhuPrepLemma} to obtain
	\begin{align*}
	0=\sum_{a\in\I}Z^{(0)}(\ldots;L(-1)b_{a},w_{a};\ldots)=\sum_{a\in\I}\del_{w_{a}}Z^{(0)}(\bm{b,w}),
	\end{align*}
	using  the VOA translation property. Summing over a $V^{\otimes g}$-basis we obtain $\mathcal{L}_{-1}Z_{V}^{(g)}=0$.
	Similarly, with $\ell=1$  we find \eqref{ZhuPrepLemma} gives
	\begin{align*}
	0&=\sum_{a\in\I}\left(  w_{a}Z^{(0)}(\ldots;L(-1)b_{a},w_{a};\ldots)+Z^{(0)}(\ldots;L(0)b_{a},w_{a};\ldots)
	\right)
	\\
	&=\sum_{a\in\I}\left(  w_{a}\del_{w_{a}}+\wt(b_{a}
	\right)Z^{(0)}(\bm{b,w})
	=\sum_{a\in\I}\left(w_{a}\del_{w_{a}}+\rho_{a}\del_{\rho_{a}}\right)Z^{(0)}(\bm{b,w}),
	\end{align*}
	recalling \eqref{eq:bbar}. 
	Thus  on summing over a $V^{\otimes g}$-basis we find $\mathcal{L}_{0}Z_{V}^{(g)}=0$.
	Lastly, for $\ell=2$   we find \eqref{ZhuPrepLemma} implies
	\begin{align*}
	0=&\sum_{a\in\I}\left(
	w_{a}^{2}Z^{(0)}(\ldots;L(-1)b_{a},w_{a};\ldots)+2w_{a}Z^{(0)}(\ldots;L(0)b_{a},w_{a};\ldots)\right.\\&\left.\quad\quad+Z^{(0)}(\ldots;L(1)b_{a},w_{a};\ldots)
	\right),
	\\
	=&\sum_{a\in\I}\left(
	\left(
	w_{a}^{2}\del_{w_{a}}+2w_{a}\rho_{a}\del_{\rho_{a}}\right)Z^{(0)}(\bm{b,w})
	+Z^{(0)}(\ldots;L(1)b_{a},w_{a};\ldots)
	\right).
	\end{align*}
	Summing $b_{a}$ over a $V$-basis and using  Lemma~\ref{AdjointLemma} with  $L^{\dagger}_{\rho_{a}}(1)=\rho_{a}L(-1)$  gives
	\begin{align*}
	&\sum_{{b}_{a}}Z^{(0)}(\ldots;L(1)b_{a},w_{a};\ldots)
	=\sum_{{b}_{a}}\rho_{a}Z^{(0)}(\ldots;L(-1)b_{-a},w_{-a};\ldots)
	\\
	&
	=\rho_{a}\del_{w_{-a}}\sum_{{b}_{a}}Z^{(0)}(\bm{b,w}).
	\end{align*}
	Altogether summing over a $V^{\otimes g}$-basis we obtain $\mathcal{L}_{1}Z_{V}^{(g)}=0$.
\end{proof}
By a similar analysis we find in general that for $n$-point functions:
\begin{proposition}\label{prop:nptSL2CProp}
	$Z_{V}^{(g)}(\bm{v,y})$ is $\SL_{2}(\C)$ covariant as follows:
	\begin{enumerate}[itemsep=6pt]
		\item \quad 
		$\Big(\mathcal{L}_{-1}+	\mathcal{L}_{-1}^{\bm{y}}\Big)Z_{V}^{(g)}(\bm{v,y})
		=0$,
		\item \quad
		$	\Big(
		\mathcal{L}_{0}+\mathcal{L}_{0}^{\bm{y}}-	\sum_{k=1}^{n}\wt(v_{k}) 
		\Big)
		Z_{V}^{(g)}(\bm{v,y})=0$,
		\item \quad
		$\Big(
		\mathcal{L}_{1}+\mathcal{L}_{1}^{\bm{y}}-\sum_{k=1}^{n}2y_{k}\wt(v_{k}) 
	\Big)Z_{V}^{(g)}(\bm{v,y})=
		 	\sum_{k=1}^{n}Z_{V}^{(g)}(\ldots ;L(1)v_{k},y_{k};\ldots)$,
	\end{enumerate}
	\medskip
	\noindent 
	where $\mathcal{L}_{r}^{\bm{y}}=-\sum_{k=1}^{n}y_{k}^{r+1}\del_{{k}}$ for $r=0,\pm 1$.
\end{proposition}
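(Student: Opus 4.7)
The plan is to run the same argument as in the proof of Proposition~\ref{prop:ZSL2C}, but applied to the enlarged genus zero correlator $Z^{(0)}(\bm{v,y};\bm{b,w})$, which has $n+2g$ insertions. Specifically, for each $r\in\{-1,0,1\}$ I take $u=\omega$ and $p(y) = y^{r+1}$ in Lemma~\ref{lem:ZeroSum}. Since $\omega$ is quasiprimary of weight $2$, Lemma~\ref{lem:ZeroSum} applies and yields an identity where the sum now splits into a $\bm{y}$-part (contributing $\mathcal{L}_r^{\bm{y}}$) and a $\bm{w}$-part (contributing $\mathcal{L}_r$ after summing over the $V^{\otimes g}$-basis). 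The VOA translation property $Y(L(-1)u,z)=\partial_zY(u,z)$ converts $\omega(0)=L(-1)$ insertions to derivatives, while $\omega(1)=L(0)$ multiplies by conformal weights on homogeneous vectors.

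For part (1), the choice $p(y)=1$ produces only $L(-1)$ terms, so Lemma~\ref{lem:ZeroSum} gives
\[
\sum_{k=1}^n \partial_{y_k}Z^{(0)}(\bm{v,y};\bm{b,w}) + \sum_{a\in\I}\partial_{w_a}Z^{(0)}(\bm{v,y};\bm{b,w}) = 0,
\]
and summing over a $V^{\otimes g}$-basis yields $(\mathcal{L}_{-1}+\mathcal{L}_{-1}^{\bm{y}})Z_V^{(g)}(\bm{v,y})=0$.

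For part (2), the choice $p(y)=y$ produces both $L(-1)$ and $L(0)$ contributions. Homogeneity of the $v_k$ and $b_a$ converts $L(0)$ insertions to factors $\wt(v_k)$ and $\wt(b_a)$ respectively. After summing over the $V^{\otimes g}$-basis, the $\wt(b_a)$ factors become the operator $\rho_a\partial_{\rho_a}$ by the definition \eqref{eq:bbar}, exactly as in the proof of Proposition~\ref{prop:ZSL2C}. Collecting terms gives
\[
\sum_{k=1}^n\bigl(y_k\partial_{y_k}+\wt(v_k)\bigr)Z_V^{(g)}(\bm{v,y}) + \sum_{a\in\I}\bigl(w_a\partial_{w_a}+\rho_a\partial_{\rho_a}\bigr)Z_V^{(g)}(\bm{v,y})=0,
\]
which rearranges to statement (2).

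Part (3) is the main obstacle and requires the trick already employed in Proposition~\ref{prop:ZSL2C}. Taking $p(y)=y^2$, the identity produces $L(-1),L(0)$ and $L(1)$ insertions. The $L(1)v_k$ insertions on the $\bm{v}$-side cannot be simplified and remain on the right hand side. On the $\bm{w}$-side, however, I use Lemma~\ref{AdjointLemma} with $L^{\dagger}_{\rho_a}(1)=\rho_a L(-1)$ to convert
\[
\sum_{b_a}Z^{(0)}(\ldots;L(1)b_a,w_a;\ldots) = \rho_a\,\partial_{w_{-a}}\sum_{b_a}Z^{(0)}(\bm{v,y};\bm{b,w}),
\]
and, reindexing $a\mapsto -a$ in the sum over $\I$ and using $\rho_{-a}=\rho_a$, this combines with the $w_a^2\partial_{w_a}$ term to produce precisely the $(w_a^2+\rho_a)\partial_{w_a}$ coefficient appearing in $\mathcal{L}_1$. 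The $L(0)b_a$ contributions yield the $2w_a\rho_a\partial_{\rho_a}$ part of $\mathcal{L}_1$ via \eqref{eq:bbar}, while the $L(0)v_k$ contributions give $2y_k\wt(v_k)$. Assembling everything yields statement (3). The only subtlety worth pausing on is the reindexing of the limit set $\I$ required to merge the $\rho_a\partial_{w_{-a}}$ and $w_a^2\partial_{w_a}$ summands; everything else is a direct transcription of the partition function argument augmented by the extra $\bm{y}$-insertions.
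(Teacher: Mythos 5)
Your proof is correct and is precisely the ``similar analysis'' that the paper alludes to after Proposition~\ref{prop:ZSL2C}: you apply Lemma~\ref{lem:ZeroSum} with $u=\omega$ and $p(y)=y^{r+1}$ to the $(n+2g)$-point genus zero correlator, convert $L(-1)$ and $L(0)$ insertions via translation and homogeneity, and dispose of the $L(1)b_{a}$ terms through Lemma~\ref{AdjointLemma} and the relabelling $a\mapsto -a$ with $\rho_{-a}=\rho_{a}$, exactly as in the partition function case. The only slip is terminological: $\I=\{\pm 1,\ldots,\pm g\}$ is the index set for the Schottky generators, not the limit set $\Lambda(\Gamma)$.
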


\begin{example}
For  $g=1$ for quasiprimary $v_{1},\ldots ,v_{n}$ then Theorem~\ref{theor:ZZhu} implies
\begin{align*}
Z_{V}^{(1)}(\bm{v,y})=Z^{\Zhu}_{V}(\bm{v,\zeta};q)\bm{ \zeta'^{\wt(v)}},
\end{align*}
where $ \bm{ \zeta'^{\,\wt(v)}}
=\prod_{k=1}^{n}\zeta_{k}'^{\,\wt(v_{k})}$ for $\zeta_{k}'= \frac{1}{y_{k}-W_{-1}}-\frac{1}{y_{k}-W_{1}} $.
It is easy to check Proposition~\ref{prop:nptSL2CProp}~(1) and (2) hold for $Z_{V}^{(1)}(\bm{v,y})$. Part (3) follows from the identities
\begin{align*}
\left(
\mathcal{L}_{1}+\mathcal{L}_{1}^{\bm{y}}
\right)\zeta_{k} =W_{1}-W_{-1},\quad 
\left(\mathcal{L}_{1}+\mathcal{L}_{1}^{\bm{y}}
\right)\zeta_{k}'=2y_{k}\zeta_{k}',
\end{align*} 
together with genus one translation symmetry  where  from \eqref{eq:Zhunpt} and for fixed  $\alpha$
\begin{align*}
Z^{\Zhu}_{V}(\ldots,\zeta_{k}+\alpha,v_{k},\ldots;q)=&
\Tr_{V} Y\left(
e^{ \wt(v_{1})(\zeta_{1}+\alpha)}v_{1},e^{\zeta_{1}+\alpha}\right)
\ldots  
Y\left(e^{ \wt(v_{n})(\zeta_{n}+\alpha)}v_{n},e^{\zeta_{n}+\alpha}\right)q^{L(0)}
\\
=&
 \Tr_{V} e^{\alpha L(0)}Y\left(e^{ \wt(v_{1})\zeta_{1}}v_{1},e^{\zeta_{1}}\right)\ldots  
 Y\left(e^{\wt(v_{n})\zeta_{n} }v_{n},e^{\zeta_{n}}\right)q^{L(0)}e^{-\alpha L(0)}
	\\
=	&
Z^{\Zhu}_{V}(\bm{v,\zeta};q).
\end{align*}
\end{example}	

\begin{remark}\label{rem:sl2C}
We may encapsulate Proposition~\ref{prop:nptSL2CProp}~(1)--(3) into one expression as follows. Let $p(z)\in \Pi_{2}(z)$, the space of degree $2$ polynomials in $z$, and define  
\begin{align}\label{eq:calLdef}
\mathcal{L}_{p}^{\Cg}:=-\sum_{a\in\I}p(W_{a})\del_{W_{a}},
\quad 
\mathcal{L}_{p}^{\Cgn}:=\mathcal{L}_{p}^{\Cg}-\sum_{k=1}^{n}p(y_{k})\del_{{k}}.
\end{align}
Clearly $\mathcal{L}_{z^{r+1}}^{\Cg}=\mathcal{L}_{r}$ and $\mathcal{L}_{z^{r+1}}^{\Cgn}=\mathcal{L}_{r}+\mathcal{L}_{r}^{\bm{y}}$ for $r=-1,0,1$. Thus $\mathcal{L}_{p}^{\Cg}$ and $\mathcal{L}_{p}^{\Cgn}$ generate $\slLie_{2}(\C)$ and Proposition~\ref{prop:nptSL2CProp}~(1)--(3) can be rewritten as
\begin{align}\label{eq:Lpnpt}
\left(\mathcal{L}_{p}^{\Cgn}-\sum_{k=1}^{n}
\del_{{k}} p(y_{k})\,\wt(v_{k}) \right)Z_{V}^{(g)}(\bm{v,y})
=\del_{{k}}^{(2)} p (y_{k})Z_{V}^{(g)}(\ldots ;L(1)v_{k},y_{k};\ldots).
\end{align}
\end{remark}

\subsection{Genus $g$ Zhu recursion}
Consider $Z_{V}^{(g)}(u,x;\bm{v,y})$ for quasiprimary $u$ of weight $N$. We derive a Zhu reduction formula given in Theorem~\ref{theor:ZhuGenusg}  generalizing Theorem~\ref{theor:GenusZeroZhu} to genus $g$. 
We firstly apply Theorem~\ref{theor:GenusZeroZhu} to  the summands of \eqref{GenusgnPoint} to find
\begin{align}\label{GenusgZhu1}
Z_{V}^{(g)}(u,x;\bm{v,y})\notag
&=\sum_{a\in\I}\sum_{j\ge 0}\del^{(0,j)}\psi_{N}^{(0)}(x,w_{a})\sum_{\bm{b}_{+}}Z^{(0)}(\ldots ;u(j)b_{a},w_{a};\ldots)\notag
\\
&\quad+\sum_{k=1}^{n}\sum_{j\ge 0}\del^{(0,j)}\psi_{N}^{(0)}(x,y_{k})\sum_{\bm{b}_{+}}Z^{(0)}(\ldots ;u(j)v_{k},y_{k};\ldots).
\end{align}
Define a column vector $X=(X_{a}(m))$ indexed by $ m\ge 0$ and $ a\in\I$  with components 
\begin{align}\label{XamDef}
X_{a}(m):=\rho_{a}^{-\frac{m}{2}}\sum_{\bm{b}_{+}}Z^{(0)}(\ldots;u(m)b_{a},w_{a};\ldots),
\end{align}
and  a row vector $p(x)=(p_{a}(x,m))$  for $m\ge 0, a\in\I$  with components 
\begin{align}\label{eq:pdef}
p_{a}(x,m):=\rho_{a}^{\frac{m}{2}}\del^{(0,m)}\psi_{N}^{(0)}(x,w_{a}).
\end{align}
Then the genus zero Zhu recursion formula~\eqref{GenusgZhu1} can be rewritten as
\begin{align}\label{ZhuGenusZero}
Z_{V}^{(g)}(u,x;\bm{v,y})=p(x)X+\sum_{k=1}^{n}\sum_{j\ge 0}\del^{(0,j)}\psi_{N}^{(0)}(x,y_{k})Z_{V}^{(g)}(\ldots;u(j)v_{k},y_{k};\ldots).
\end{align}
We next develop a recursive formula for $X$ following the genus two  approach in ~\cite{GT}.
We apply Lemma~\ref{AdjointLemma} to the summands of $X_{a}(m)$ 
with adjoint \eqref{RhoAdjoint} to find
\begin{align}\label{XaAdjoint}
X_{a}(m)=(-1)^{N}\rho_{a}^{\frac{m}{2}-N+1}\sum_{\bm{b}_{+}}Z^{(0)}(\ldots;u(2N-2-m)b_{-a},w_{-a};\ldots).
\end{align}
In particular, \eqref{XaAdjoint} implies that for $0\le \ell \le 2N-2$ then 
\begin{align}\label{XaRelationGamma}
X_{a}(\ell)=(-1)^{N}X_{-a}(2N-2-\ell).
\end{align}
Let $t=m-2N+1\ge 0$ and use Theorem~\ref{GenusZeroZhu3} and Remark~\ref{rem:Zhu genus zero II} so that \eqref{XaAdjoint} implies
\begin{align*}
X_{a}(m)&=(-1)^{N}\rho_{a}^{\frac{t+1}{2}}\sum_{\bm{b}_{+}}Z^{(0)}(\ldots;u(-t-1)b_{-a},w_{-a};\ldots)
\\
&=(-1)^{N}\rho_{a}^{\frac{t+1}{2}} \sum_{k=1}^{n}\sum_{j\ge 0}\partial^{(t,j)}\psi_{N}^{(0)}(w_{-a},y_{k})Z^{(0)}(\ldots;u(j)v_{k},y_{k};\ldots)
\\
&\quad+(-1)^{N}\rho_{a}^{\frac{t+1}{2}} \sum_{\substack{b\in\I,\\b\neq -a}}\sum_{j\ge 0}\partial^{(t,j)}\psi_{N}^{(0)}(w_{-a},w_{b})Z^{(g)}_{V}(\ldots;u(j)b_{b},w_{b};\ldots)
\\
&\quad+(-1)^{N}\rho_{a}^{\frac{t+1}{2}} \sum_{j\ge 0}\E_{t}^{j}(w_{-a})Z^{(g)}_{V}(\ldots;u(j)b_{-a},w_{-a};\ldots).
\end{align*}
We can rewrite this as
\begin{align}\label{XaRelation}
X_{a}(m)=(G+RX)_{a}(m-2N+1),\quad m\ge 2N-1, 
\end{align}
with column vector $G=(G_{a}(m))$ for $m\ge 0, a\in\I$ given by 
\begin{align*}
G:=\sum_{k=1}^{n}\sum_{j\ge 0}\del_{k}^{(j)}q(y_{k})Z_{V}^{(g)}(\ldots;u(j)v_{k},y_{k};\ldots),
\end{align*}
where $q(y)=(q_{a}(y;m))$ for $m\ge 0, a\in\I$ is a column vector with components 
\begin{align}
\label{eq:qdef}
q_{a}(y;m):=(-1)^{N}\rho_{a}^{\frac{m+1}{2}}\del^{(m,0)}\psi_{N}^{(0)}(w_{-a},y),
\end{align}
and $R=(R_{ab}(m,n))$ for $m,n\ge 0$ and $ a,b\in\I$ is a doubly indexed matrix with components 
\begin{align}
R_{ab}(m,n):=\begin{cases}(-1)^{N}\rho_{a}^{\frac{m+1}{2}}\rho_{b}^{\frac{n}{2}}\del^{(m,n)}\psi_{N}^{(0)}(w_{-a},w_{b}),&a\neq-b,\\ 
(-1)^{N}\rho_{a}^{\frac{m+n+1}{2}}\E_{m}^{n}(w_{-a}),&a=-b.
\end{cases}
\label{eq:Rdef}
\end{align}
We note from \eqref{eq:Ejt} that $R_{ab}(m,n)=0$ for $a=-b$ for all $n\ge 2N-1$.

Define the doubly indexed matrix $\Delta=(\Delta_{ab}(m,n))$ by
\begin{align}
\Delta_{ab}(m,n):=\delta_{m,n+2N-1}\delta_{ab}.\label {eq:Deltadef}
\end{align}
Note that $\Delta$ satisfies the identities
\begin{align}\label{DeltaDelta}
\Delta^{T}\Delta =I,\quad  
\Delta\Delta^{T}=I-\Pi,
\end{align}
for projection matrix $\Pi=(\Pi_{ab}(m,n))$ with components
\begin{align*}
\Pi_{ab}(m,n):=\begin{cases}
\delta_{ab}\delta_{mn}& \mbox{ for } 0\le m,n\le 2N-2,\\
0 & \mbox{ otherwise.}
\end{cases}
\end{align*}
Equation~\eqref{XaRelation} can thus be written
\begin{align*}
X_{a}(m)=(\Delta(G+RX))_{a}(m),\quad m\ge 2N-1.
\end{align*}
Next  define
\begin{align*}
X^{\perp}:=\Delta^{T}X,\quad  
X^{\Pi}:=\Pi X.
\end{align*}
Using~\eqref{DeltaDelta} we have
\begin{align*}
X^{\perp}&=G+RX.
\end{align*}
We can use the relation
\begin{align*}
X=(\Pi+(I-\Pi))X=X^{\Pi}+\Delta X^{\perp},
\end{align*}
to obtain
\begin{align*}
X^{\perp}&=G+R\left(X^{\Pi}+\Delta X^{\perp}\right) =G+RX^{\Pi}+\widetilde{R}X^{\perp},
\end{align*}where $\widetilde{R}=R\Delta$. Formally solving for $X^{\perp}$ gives:
\begin{align*}
X^{\perp}=\left(I-\widetilde{R}\right)^{-1}RX^{\Pi}+\left(I-\widetilde{R}\right)^{-1}G,
\end{align*}
where the formal inverse $(I-\widetilde{R})^{-1}$ is given by 
\begin{align}
\label{eq:ImRinverse}
\left(I-\widetilde{R}\right)^{-1}=\sum_{k\ge 0}\widetilde{R}^{\,k}.
\end{align}
Altogether we have found 
\begin{lemma}\label{XSub}
Let $u$ be a quasiprimary vector with $\wt(u) = N$. Then
\begin{align*}
X=X^{\Pi}+\Delta \left(I-\widetilde{R}\right)^{-1}RX^{\Pi}+\Delta\left(I-\widetilde{R}\right)^{-1}G.
\end{align*}
\end{lemma}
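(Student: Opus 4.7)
The plan is to assemble the ingredients laid out in the paragraphs preceding the statement into one formal algebraic manipulation; no genuinely new VOA input is required, since the component relation \eqref{XaRelation} already encodes the combined use of Theorem \ref{GenusZeroZhu3}, Remark \ref{rem:Zhu genus zero II}, and the Li-Z adjoint identity \eqref{XaAdjoint}.

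First I would record the orthogonal-type decomposition
\[
X \;=\; \Pi X + (I-\Pi)X \;=\; X^\Pi + \Delta\Delta^T X \;=\; X^\Pi + \Delta X^\perp,
\]
using $\Delta\Delta^T = I-\Pi$ from \eqref{DeltaDelta}. Here $X^\perp := \Delta^T X$ has components $X^\perp_a(n) = X_a(n+2N-1)$ for $n\ge 0$, $a\in\I$, which is exactly the range of $m$ in which \eqref{XaRelation} applies.

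Next I would reinterpret \eqref{XaRelation} as the single vector identity $X^\perp = G + RX$ by setting $m = n+2N-1$ in the componentwise equation $X_a(m) = (G+RX)_a(m-2N+1)$. Substituting the decomposition $X = X^\Pi + \Delta X^\perp$ into the right-hand side gives
\[
X^\perp \;=\; G + R X^\Pi + R\Delta\, X^\perp \;=\; G + R X^\Pi + \widetilde R\,X^\perp,
\]
with $\widetilde R := R\Delta$. Solving formally via the geometric series \eqref{eq:ImRinverse} yields $X^\perp = (I-\widetilde R)^{-1}(G + R X^\Pi)$, and re-inserting into $X = X^\Pi + \Delta X^\perp$ produces the claimed identity.

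The only point that will demand care is the well-definedness of $(I-\widetilde R)^{-1}$. The factor $\Delta$ inside $\widetilde R$ shifts the second component index by $2N-1$, so at any fixed output index $(a,m)$ only finitely many powers of $\widetilde R$ can contribute to the portion acting on $X^\Pi$ (which is supported on $0\le n\le 2N-2$); the remainder is absorbed into the formal power series structure in $w_a$ and $\rho_a$ that already makes $R$ itself meaningful. The main obstacle is therefore notational — keeping the doubly indexed block bookkeeping straight — rather than anything genuinely analytic, and the whole argument is a few lines of formal linear algebra.
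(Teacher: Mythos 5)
Your proposal is correct and follows essentially the same route as the paper: the paper likewise defines $X^{\perp}=\Delta^{T}X$, $X^{\Pi}=\Pi X$, rewrites \eqref{XaRelation} as $X^{\perp}=G+RX$ via \eqref{DeltaDelta}, substitutes $X=X^{\Pi}+\Delta X^{\perp}$ to get $X^{\perp}=G+RX^{\Pi}+\widetilde{R}X^{\perp}$, and solves formally using the geometric series \eqref{eq:ImRinverse}. The paper treats the invertibility of $I-\widetilde{R}$ purely formally (convergence is deferred to Section~5), so no further justification is needed at this stage.
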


\medskip
Recalling the preliminary Zhu recursion formula~\eqref{ZhuGenusZero}, we can substitute for $X$ using Lemma~\ref{XSub} to obtain the recursion formula 
\begin{align}\label{GenusgZhu}
&Z_{V}^{(g)}(u,x;\bm{v,y})=\chi(x)o(u;\bm{v,y})+\sum_{k=1}^{n}\sum_{j\ge 0}\del^{(0,j)}\psi_{N}(x,y_{k})Z_{V}^{(g)}(\ldots;u(j)v_{k},y_{k};\ldots),
\end{align}
where $\chi(x)=(\chi_{a}(x;\ell))$ and $o(u;\bm{v,y})=(o_{a}(u;\bm{v,y};\ell))$ are \emph{finite} row and column vectors indexed by 
$a\in\I$, $0\le \ell\le 2N-2$ with
\begin{align}
\label{eq:chiadef}
\chi_{a}(x;\ell)&:=\rho_{a}^{-\frac{\ell}{2}}(p(x)+\widetilde{p}(x)(I-\widetilde{R})^{-1}R)_{a}(\ell),
\\
\label{LittleODef}
o_{a}(\ell)&:=o_{a}(u;\bm{v,y};\ell)=\rho_{a}^{\frac{\ell}{2}}X_{a}(\ell),
\end{align}
and where $\widetilde{p}(x)=p(x)\Delta$ and $\psi_{N}(x,y)$ is defined by
\begin{align}
\label{eq:psilittleN}
\psi_{N}(x,y):=\psi_{N}^{(0)}(x,y)+\widetilde{p}(x)(I-\widetilde{R})^{-1}q(y).
\end{align}

The relation \eqref{XaRelationGamma} implies that $
o_{a}(\ell) = (-1)^{N} \rho_{a}^{\ell+1-N}o_{-a}(2N-2-\ell)$ so that  
\begin{align*}
\chi(x)o(u;\bm{v,y})= \sum_{a=1}^{g} \theta_{a}(x)o_{a}(u;\bm{v,y}),
\end{align*}
where for each $a \in\Ip$ we define a vector  $\theta_{a}(x)=(\theta_{a}(x;\ell) )$ indexed by $0\le \ell\le  2N-2$ with components
\begin{align}\label{eq:thetadef}
\theta_{a}(x;\ell) := \chi_{a}(x;\ell)+(-1)^{N }\rho_{a}^{N-1-\ell}\chi_{-a}(x;2N-2-\ell).
\end{align}

Now define the following vectors of formal differential forms
\begin{align}
\label{eq:ThetaPQdef}
\quad P(x):=p(x)dx^{N},\quad Q(y):=q(y)dy^{1-N},
\end{align}
with $\widetilde{P}(x)=P(x)\Delta$. Then with $\Psi_{N} (x,y):=\psi_{N}(x,y)dx^{N}dy^{1-N}$ we have
\begin{align}\label{GenusgPsiDef}
\Psi_{N}(x,y)=\Psi_{N}^{(0)}(x,y)+\widetilde{P}(x)(I-\widetilde{R})^{-1}Q(y).
\end{align}
Defining   $\Theta_{a}(x;\ell):=\theta_{a}(x;\ell)dx^{N}$  and $O_{a}(u; \bm{v,y};\ell) := o_{a}(u; \bm{v,y};\ell) \bm{dy^{\wt(v)}
	}$ we obtain our main result:
\begin{theorem}[Quasiprimary Genus $g$ Zhu Recursion]\label{theor:ZhuGenusg}
The genus $g$ $n$-point formal differential for a quasiprimary vector $u$ of weight $\wt(u)=N$ inserted at $x $ and general vectors $v_{1},\ldots,v_{n}$ inserted at $y_{1},\ldots,y_{n} $ respectively, satisfies the recursive identity
\begin{align}
\label{eq:ZhuGenusg}
\F_{V}^{(g)}(u,x;\bm{v,y})&=\sum_{a=1}^{g} \Theta_{a}(x)O_{a}(u;\bm{v,y})
\notag
\\
&\quad+\sum_{k=1}^{n}\sum_{j\ge 0}\del^{(0,j)}\Psi_{N}(x,y_{k})dy_{k}^{j}\,\F_{V}^{(g)}(\ldots;u(j)v_{k},y_{k};\ldots).
\end{align}
\end{theorem}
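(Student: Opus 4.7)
The plan is to derive \eqref{eq:ZhuGenusg} by applying the genus zero Zhu recursion of Theorem~\ref{theor:GenusZeroZhu} twice: first to pull the quasiprimary insertion $Y(u,x)$ past the $2g$ basis insertions at $w_{\pm a}$ and the $n$ external insertions at $y_k$ in the summands of \eqref{GenusgnPoint}, and then iteratively (via the $\rho_a$-adjoint of $u(m)$) to resolve the basis-insertion contributions into a self-consistent linear system. The remaining ingredients are pure bookkeeping: projecting off the finite ``resonant band'' $0\le m\le 2N-2$, inverting a geometric series, and repackaging scalars into differential forms.

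First I would apply Theorem~\ref{theor:GenusZeroZhu} inside each summand of \eqref{GenusgnPoint} and sum over $\{\bm{b}_+\}$; this gives \eqref{GenusgZhu1}, which in the notation \eqref{XamDef}, \eqref{eq:pdef} is the preliminary identity \eqref{ZhuGenusZero}. Next I would produce a closed recursion for the column vector $X$ itself. Using Lemma~\ref{AdjointLemma} with the $\rho_a$-scaled adjoint \eqref{RhoAdjoint} passes $u(m)$ across the Li--Z pairing at the $a$-th sewing: for $0\le m\le 2N-2$ this yields the symmetry \eqref{XaRelationGamma}; for $m\ge 2N-1$ it produces a $u(-t-1)$ insertion at $w_{-a}$ with $t=m-2N+1\ge 0$, and Theorem~\ref{GenusZeroZhu3} together with Remark~\ref{rem:Zhu genus zero II} then converts this into the linear recursion \eqref{XaRelation} with $G$, $R$ defined by \eqref{eq:qdef}, \eqref{eq:Rdef}.

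I would then introduce the shift $\Delta$ and its complementary projection $\Pi$ from \eqref{eq:Deltadef} and decompose $X = X^\Pi + \Delta X^\perp$. Using \eqref{DeltaDelta}, the recursion becomes $X^\perp = G + RX^\Pi + \widetilde{R}X^\perp$ with $\widetilde{R} = R\Delta$, which formally inverts via the geometric series \eqref{eq:ImRinverse} to give Lemma~\ref{XSub}. Substituting this into \eqref{ZhuGenusZero}, the $X^\Pi$ part contributes $\chi(x)\,o(u;\bm{v,y})$ with $\chi$ and $o$ as in \eqref{eq:chiadef}, \eqref{LittleODef}, while the $G$ part merges with the existing $y_k$-terms and dresses the kernel $\psi_N^{(0)}$ into the genus $g$ kernel $\psi_N$ of \eqref{eq:psilittleN}. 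The symmetry \eqref{XaRelationGamma} then pairs the $\pm a$ components of $\chi(x)o$ and collapses the sum over $a\in\I$ to a sum over $a\in\Ip$ with coefficient $\theta_a(x)$ as in \eqref{eq:thetadef}. Finally, multiplying through by $dx^N$ and $\bm{dy^{\wt(v)}}$ and bundling each $\del^{(0,j)}\psi_N(x,y_k)$ with $dy_k^j$ converts scalars into the advertised forms $\Psi_N$, $\Theta_a$, $O_a$, $\F_V^{(g)}$ and delivers \eqref{eq:ZhuGenusg}.

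The main obstacle is justifying, at this stage of the paper, that the formal inverse $(I-\widetilde{R})^{-1} = \sum_{k\ge 0}\widetilde{R}^{\,k}$ is well-defined. From \eqref{eq:Rdef} together with $\widetilde{R} = R\Delta$, every entry of $\widetilde{R}$ carries explicit factors $\rho_a^{1/2}\rho_b^{1/2}$, so each factor of $\widetilde{R}$ strictly raises the total degree in the sewing parameters $\rho_a^{1/2}$. Consequently the coefficient of any fixed monomial in the $\rho_a^{1/2}$'s appearing in $\sum_{k\ge 0}\widetilde{R}^{\,k}$ is a finite sum of products of derivatives of $\psi_N^{(0)}$ evaluated at distinct $w_{\pm a}$, which suffices for a formal-series interpretation of \eqref{eq:psilittleN} and of the coefficients $\chi$, $\theta_a$. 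Genuine meromorphic convergence on $\Sg$ is a separate question to be addressed in Section~5.
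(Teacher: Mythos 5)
Your proposal follows essentially the same route as the paper's own derivation: genus zero Zhu recursion applied to the summands of \eqref{GenusgnPoint} to get \eqref{ZhuGenusZero}, the adjoint Lemma~\ref{AdjointLemma} plus Theorem~\ref{GenusZeroZhu3} to produce the symmetry \eqref{XaRelationGamma} and the linear system \eqref{XaRelation}, the $\Delta$/$\Pi$ decomposition and formal geometric inverse giving Lemma~\ref{XSub}, and the final repackaging into $\Theta_a$, $O_a$ and $\Psi_N$. Your closing observation that every entry of $\widetilde{R}$ carries positive powers of $\rho_a^{1/2}$, so that $\sum_{k\ge 0}\widetilde{R}^{\,k}$ is well defined coefficientwise as a formal series, is a correct and slightly more explicit justification of the ``formal inverse'' \eqref{eq:ImRinverse} than the paper records, with genuine convergence rightly deferred to Section~5.
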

\begin{remark}
	\label{rem:MainTheorem}
The $\Theta_{a}(x),\Psi_{N}(x,y )$ coefficients depend on $N=\wt(u)$ but  are otherwise independent of the VOA $V$. As such, they are the analogue of the genus zero $\Psi_{N}^{(0)}(x,y )$ coefficients and the genus one Weierstrass coefficients found in~\cite{Z}. We note that for a 1-point function, \eqref{eq:ZhuGenusg} implies
\begin{align}
\label{eq:1ptfun}
\F_{V}^{(g)}(u,x)&=\sum_{a=1}^{g} \Theta_{a}(x)O_{a}(u).
\end{align}
\end{remark}

Much as for Corollary~\ref{cor:GenGenusZeroZhu} we may generalise Theorem~\ref{theor:ZhuGenusg} to find:
\begin{corollary}[General Genus $g$ Zhu Recursion]\label{cor:GenGenusgZhu}
The genus $g$ formal $n$-point  differential  for a quasiprimary  descendant $\tfrac{1}{i!}L(-1)^{i}u$  of $u$ of weight $N$ inserted at $x $ and general vectors $v_{1},\ldots,v_{n}$ inserted at $y_{1},\ldots,y_{n} $ respectively, satisfies the recursive identity
\begin{align*}
&\F_{V}^{(g)}(\tfrac{1}{i!}L(-1)^{i}u,x;\bm{v,y})
\notag
\\
&=
\sum_{a=1}^{g} \partial^{(i)}\Theta_{a}(x)dx^{i}\,O_{a}(u;\bm{v,y})+\sum_{k=1}^{n}\sum_{j\ge 0}\del^{(i,j)}\Psi_{N}(x,y_{k})\,dx^{i}\,dy_{k}^{j}\F_{V}^{(g)}(\ldots;u(j)v_{k},y_{k};\ldots).
\end{align*}
\end{corollary}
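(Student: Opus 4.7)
My plan is to derive Corollary~\ref{cor:GenGenusgZhu} from the quasiprimary genus $g$ Zhu recursion (Theorem~\ref{theor:ZhuGenusg}) using the VOA translation property
\begin{align*}
Y\bigl(\tfrac{1}{i!} L(-1)^i u, x\bigr) = \del_x^{(i)} Y(u, x),
\end{align*}
exactly as Corollary~\ref{cor:GenGenusZeroZhu} was obtained from Theorem~\ref{theor:GenusZeroZhu} at genus zero. Applied term by term inside the definition~\eqref{GenusgnPoint}, this identity yields $Z_V^{(g)}(\tfrac{1}{i!} L(-1)^i u, x; \bm{v, y}) = \del_x^{(i)} Z_V^{(g)}(u, x; \bm{v, y})$, since differentiation in $x$ commutes with the formal sum over the $V^{\otimes g}$-basis $\{\bm{b}_+\}$ and with the vertex operator insertions at the other points $y_k$ and $w_{\pm a}$.

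The key step is then to apply $\del_x^{(i)}$ to the scalar form~\eqref{GenusgZhu} of Theorem~\ref{theor:ZhuGenusg}. Here I would observe that $o_a(u; \bm{v, y}; \ell)$ carries no $x$-dependence, as its defining formulas~\eqref{LittleODef} and~\eqref{XamDef} involve only the modes $u(\ell)$, the basis vectors $b_a$ inserted at $w_a$, and the fixed insertions $(v_k, y_k)$; likewise the correlators $Z_V^{(g)}(\ldots; u(j) v_k, y_k; \ldots)$ on the right are $x$-independent. Hence $\del_x^{(i)}$ passes through and hits only the universal coefficients $\theta_a(x; \ell)$ and $\psi_N(x, y_k)$, producing a $\del_x^{(i)}\theta_a$ term and a $\del^{(i,j)}\psi_N$ term respectively.

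Finally, I would repackage the resulting scalar identity as a differential form identity by multiplying through by $dx^{N+i} \bm{dy^{\wt(v)}}$, noting that $N + i = \wt(\tfrac{1}{i!} L(-1)^i u)$ so the left side assembles into $\F_V^{(g)}(\tfrac{1}{i!} L(-1)^i u, x; \bm{v, y})$. Using $\Theta_a(x) = \theta_a(x) dx^N$ and $\Psi_N(x, y) = \psi_N(x, y)\, dx^N dy^{1-N}$, one has $\del_x^{(i)} \theta_a\, dx^{N+i} = \del^{(i)} \Theta_a\, dx^i$ and $\del_x^{(i)} \del_{y_k}^{(j)} \psi_N\, dx^{N+i} dy_k^{1-N+j} = \del^{(i,j)} \Psi_N\, dx^i dy_k^j$, while the remaining $dy_k^{\wt(u(j) v_k)} = dy_k^{N + \wt(v_k) - j - 1}$ factors recombine with the correlators to form $\F_V^{(g)}(\ldots; u(j) v_k, y_k; \ldots)$.

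No deep obstacle is expected: the derivation is a formal $i$-fold differentiation of an already-established identity combined with the translation property. The only item that requires care is the differential-form weight bookkeeping in the final step, namely verifying that the total $dx$-weight equals $N + i$ and the $dy_k$-weight equals $\wt(v_k)$ in every term on the right hand side.
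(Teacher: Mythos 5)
Your proposal is correct and follows exactly the route the paper intends: the paper states the corollary with only the remark "Much as for Corollary~\ref{cor:GenGenusZeroZhu} we may generalise Theorem~\ref{theor:ZhuGenusg}," i.e.\ apply $Y\bigl(\tfrac{1}{i!}L(-1)^{i}u,x\bigr)=\del_{x}^{(i)}Y(u,x)$ inside the defining sum, note that only $\theta_{a}(x;\ell)$ and $\psi_{N}(x,y_{k})$ carry $x$-dependence, and then repackage as forms. Your weight bookkeeping ($dx$-weight $N+i=\wt(\tfrac{1}{i!}L(-1)^{i}u)$ and $dy_{k}^{1-N+j}\cdot dy_{k}^{N+\wt(v_{k})-j-1}=dy_{k}^{\wt(v_{k})}$) checks out.
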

Lastly, we may describe a version of Theorem~\ref{theor:ZhuGenusg} for $U$-modules of a subVOA $U\subset V$. Let $V=\bigoplus_{\alpha\in A}\mathcal{W}_{\alpha}$ be the $U$-module decomposition  for  modules $\mathcal{W}_{\alpha}$ with some indexing set $A$. Let $\mathcal{W}_{\bm{\alpha}}=\bigotimes_{a=1}^{g} \mathcal{W}_{\alpha_{a}}$ denote a tensor product  as in \eqref{eq:Z_Walpha} of $g$ of these modules. Then by Remark~\ref{rem:adjoint} we find
\begin{theorem}\label{theor:ZhuGenusgModules}
The  $(n+1)$-point differential form $\F_{\mathcal{W}_{\bm{\alpha}}}^{(g)}(u,x;\bm{v,y})$   for a quasiprimary vector $u\in U$ of weight $N$ inserted at $x $ and $v_{1},\ldots,v_{n}\in V$ inserted at $y_{1},\ldots,y_{n} $ respectively, satisfies the recursive identity
\begin{align}\label{ZhuGenusgModules}
\F_{\mathcal{W}_{\bm{\alpha}}}^{(g)}(u,x;\bm{v,y})&
=\sum_{a=1}^{g} \Theta_{a}(x)O^{\mathcal{W}_{\bm{\alpha}}}_{a}(u;\bm{v,y})
\notag
\\
&\quad+\sum_{k=1}^{n}\sum_{j\ge 0}\del^{(0,j)}\Psi_{N}(x,y_{k})\,dy_{k}^{j} \F_{\mathcal{W}_{\bm{\alpha}}}^{(g)}(\ldots;u(j)v_{k},y_{k};\ldots).
\end{align}
\end{theorem}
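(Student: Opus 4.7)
The plan is to mimic the proof of Theorem~\ref{theor:ZhuGenusg} almost verbatim, with the sums over $V^{\otimes g}$-bases replaced by sums over bases $\{\bm{b}_{+}\}$ of $\mathcal{W}_{\bm{\alpha}}$, and to verify at each stage that every step that previously relied on properties of $V$ continues to work when the basis is restricted to the $U$-module $\mathcal{W}_{\bm{\alpha}}$. Since $u\in U$, the mode $u(j)$ preserves each $\mathcal{W}_{\alpha}$, so applying $u(j)$ to a basis vector of $\mathcal{W}_{\bm{\alpha}}$ stays inside $\mathcal{W}_{\bm{\alpha}}$; this is what makes the restriction consistent.

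First, I would apply the genus zero Zhu recursion of Theorem~\ref{theor:GenusZeroZhu} to each genus zero $(n+2g+1)$-point function $Z^{(0)}(u,x;\bm{v,y};\bm{b,w})$ appearing in the summands of the defining formula~\eqref{eq:Z_Walpha}. This is a purely genus zero statement, independent of whether $\bm{b}_{+}$ ranges over $V^{\otimes g}$ or over $\mathcal{W}_{\bm{\alpha}}$, so after summing over a $\mathcal{W}_{\bm{\alpha}}$-basis one obtains the module analogue of~\eqref{GenusgZhu1} (and hence of~\eqref{ZhuGenusZero}), namely
\begin{align*}
Z_{\mathcal{W}_{\bm{\alpha}}}^{(g)}(u,x;\bm{v,y})=p(x)X^{\mathcal{W}_{\bm{\alpha}}}+\sum_{k=1}^{n}\sum_{j\ge 0}\del^{(0,j)}\psi_{N}^{(0)}(x,y_{k})Z_{\mathcal{W}_{\bm{\alpha}}}^{(g)}(\ldots;u(j)v_{k},y_{k};\ldots),
\end{align*}
where $X^{\mathcal{W}_{\bm{\alpha}}}_{a}(m):=\rho_{a}^{-m/2}\sum_{\bm{b}_{+}\in\mathcal{W}_{\bm{\alpha}}} Z^{(0)}(\ldots;u(m)b_{a},w_{a};\ldots)$ and $p(x)$ is exactly the universal row vector of~\eqref{eq:pdef}.

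The crucial step, and the only place where the module structure actually enters, is the derivation of the recursive relation \eqref{XaRelation} for $X^{\mathcal{W}_{\bm{\alpha}}}$. In the original proof this uses Lemma~\ref{AdjointLemma} together with the adjoint formula \eqref{RhoAdjoint} to rewrite each $u(m)b_{a}$ insertion as a $u^{\dagger}_{\rho_{a}}(m)\overline{b_{a}}$ insertion at the dual point $w_{-a}$. Here I would invoke Remark~\ref{rem:adjoint}, which states precisely that this adjoint interchange is valid for $u\in U$ acting on a homogeneous $U$-module basis of $\mathcal{W}_{\alpha_{a}}$ with its dual basis. This is the main obstacle, and the reason the hypothesis $u\in U$ cannot be weakened: for $u\notin U$, the modes $u(m)$ need not preserve $\mathcal{W}_{\bm{\alpha}}$ and the restricted sums would not close. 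Granted Remark~\ref{rem:adjoint}, we obtain the identical relation $X^{\mathcal{W}_{\bm{\alpha}}}_{a}(m)=(G^{\mathcal{W}_{\bm{\alpha}}}+RX^{\mathcal{W}_{\bm{\alpha}}})_{a}(m-2N+1)$ for $m\ge 2N-1$, with the \emph{same} universal matrix $R$ of~\eqref{eq:Rdef} and with $G^{\mathcal{W}_{\bm{\alpha}}}$ defined exactly as $G$ but with the $n$-point functions replaced by their $\mathcal{W}_{\bm{\alpha}}$ analogues. Theorem~\ref{GenusZeroZhu3}, used in this step, is again a purely genus zero identity valid for all vectors in $V$ and therefore unaffected by the module restriction.

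Finally, the remaining linear algebra is formally identical to the proof of Theorem~\ref{theor:ZhuGenusg}: define $X^{\perp}=\Delta^{T}X^{\mathcal{W}_{\bm{\alpha}}}$ and $X^{\Pi}=\Pi X^{\mathcal{W}_{\bm{\alpha}}}$, use the idempotent relations~\eqref{DeltaDelta}, and solve formally for $X^{\perp}$ via the geometric series~\eqref{eq:ImRinverse}. This yields the module analogue of Lemma~\ref{XSub} and, upon substitution, a recursion of the form~\eqref{GenusgZhu} with coefficients $\chi(x)$ and $\psi_{N}(x,y)$ that are manifestly the same as those in Theorem~\ref{theor:ZhuGenusg}, since they depend only on $N$, on the Schottky parameters $(w_{a},\rho_{a})$ and on the universal data $p(x)$, $q(y)$, $R$, $\Delta$. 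Defining $o^{\mathcal{W}_{\bm{\alpha}}}_{a}(\ell):=\rho_{a}^{\ell/2}X^{\mathcal{W}_{\bm{\alpha}}}_{a}(\ell)$ and using the module version of~\eqref{XaRelationGamma} (again from Remark~\ref{rem:adjoint}) to collapse the sum over $a\in\I$ into one over $a\in\Ip$ via the vectors $\theta_{a}(x)$ of~\eqref{eq:thetadef}, and then passing to the formal differential form notation, gives exactly~\eqref{ZhuGenusgModules} with $O^{\mathcal{W}_{\bm{\alpha}}}_{a}(u;\bm{v,y};\ell):=o^{\mathcal{W}_{\bm{\alpha}}}_{a}(u;\bm{v,y};\ell)\bm{dy^{\wt(v)}}$.
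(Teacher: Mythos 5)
Your proposal is correct and follows essentially the same route as the paper, which proves Theorem~\ref{theor:ZhuGenusgModules} by running the argument of Theorem~\ref{theor:ZhuGenusg} with the basis sums restricted to $\mathcal{W}_{\bm{\alpha}}$ and invoking Remark~\ref{rem:adjoint} in place of Lemma~\ref{AdjointLemma} at the adjoint-interchange step. You correctly identify that this is the only point where the module structure and the hypothesis $u\in U$ enter, and that all the universal data $p(x)$, $q(y)$, $R$, $\Delta$, $\chi(x)$, $\Psi_{N}$, $\Theta_{a}$ are unchanged.
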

A genus $g$ version of Theorem~\ref{GenusZeroZhu3} can also be developed.

\section{Convergence of $\Psi_{N}(x,y)$ and $\Theta_{a}(x;\ell)$}\label{PsiPhiConvergence}
\subsection{A Poincar\'{e} sum for $\Psi_{N}(x,y)$}\label{SSPoincareSum}
In this section we rewrite $\Psi_{N}(x,y)$ as a Poincar\'{e} sum over the Schottky group $\gamma\in \Gamma$ of \S\ref{sec:Schottky} with summand $\Psi_{N}^{(0)}(\gamma x,y)$. 
The convergence of this sum depends on the choice of  $f_{\ell}(x)$ terms in  $\psi_{N}^{(0)}(x,y)$. 
For $N=1$ we show that we may choose $\psi_{1}^{(0)}(x,y)$ so that $ \Psi_{1}(x,y)$ is a classical differential of the third kind 
 and each $\Theta_{a}(x;0)$ is a holomorphic 1-form. 
For $N\ge 2$, for 
a choice of $\psi_{N}^{(0)}(x,y)$ due to Bers~\cite{Be1}, the Poincar\'{e} sum  is holomorphic for $x\neq y$ for all  $x,y\in\D=\Chat-\cup_{a\in\I}\Delta_{a}$, 
the fundamental Schottky domain together with identified boundaries $\gamma_{a}\calC_{a}=-\calC_{-a}$ where  $\Delta_{a}$ is the  disc with boundary $\calC_{a}$.  
Furthermore $\{\Theta_{a}(x;\ell)\}$ spans the space of holomorphic $N$-forms. 
We also describe a canonical choice giving a normalized basis of holomorphic $N$-forms. 
    
 Using \eqref{eq:pdef} and  \eqref{eq:Rdef} we firstly note that  $\widetilde{p}(x)=p(x)\Delta$ and $\widetilde{R}=R\Delta $ are independent of the choice of $\psi_{N}^{(0)}(x,y)$ and are given by 
\begin{align}
\label{eq:ptilde}
\widetilde{p}_{b}(x;n)&=\frac{\rho_{b}^{\frac{n+2N-1}{2}}}{(x-w_{b})^{n+2N}},
\\
\label{eq:Rtilde}
\widetilde{R}_{ab}(m,n)&=\begin{cases}(-1)^{N}\rho_{a}^{\frac{m+1}{2}}
\del^{(m)}\widetilde{p}_{b}(w_{-a};n),&a\neq-b\\ 
0,&a=-b.
\end{cases}
\end{align}
Recall the Schottky group generator $\gamma_{a}$  for  $a\in\I$ of \eqref{eq:gamma_a.z} for which
\begin{align*}
\gamma_{a}x=w_{-a}+\frac{\rho_{a}}{x-w_{a}},\quad d(\gamma_{a}x)=-\frac{\rho_{a}}{(x-w_{a})^{2}}dx.
\end{align*}
With $\widetilde{P}_{a}(x)=\widetilde{p}_{a}(x)dx^N$ and $a\neq-b\in\I$ we find the infinite matrix product 
\begin{align*}
\widetilde{P}_{a}(x)\widetilde{R}_{ab}=& (-1)^{N}\sum_{m\ge 0}\frac{\rho_{a}^{m+N}}{(x-w_{a})^{m+2N}}\del^{(m)}\widetilde{p}_{b}(w_{-a})dx^N
\\
=&\sum_{m\ge 0}
\left(\frac{\rho_{a}}{x-w_{a}}\right)^{m}\del^{(m)}\widetilde{p}_{b}(w_{-a})d(\gamma_{a}x)^N  
=\widetilde{P}_{b}(\gamma_{a}x),
\end{align*}
provided $ |\gamma_{a}x-w_{-a}|<|w_{b}-w_{-a}|$. 
Assume that $x\in\D$ so that $x\notin \Delta_{a}$ and hence $\gamma_{a}x\in \Delta_{-a}$. 
Thus $\gamma_{a}x\notin \Delta_{b}$ and the inequality is satisfied using \eqref{eq:JordanIneq}. 
Similarly, for $a\neq-b$, $b\neq-c$ and $x\in\D$ we find  
\begin{align*}
\widetilde{P}_{a}(x)\widetilde{R}_{ab}\widetilde{R}_{bc}=\widetilde{P}_{b}(\gamma_{a}x)\widetilde{R}_{bc}=\widetilde{P}_{c}(\gamma_{b}\gamma_{a}x),
\end{align*}
since $\gamma_{a}x\in \Delta_{-a}$ implying $\gamma_{a}x\notin \Delta_{b}$ so that $\gamma_{b}\gamma_{a}x\in \Delta_{-b}$ and hence $\gamma_{b}\gamma_{a}x\notin \Delta_{c}$. 
For $x\in\D$ and $a_{1},\ldots,a_{k}\in\I $ with $a_{i}\neq -a_{i+1}$ for $i=1,\ldots,k-1$  then by induction
\begin{align}
\label{eq:PtildeRtildek}
\widetilde{P}_{a_{k}}(x)\widetilde{R}_{a_{k}a_{k-1}}\widetilde{R}_{a_{k-1}a_{k-2}}\ldots \widetilde{R}_{a_{2}a_{1}}=\widetilde{P}_{a_{1}}(\gamma_{a_{2}}\ldots \gamma_{a_{k}}x).
\end{align}
Assume that the $f_{\ell}(x)$ terms in  $\psi_{N}^{(0)}(x,y)$  are holomorphic for $|x-w_{a}|<r_{a}$ for  $a\in\I$ for some $r_{a}$. Recalling \eqref{eq:qdef} we find, much as above,  that
%Then 
%\begin{align*}
%\psi_{N}^{(0)}(x,y)=\frac{1}{x-y}+\sum_{\ell=0}^{2N-2}f_{\ell}(x)y^{\ell},
%\end{align*} 
%can be expanded in a neighbourhood of  
%$(w_{-a},w_{b})$ for $a\neq -b$. Recalling  $p(x),q(x),R$ of \eqref{eq:pdef}, \eqref{eq:qdef} and 
%\eqref{eq:Rdef} we find that 
%\begin{align}
%\label{eq:psi0 y exp}
%\psi_{N}^{(0)}(x,w_{b}+y_{b})&=\sum_{n\ge 0}\rho_{b}^{-\frac{n}{2}}p_{b}(x;n)y_{b}^{n},\quad  |y_{b}|<|x-w_{b}|,
%\\
%\label{eq:psi0 x exp}
%\psi_{N}^{(0)}(w_{-a}+x_{-a},y)&=
%(-1)^{N}\sum_{m\ge 0}\rho_{a}^{-\frac{(m+1)}{2}}q_{a}(y;m)x_{-a}^{m},
%\quad  |x_{-a}| <\min(|y-w_{-a}|, r_{-a}),
%\end{align}
%and that for $a\neq-b$  
%\begin{align}
%\label{eq:p exp}
%p_{b}(w_{-a}+x_{-a};n)&=(-1)^{N}\sum_{m\ge 0}\rho_{a}^{-\frac{(m+1)}{2}}R_{ab}(m,n)x_{-a}^{m},\quad  |x_{-a}| <\min(|w_{b}-w_{-a}|,r_{-a}),
%\\
%\label{eq:q exp}
%q_{a}(w_{b}+y_{b};m)&=\sum_{n\ge 0}\rho_{b}^{-\frac{n}{2}}R_{ab}(m,n)y_{b}^{n},
%\quad  |y_{b}|<|w_{b}-w_{-a}|.
%\end{align}
%For $a\in\I$ we have $p_{a}(x;m)=\rho_{a}^{\frac{m}{2}}(x-w_{a})^{-m-1}$ for $m>2N-2$ so that
\begin{align*}
\widetilde{P}_{a}(x)Q_{a}(y)&=
\sum_{m\ge 0} \left(\frac{\rho_{a}}{x-w_{a}}\right)^{m}
\del^{(m,0)}\psi_{N}^{(0)}(w_{-a},y)
\left(\gamma_{a}x\right)^{N}dy^{1-N}
\\
&= \Psi_{N}^{(0)}(\gamma_{a}x,y),
\end{align*}
for $ |\gamma_{a}x-w_{-a}|< |y-w_{-a}|$ and $ r_{-a}$. The first inequality holds provided $x, y\in\D$ as above. Combining this result with  \eqref{eq:PtildeRtildek} we obtain
\begin{proposition}
Let $x,y\in\D$ and $a_{1},\ldots,a_{k}\in\I $ with $a_{i}\neq -a_{i+1}$. Then 
\begin{align}
\label{eq:PRkQ}
\widetilde{P}_{a_{k}}(x)\widetilde{R}_{a_{k}a_{k-1}}\widetilde{R}_{a_{k-1}a_{k-2}}\ldots \widetilde{R}_{a_{2}a_{1}}Q_{a_{1}}(y)=\Psi_{N}^{(0)} (\gamma x,y ),
\end{align}
for $ |\gamma x-w_{-a_{1}}|<  r_{-a_{1}}$ where $\gamma =\gamma_{a_{1}}\ldots \gamma_{a_{k}}$  is a reduced  word in  $\Gamma$ of length $k$. 
\end{proposition}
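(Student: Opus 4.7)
The plan is to deduce the identity by concatenating the two geometric series expansions that have already been verified in the text, namely \eqref{eq:PtildeRtildek} and the formula $\widetilde{P}_{a}(x)Q_{a}(y)=\Psi_{N}^{(0)}(\gamma_{a}x,y)$ displayed just above the Proposition. Both are obtained by recognising a power series in $\rho_{a}/(x-w_{a})$ as the Taylor expansion around $w_{-a}$ of a quantity evaluated at $\gamma_{a}x$, and the Proposition is simply their natural composition.

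Concretely, I would first factor the left-hand side as
\[
\bigl[\widetilde{P}_{a_{k}}(x)\widetilde{R}_{a_{k}a_{k-1}}\cdots \widetilde{R}_{a_{2}a_{1}}\bigr]\,Q_{a_{1}}(y).
\]
The bracketed product is covered by \eqref{eq:PtildeRtildek} (already proved by induction using the reduced-word condition $a_{i}\neq -a_{i+1}$) and equals $\widetilde{P}_{a_{1}}(x')$ where $x':=\gamma_{a_{2}}\cdots\gamma_{a_{k}}x$. Applying the $k=1$ identity to $\widetilde{P}_{a_{1}}(x')Q_{a_{1}}(y)$ then yields $\Psi_{N}^{(0)}(\gamma_{a_{1}}x',y)=\Psi_{N}^{(0)}(\gamma x,y)$, since $\gamma=\gamma_{a_{1}}\gamma_{a_{2}}\cdots\gamma_{a_{k}}$.

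The substantive content of the proof lies in verifying that the base-case expansion is legitimate at the intermediate point $x'$; this is where the Schottky non-intersection inequality \eqref{eq:JordanIneq} and the reduced-word hypothesis enter. Iterating the observation that for $x\in\D$ one has $\gamma_{a}x\in\Delta_{-a}$ (since $x\notin\Delta_{a}$), together with $a_{i}\neq -a_{i+1}$ implying $\Delta_{-a_{i+1}}\cap\Delta_{a_{i}}=\emptyset$, shows inductively that $\gamma_{a_{j+1}}\cdots\gamma_{a_{k}}x\in\Delta_{-a_{j+1}}$ and hence does not lie in $\Delta_{a_{j}}$. Thus each successive action of $\gamma_{a_{j}}$ is valid, and in particular $x'\notin\Delta_{a_{1}}$, giving $\gamma_{a_{1}}x'\in\Delta_{-a_{1}}$. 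The inequality $|\gamma x-w_{-a_{1}}|<|y-w_{-a_{1}}|$ then follows automatically from $y\in\D$ (so $y\notin\Delta_{-a_{1}}$), while the bound $|\gamma x-w_{-a_{1}}|<r_{-a_{1}}$ is the stated hypothesis of the Proposition and ensures convergence of the Taylor expansion of each $f_{\ell}$ around $w_{-a_{1}}$ inside $\psi_{N}^{(0)}$.

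The main obstacle, such as it is, is purely bookkeeping of these convergence regions; the algebraic identity is a direct telescoping of the two building-block identities, and no additional induction beyond that already used for \eqref{eq:PtildeRtildek} is needed once the base case and the telescoping are in place.
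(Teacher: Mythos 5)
Your proposal is correct and follows essentially the same route as the paper: reduce the product of $\widetilde{P}$ and the $\widetilde{R}$ factors to $\widetilde{P}_{a_{1}}(\gamma_{a_{2}}\cdots\gamma_{a_{k}}x)$ via \eqref{eq:PtildeRtildek}, then apply the single-generator identity $\widetilde{P}_{a}(x)Q_{a}(y)=\Psi_{N}^{(0)}(\gamma_{a}x,y)$ at the intermediate point, with the disc-nesting argument from \eqref{eq:JordanIneq} and the reduced-word condition guaranteeing the required convergence inequalities. The bookkeeping of where each intermediate image lies is exactly the content the paper relies on, so nothing is missing.
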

Assuming convergence (which depends on the choice of $f_{\ell}(x)$ terms),  we  may sum over all $a_{i}$ and use $\widetilde{R}_{a, -a}=0$ to find that for $k\ge 1$ 
\begin{align*}
\widetilde{P}(x)\widetilde{R}^{k-1}Q(y)
=\sum_{\gamma\in\Gamma_{k}}\Psi_{N}^{(0)}(\gamma x,y),
\end{align*}
where $\Gamma_{k}\subset \Gamma$ denotes the set of reduced words of length $k$. Then 
$\Psi_{N}(x,y)$ of \eqref{GenusgPsiDef} can be re-expressed using \eqref{eq:ImRinverse} as the following Poincar\'e sum
\begin{align}
\Psi_{N}(x,y) =\Psi_{N}^{(0)}(x,y)+\sum_{k\ge 1}\widetilde{P}(x)\widetilde{R}^{\,k-1}Q(y)
 =\sum_{\gamma\in\Gamma}\Psi_{N}^{(0)}(\gamma x,y).\label{PoincareSum}
\end{align}

We next consider a choice of $\Psi_{N}^{(0)}$ of \eqref{PsiDef} for which $\Psi_{N}$ is convergent  and is an example of what is referred to as a Green's function with Extended Meromorphicity  or  GEM form in~\cite{T1}.
For $N=1$ we let
\begin{align}\label{BersChoiceNeq1}
\Psi_{1}^{(0)}(x,y)=\left(\frac{1}{x-y}-\frac{1}{x}\right)dx.
\end{align} 
Then the  Poincar\'{e} sum~\eqref{PoincareSum} is convergent on $\D$ for $x\neq y$, $x\neq 0$ with~\cite{Bu,Bo} 
\begin{align}\label{eq:Psi1}
 \Psi_{1}(x,y)=\sum_{\gamma\in\Gamma}\Psi_{1}^{(0)}(\gamma x,y)=\omega_{y-0}(x),
 \end{align} 
where $\omega_{a-b}(x)$  is the differential of the third kind~\cite{Mu,Fa}. $ \Psi_{1}(x,y)$ is a 1-form in $x$ but is a weight zero differential quasi-form in $y$ where for   $a\in\Ip$ we find ~\cite{Bu}
\begin{align}\label{PsiDifferenceBurnside}
\Psi_{1}(x,y)-\Psi_{1}(x,\gamma_{a} y)=\nu_{a}(x),
\end{align}
where $\nu_{a}(x)$ is a holomorphic 1-form normalized by 
\begin{align}\label{NuNormalisation}
\frac{1}{\tpi}\oint_{\alpha_{a}}\nu_{b}  =  \delta_{ab},\quad a,b\in\Ip,
\end{align}
for  standard homology cycle  $\alpha_{a}\sim\calC_{-a}$. We also find
\begin{align}\label{eq:Psi1dy}
\del_{y}\Psi_{1}(x,y)dy=
\sum_{\gamma\in\Gamma}\frac{d(\gamma x)dy}{(\gamma x-y)^{2}}
=\omega(x,y),
\end{align}
where $\omega (x, y)$  is the symmetric  normalized bidifferential of the second kind~\cite{Mu,Fa} 
 
For all $N\ge 2$ we consider the Bers function~\cite{Be1}
\begin{align}\label{BersChoiceNge2}
\Psi_{N}^{(0)}(x,y)&=
\frac{1}{x-y}\prod_{j=0}^{2N-2}\frac{y-A_{j}}{x-A_{j}}dx^{N}dy^{1-N},
\end{align}
where $A_{0},\ldots,A_{2N-2}\in\Lambda(\Gamma) $ are distinct limit points for $\Gamma$.
Thus we find 
\begin{align*}
\psi_{N}^{(0)}(x,y)&=\frac{1}{x-y}-\sum_{i=0}^{2N-2}\frac{1}{x-A_{i}}L_{i}(y),
\end{align*}
where $L_{i}(y)=\prod_{j\neq i}\frac{y-A_{j}}{A_{i}-A_{j}}\in\Pi_{2N-2}(y)$, the space of degree $2N-2$ polynomials in $y$,
 determining the $f_{\ell}(x)$ coefficients of \eqref{PsiDef}. 
Then the Poincar\'{e} sum~\eqref{PoincareSum}  is holomorphic on $\D$ for $x\neq y$ ~\cite{Be1}.
$\Psi_{N} (x,y)$ is an $N$-form in $x$ and a $(1-N)$-form quasi-form in $y$ (generalizing \eqref{PsiDifferenceBurnside}) where for all $\gamma\in\Gamma$ ~\cite{Be1,McIT,T1}
\begin{align}
\label{eq:Psigamx}
\Psi_{N}(x,y)-\Psi_{N}( \gamma x,y) &=0,
\\
\label{eq:Psigamy}
\Psi_{N}(x,y)-\Psi_{N}( x,\gamma y) &=\sum_{r=1}^{d_{N}}\Phi_{r}(x){\Xi}_{r}[\gamma](y),
\end{align}
where $\{\Phi_{r}(x)\}_{r=1}^{d_{N}}$ is a basis\footnote{\label{foot:Dual basis}
	There is a difference  in notation here in comparison to that of~\cite{T1}. In  that paper the cohomology basis $\{\Xi_{r}\}_{r=1}^{d_{N}}$ is mapped isomorphically by the Bers map to  an $N$-form basis which is the Petersson dual of our $N$-form basis $\{\Phi_{r}(x)\}_{r=1}^{d_{N}}$.} of dimension  $d_{N}=(g-1)(2N-1)$ (by Riemann-Roch) of holomorphic $N$-forms 
and $\{\Xi_{r}\}_{r=1}^{d_{N}}$ is a cohomology basis
with Eichler $1$-cocycle elements of the form
${\Xi}_{r}[\gamma](y)=\xi_{r}[\gamma](y)dy^{1-N}$ for $\xi_{r}[\gamma](y)\in\Pi_{2N-2}(y)$ 
obeying 
\begin{align}
\label{eq:cocycle}
\Xi [\gamma\lambda](y)=\Xi [\gamma](\lambda y)+\Xi[\lambda](y),\quad \forall\;\gamma,\lambda\in\Gamma.
\end{align} 
A coboundary cocycle is one of the form $\Xi_{P}[\gamma](y)=P(\gamma y)-P(y)$ for  $\gamma\in \Gamma$ where  $P(y)=p(y)dy^{1-N}$  for $p(y)\in\Pi_{2N-2}(y)$.  $\{{\Xi}_{r}\}_{r=1}^{d_{N}} $ is then a basis for the  cohomology space of  cocycles modulo coboundaries. 
Lastly the $N$-forms are normalized~\cite{McIT,T1}
\begin{align}
\label{eq:Nformnorm}
\frac{1}{\tpi}
\sum_{a=1}^{g}\oint_{\calC_{a}}\Phi_{r}\,\Xi_{s}[\gamma_{a}]=
\delta_{rs},\quad r,s=1,\ldots,d_{N}.
\end{align}
We also define  a symmetric bidifferential of weight $(N,N)$ and  holomorphic for $x\neq y$:
 \begin{align}
 \label{eq:LambdaN}
 \Lambda_{N}(x,y):=
 \sum_{\gamma\in\Gamma}\left(\frac{d(\gamma x)dy}{(\gamma x-y)^{2}}\right)^{N}
=\left(
 \frac{1}{(x-y)^{2N}}+\textrm{regular terms}\right)
 dx^{N}dy^{N}.
 \end{align}
Then, since $\del_{y}^{(2N-1)}\psi_{N}^{(0)}(x,y)=(x-y)^{-2N}$, we may generalise \eqref{eq:Psi1dy} to find
\begin{align}
 \label{eq:LambdaNdlPsi}
dy^{2N-1}\del_{y}^{(2N-1)}\Psi_{N}(x,y)=\Lambda_{N}(x,y).
\end{align}

\subsection{The holomorphicity of $ \Theta_{a}(x;\ell)$} 
For $N\ge 2$ with the choice \eqref{BersChoiceNge2}, then  $\Psi_{N}(x,y)$ is holomorphic for $x,y\in\D$ with $x\neq y$. 
For $a\in\I $ we consider the Laurent expansion 
$\psi_{N}(x,y)=\sum_{n\in\Z}c_{a}(x;n)y_{a}^n  $ for $y_{a}=y-w_{a}$.
\eqref{eq:Psigamx} implies that $c_{a}(x;n)dx^{N}$ is a holomorphic $N$-form. In particular, for $\ell=0,\ldots,2N-2$ and  $a\in\I $ and using \eqref{eq:pdef}, \eqref{eq:qdef}, \eqref{eq:Rdef} and \eqref{GenusgPsiDef} we  obtain  
\begin{align*}
c_{a}(x;\ell)&=\frac{1}{\tpi}\oint_{\calC_{a}}\psi_{N}(x,y)y_{a}^{-\ell-1}dy 
=\chi_{a}(x;\ell),
\end{align*}
 of \eqref{eq:chiadef}. Therefore $\chi_{a}(x;\ell)dx^{N}$ and  $\Theta_{a}(x;\ell)=\theta_{a}(x;\ell)dx^{N}$ of  \eqref{eq:thetadef}
are  holomorphic $N$-forms   for all $N\ge 2$ for the Bers choice.

Consider $ \gamma_{a}y=w_{-a}+\rho_{a}/y_{a}$ for $a\in\Ip$.
From \eqref{PsiDifferenceBurnside} and \eqref{eq:Psigamy},  $\Psi_{N}(x,y)-\Psi_{N}(x,\gamma_{a}y)$ is a degree $2N-2$ polynomial in $y$ with coefficients given by holomorphic $N$-forms which may computed by identifying the following terms in the Laurent expansions 
\begin{align*}
\Psi_{N}(x,y)&=\ldots +\sum_{\ell=0}^{2N-2} \chi_{a}(x,\ell) y_{a}^\ell  dx^{N}dy_{a}^{1-N}+\ldots
\\
\Psi_{N}(x,\gamma_{a}y)&=\ldots +\sum_{\ell=0}^{2N-2} \chi_{-a}(x,\ell) y_{-a}^\ell dx^{N}dy_{-a}^{1-N}+\ldots
\end{align*}
where $y_{-a}=\rho_{a}/y_{a}$. Hence we find using \eqref{eq:thetadef} that 
\begin{align}
\Psi_{N}(x,y)-\Psi_{N}(x,\gamma_{a}y)
%&=\sum_{\ell=0}^{2N-2}\left(\chi_{a}(x,\ell) y_{a}^\ell dy_{a}^{1-N}
%- \chi_{-a}(x,\ell) y_{-a}^\ell dy_{-a}^{1-N}\right) 
%\\
&=\sum_{\ell=0}^{2N-2}\left(\chi_{a}(x,\ell) y_{a}^\ell 
- \chi_{-a}(x,\ell) \left(\frac{\rho_{a}}{y_{a}}\right)^\ell \left(\frac{-\rho_{a}}{y_{a}^2}\right)^{1-N} \right)dx^{N}dy_{a}^{1-N} 
\notag
\\
&=\sum_{\ell=0}^{2N-2} \Theta_{a}(x,\ell)  y_{a}^\ell dy_{a}^{1-N}.
\label{eq:ThetaPsi}
\end{align}
Thus using \eqref{PsiDifferenceBurnside} we conclude that for $N=1$
\begin{align}
\label{eq:ThetaN1}
\Theta_{a}(x;0)=\nu_{a}(x),
\end{align}
for normalized 1-forms.
For $N\ge 2$, 	\eqref{eq:ThetaPsi}  can be compared to \eqref{eq:Psigamy} for $\gamma=\gamma_{a}$ with canonical cocycle basis \eqref{eq:xinorm}. Furthermore, since $\Gamma$ is generated by $\gamma_{a}$ then \eqref{eq:Psigamy} implies that $\{\Theta_{a}(x;\ell)\}$ for $\ell=0,\ldots,2N-2$ and $a\in\Ip $  spans the space of holomorphic $N$-forms. Lastly, we note \eqref{eq:ThetaPsi} implies that $ \Theta_{a}(x,\ell)$ can be written as a Poincar\'{e} series (and therefore so can every holomorphic $N$-form).
 
The following summary theorem  describes the convergence and geometrical meaning of the universal coefficients $\{\Theta_{a}(x;\ell)\}$ for $a\in\Ip$ and $\ell=0,\ldots,2N-2$ and $\Psi_{N}(x,y)$ 
  in the Zhu recursion formulas of Theorem~\ref{theor:ZhuGenusg} for $g\ge 2$.  
\begin{theorem}\label{theor:ZhuConvergent}
	For  $N=1$ we may choose $\Psi_{1}^{(0)}$ of \eqref{BersChoiceNeq1} so that  $\Psi_{1}(x,y)=\omega^{(g)}_{y-0}(x)$ and  $\Theta_{a}(x;0)=\nu_{a}^{(g)}(x)$. 
	For $N\ge 2$ we may make the Bers choice for $\Psi_{N}^{(0)}$ in \eqref{BersChoiceNge2} so that
$\Psi_{N}(x,y)$ is  holomorphic for $x\neq y$ and is an $N$-form in $x$ and a $(1-N)$ quasidifferential  in $y$ and where  $\{\Theta_{a}(x;\ell)\}$  spans the space of holomorphic $N$-forms.
\end{theorem}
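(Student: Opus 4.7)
The plan is to assemble the ingredients already developed in this section into a coherent proof, the central tool being the Poincar\'e sum representation $\Psi_N(x,y)=\sum_{\gamma\in\Gamma}\Psi_N^{(0)}(\gamma x,y)$ established in \eqref{PoincareSum}, which is valid wherever the right-hand side converges.

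For $N=1$, I would take the choice \eqref{BersChoiceNeq1} and invoke the classical convergence result of Burnside to obtain absolute convergence of the Poincar\'e sum on $\D$ for $x\neq y$, $x\neq 0$, together with the identification $\Psi_1(x,y)=\omega_{y-0}^{(g)}(x)$ as the differential of the third kind, as recorded in \eqref{eq:Psi1}. The identity $\Theta_a(x;0)=\nu_a^{(g)}(x)$ is then immediate: specialising \eqref{eq:ThetaPsi} to $N=1$ (only $\ell=0$ survives) yields $\Psi_1(x,y)-\Psi_1(x,\gamma_a y)=\Theta_a(x;0)$, which on comparison with the classical quasi-periodicity \eqref{PsiDifferenceBurnside} forces $\Theta_a(x;0)$ to equal the normalised holomorphic $1$-form $\nu_a^{(g)}(x)$.

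For $N\ge 2$, the key analytic input is Bers' convergence theorem for the summand \eqref{BersChoiceNge2}: with $A_0,\dots,A_{2N-2}$ distinct points of $\Lambda(\Gamma)$, the series $\sum_{\gamma\in\Gamma}\Psi_N^{(0)}(\gamma x,y)$ converges absolutely and uniformly on compacta of $\D\times\D$ off the diagonal and defines a function holomorphic there. The $N$-form character in $x$ and invariance $\Psi_N(x,y)=\Psi_N(\gamma x,y)$ of \eqref{eq:Psigamx} follow because $x\mapsto\gamma x$ merely permutes the summands, while the $(1-N)$-quasiform transformation law \eqref{eq:Psigamy} in $y$ is exactly Bers' cocycle identity for this construction.

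It remains to show that $\{\Theta_a(x;\ell)\}_{a\in\Ip,\,0\le\ell\le 2N-2}$ spans the space of holomorphic $N$-forms. I would read off \eqref{eq:ThetaPsi} as saying precisely that the $\Theta_a(x;\ell)$ are the $N$-form coefficients, with respect to the basis of polynomials $y_a^{\ell}dy_a^{1-N}$, of the cocycle value $\Psi_N(x,y)-\Psi_N(x,\gamma_a y)$. Since $\Gamma$ is freely generated by the $\gamma_a$, the cocycle condition \eqref{eq:cocycle} recovers the entire cocycle $\Xi[\gamma]$ from its values on these generators, so the $\Theta_a(x;\ell)$ determine the full cohomology class $y\mapsto\Psi_N(x,y)-\Psi_N(x,\gamma y)$ attached to $\Psi_N$. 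By Bers' theorem \eqref{eq:Psigamy}, this cohomology class is built from a basis of the full $d_N$-dimensional space of holomorphic $N$-forms, so the $\Theta_a(x;\ell)$ span that space (with redundancy encoded by the relation \eqref{XaRelationGamma}). The principal obstacle throughout is the Bers convergence result for $N\ge 2$, which I treat as a classical analytic black box; every other step is a bookkeeping consequence of the Laurent expansion \eqref{eq:ThetaPsi} and the cocycle formalism.
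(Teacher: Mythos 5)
Your proposal is correct and follows essentially the same route as the paper, which proves this as a ``summary theorem'' by assembling the Poincar\'e sum \eqref{PoincareSum}, the Burnside/Bers convergence results for the choices \eqref{BersChoiceNeq1} and \eqref{BersChoiceNge2}, the Laurent-expansion identity \eqref{eq:ThetaPsi}, and the fact that $\Gamma$ is generated by the $\gamma_{a}$ so that \eqref{eq:Psigamy} forces $\{\Theta_{a}(x;\ell)\}$ to span the holomorphic $N$-forms. The only slip is your parenthetical attribution of the redundancy among the $\Theta_{a}(x;\ell)$ to \eqref{XaRelationGamma}, which concerns the components $X_{a}(\ell)$; the relevant redundancy is the $2N-1$ coboundary relations on the cocycles $\{\Xi_{ak}\}$ noted in \S\ref{sect:normalized basis} --- but this does not affect the argument.
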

Combining Theorem~\ref{theor:ZhuGenusg}, Corollary~\ref{cor:GenGenusgZhu} and  \ref{theor:ZhuConvergent} we find that  $\F_{V}^{(g)}(u,x;\bm{v,y})$ is a finite linear combination, with formal coefficients, of meromorphic functions in $x$ with poles at $x=y_{k}\in\D$.
% In particular, the 1-point function $\F_{V}^{(g)}(u,x)$ for quasiprimary $u$ with $\wt(u)=N$ is a linear combination of holomorphic $N$-forms. 
Furthermore we may identify the $V$-dependent formal coefficients $O_{a}(u;\bm{v,y})$ in \ref{eq:ZhuGenusg} as follows
\begin{lemma}\label{lem:Oacont}
For $a\in\Ip$, $0\le \ell\le 2N-2$  and $y_{k}\in\D$ we have 
\[
O_{a}(u;\bm{v,y};\ell)=\frac{1}{\tpi} \oint_{\mathcal{C}_a}\F_{V}^{(g)}(u,x;\bm{v,y})(x-w_{a})^{\ell}\,dx^{1-N}.
\]	
\end{lemma}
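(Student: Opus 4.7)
The strategy is to use the operator product expansion to extract $o_a(u;\bm{v,y};\ell)$ as a contour residue at $w_a$, and then verify that the contour can be taken to be $\mathcal{C}_a$ rather than an infinitesimal circle around $w_a$. By associativity \eqref{AssocId}, for $x$ in a neighbourhood of $w_a$ we have the OPE
\begin{align*}
Y(u,x)Y(b_{a},w_{a})=Y\bigl(Y(u,x-w_{a})b_{a},w_{a}\bigr)=\sum_{m}Y(u(m)b_{a},w_{a})(x-w_{a})^{-m-1},
\end{align*}
with only finitely many singular terms (by VOA truncation). Taking the matrix element defining $Z^{(0)}$ yields the Laurent expansion
\begin{align*}
Z^{(0)}(u,x;\bm{v,y};\bm{b,w})=\sum_{m}Z^{(0)}(\ldots;u(m)b_{a},w_{a};\ldots)\,(x-w_{a})^{-m-1}.
\end{align*}

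The next step is to identify the region of convergence. Since $Z^{(0)}$ is rational in $x$ with poles only at the other insertion points $w_{b}$ ($b\neq a$), $w_{-a}$ and $y_{k}$, the Laurent series converges for $|x-w_{a}|$ strictly less than the distance to the nearest such point. The Jordan inequality \eqref{eq:JordanIneq} gives $|w_{a}-w_{b}|>|\rho_{a}|^{1/2}+|\rho_{b}|^{1/2}>|\rho_{a}|^{1/2}$ for all $b\neq a$, while the hypothesis $y_{k}\in\D$ places $y_{k}$ outside every $\Delta_{c}$, hence $|y_{k}-w_{a}|>|\rho_{a}|^{1/2}$. Therefore the series converges on an annular neighbourhood of $\mathcal{C}_{a}=\{|x-w_{a}|=|\rho_{a}|^{1/2}\}$, so we may integrate term by term against $(x-w_{a})^{\ell}dx$ along $\mathcal{C}_{a}$, isolating the $m=\ell$ term:
\begin{align*}
\tfrac{1}{\tpi}\oint_{\mathcal{C}_{a}}Z^{(0)}(u,x;\bm{v,y};\bm{b,w})(x-w_{a})^{\ell}dx=Z^{(0)}(\ldots;u(\ell)b_{a},w_{a};\ldots).
\end{align*}

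Summing over the $V^{\otimes g}$-basis $\{\bm{b}_{+}\}$ and recognising the right-hand side via \eqref{XamDef} and \eqref{LittleODef} (where $o_{a}(\ell)=\rho_{a}^{\ell/2}X_{a}(\ell)=\sum_{\bm{b}_{+}}Z^{(0)}(\ldots;u(\ell)b_{a},w_{a};\ldots)$) gives
\begin{align*}
\tfrac{1}{\tpi}\oint_{\mathcal{C}_{a}}Z_{V}^{(g)}(u,x;\bm{v,y})(x-w_{a})^{\ell}dx=o_{a}(u;\bm{v,y};\ell).
\end{align*}
Finally, multiplying by $\bm{dy^{\wt(v)}}$ and writing $\F_{V}^{(g)}(u,x;\bm{v,y})=Z_{V}^{(g)}(u,x;\bm{v,y})\,dx^{N}\bm{dy^{\wt(v)}}$, so that $(x-w_{a})^{\ell}dx^{1-N}\cdot dx^{N}=(x-w_{a})^{\ell}dx$, yields the claimed identity against $O_{a}(u;\bm{v,y};\ell)=o_{a}(u;\bm{v,y};\ell)\bm{dy^{\wt(v)}}$.

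The main obstacle is the justification of the term-by-term integration, i.e.\ that the OPE Laurent expansion of $Z^{(0)}$ about $w_{a}$ actually converges out to $\mathcal{C}_{a}$; as shown above this is precisely guaranteed by the Schottky separation inequality \eqref{eq:JordanIneq} together with the placement $y_{k}\in\D$. A secondary point is interchanging the sum over $\bm{b}_{+}$ with the contour integral, which is legitimate in any regime in which $Z_{V}^{(g)}$ is a convergent function of the Schottky data (as assumed throughout).
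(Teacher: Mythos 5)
Your proof is correct and follows essentially the same route as the paper: both expand $\F_{V}^{(g)}(u,x;\bm{v,y})$ about $w_{a}$ via associativity, obtaining a Laurent series whose coefficients are $\sum_{\bm{b}_{+}}Z^{(0)}(\ldots;u(k)b_{a},w_{a};\ldots)$, and then extract the $k=\ell$ coefficient by the contour integral over $\mathcal{C}_{a}$. The only difference is one of emphasis: you justify the validity of the contour at radius $|\rho_{a}|^{1/2}$ term by term at genus zero using the Jordan inequality \eqref{eq:JordanIneq}, whereas the paper compresses this into an appeal to the meromorphicity of $\F_{V}^{(g)}(u,x;\bm{v,y})$ in $x$ established just before the lemma.
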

\begin{proof}
 Let $x=x_{a}+w_{a}$ and using locality and associativity \eqref{eq:Z0assoc} (up to a common multiplier) we find
\begin{align*}
\F_{V}^{(g)}(u,x;\bm{v,y})&=\sum_{\bm{b}_{+}}\F^{(0)}(u,x_{a}+w_{a};\bm{v,y};\bm{b,w})
\\
&=\sum_{\bm{b}_{+}}\F^{(0)}( \ldots ;Y(u,x_{a})b_{a},w_{a};\ldots)
\\
&=\sum_{\bm{b}_{+}}\sum_{k\in\Z}\F^{(0)}( \ldots ;u(k)b_{a},w_{a};\ldots)x_{a}^{-k-1}.
\end{align*}
The result follows from the meromorphicity of $\F_{V}^{(g)}(u,x;\bm{v,y})$ in $x$.
\end{proof}
\subsection{General and canonical GEM forms}\label{sect:normalized basis}
For $N\ge 2$ we  may define a canonical  basis of $g(2N-1)$  cocycles associated with the Schottky sewing scheme as follows~\cite{T1}. For $a,b\in \Ip$ and $k=0,\ldots,2N-2$ and for $\Gamma$ generator $\gamma_{b}$  we define
\begin{align}
\label{eq:xinorm}
\Xi_{ak}[\gamma_{b}](y):=\delta_{ab}(y-w_{a})^{k}\, dy^{1-N}.
\end{align}
\eqref{eq:cocycle} determines the cocycle for all $\gamma\in\Gamma$.
% Furthermore \eqref{eq:ThetaPsi} implies that for all $\gamma\in\Gamma$
%\begin{align*}
%\Psi_{N}(x,y)-\Psi_{N}(x,\gamma y)=
%\sum_{a=1}^{g}
%\sum_{k=0}^{2N-2} \Theta_{a}(x,k)  
%\Xi_{ak}[\gamma](y).
%\end{align*}
There are $2N-1$  relations modulo coboundaries on $\{\Xi_{ak}\}$. We may choose a cohomology basis  $\{\Xi_{ak}\}_{\J}$  indexed by $ (a,k)\in \J$, a set  of  $d_{N}$ appropriate  $(a,k)$ values with $1\le a \le g$ and $0\le k\le 2N-2$. Let $\{\Phi_{ak}\}_{\J}$ be the corresponding basis of holomorphic $N$-forms with normalization \eqref{eq:Nformnorm}. We may thus define a more general GEM form \cite{T1}
\begin{align}\label{eq:GEMform}
\Psi_{N}(x,y):=\Psi_{N}^{\Bers}(x,y)+	\sum_{(a,k)\in \J}\Phi_{ak}(x)P_{ak}(y),
\end{align}
where $\Psi_{N}^{\Bers}(x,y)$ is the Bers choice  and $P_{ak}(y)=p_{ak}(y)dy^{1-N}$ for  $p_{ak}(y)\in\Pi_{2N-2}(y)$. 
 $\Psi_{N}$  is holomorphic on $\D$ for $x\neq y$ and obeys \eqref{eq:Psigamx}
and \eqref{eq:Psigamy} with 
\[
\Xi_{ak}[\gamma]=\Xi_{ak}^{\Bers}[\gamma]+\Xi_{P_{ak}}[\gamma],
\] 
for coboundary cocycle $\Xi_{P_{ak}}[\gamma](y)=P_{ak}(\gamma y)-P_{ak}( y)$. Furthermore,  Zhu recursion of Theorems~\ref{theor:ZhuGenusg} and \ref{theor:ZhuGenusgModules} hold for any choice of GEM form \eqref{eq:GEMform} with
$\del^{(0,j)}\Psi_{N}(x,y)=\del^{(0,j)}\Psi_{N}^{\Bers}(x,y)$ for all $j\ge 2N-1$. In particular  \eqref{eq:LambdaNdlPsi} holds for all GEM forms. 
\medskip

We may  choose  $P_{ak}(y)$ in order to construct a canonical GEM form\footnote{See footnote~\ref{foot:Dual basis}} as  follows~\cite{T1} 
\begin{proposition}\label{prop:Can_norm}
	For $N\ge 2$ and cohomology basis $\{\Xi_{ak}\}_{\J}$  with normalized $N$-form basis $\{\Phi_{ak}\}_{\J}$, there exists a canonical GEM form ${\Psi}^{\Can}_{N}$ such that for $(a,k),(b,l)\in\J$ 
	\begin{align}\label{eq:CanPhi_norm}
	&\frac{1}{\tpi} \oint_{\mathcal{C}_a}\Phi_{bl}(x)(x-w_{a})^k\,dx^{1-N}
	=
 \delta_{ab}\delta_{kl},
	\\
	& {\Psi}^{\Can}_{N}(x,y)- {\Psi}^{\Can}_{N}( x,\gamma_{a} y) 
	=
	\sum_{(a,k)\in \J}\Phi_{ak}(x)(y-w_{a})^k\,dy ^{1-N},
	\label{eq:CanPsiQPeriod}
	\\
	&\oint_{\calC_{a}} {\Psi}^{\Can}_{N}(x,y)(x-w_{a})^k\,dx ^{1-N}
	=0,\quad \mbox{for }y\in\D.
	\label{eq:CanPsi_norm}
	\end{align} 
\end{proposition}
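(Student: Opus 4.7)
Property (1) is a direct specialization of the normalization \eqref{eq:Nformnorm} that defines the $N$-form basis $\{\Phi_{ak}\}_{\J}$ dual to the canonical cocycles $\{\Xi_{ak}\}_{\J}$: substituting the explicit form $\Xi_{ak}[\gamma_c](x)=\delta_{ac}(x-w_a)^k\,dx^{1-N}$ from \eqref{eq:xinorm}, the Kronecker $\delta_{ac}$ collapses the sum over $c$ to the single $c=a$ term, yielding the claimed orthogonality immediately.

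To address (2) and (3), I would take the general GEM form \eqref{eq:GEMform} with polynomial coefficients $P_{bl}(y)=p_{bl}(y)\,dy^{1-N}$ (for $(b,l)\in\J$) to be determined, and impose (3) as a normalization condition. Combining property (1) with the vanishing condition (3) forces
\begin{align*}
P_{ak}(y) = -\frac{1}{\tpi}\oint_{\calC_a}\Psi^{\Bers}_N(x,y)(x-w_a)^k\,dx^{1-N},
\end{align*}
uniquely determining each $P_{ak}$ for $(a,k)\in\J$. I would then verify that this expression yields a valid quasi form in $\Pi_{2N-2}(y)\,dy^{1-N}$ by residue analysis: for $y\in\D$ the pole $x=y$ lies outside $\Delta_a$, so the integrand is meromorphic in $x$ inside $\Delta_a$ only at the limit points $A_j\in\Delta_a$ used in the Bers choice \eqref{BersChoiceNge2}, and each residue
\begin{align*}
\Res_{x=A_j}\Psi^{\Bers}_N(x,y) = -\prod_{j'\neq j}\frac{y-A_{j'}}{A_j-A_{j'}}\,dy^{1-N}
\end{align*}
is a polynomial in $y$ of degree exactly $2N-2$; summing with weights $(A_j-w_a)^k$ keeps the total in the desired space.

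It remains to verify property (2). The Bers expansion \eqref{eq:Psigamy} defines cocycles $\Xi^{\Bers}_r[\gamma]$ of $\Psi^{\Bers}_N$ in the basis $\{\Phi_r\}_{\J}$, and by the Petersson-type duality in property~(1) their cohomology classes coincide with the canonical $[\Xi_r]$. The cocycles of the modified kernel are $\Xi^{\Can}_r[\gamma]=\Xi^{\Bers}_r[\gamma]-\Xi_{P_r}[\gamma]$. Inserting the integral formula for $P_{ak}$ into $\Xi_{P_{ak}}[\gamma_c](y)=P_{ak}(\gamma_c y)-P_{ak}(y)$ and exchanging integration with the difference operator, I would express $\Xi^{\Can}_{ak}[\gamma_c]$ as a contour integral of $\Psi^{\Bers}_N(x,y)-\Psi^{\Bers}_N(x,\gamma_c y)$ against $(x-w_a)^k\,dx^{1-N}$; the Bers cocycle identity combined with property~(1) should then collapse this to the canonical representative $\delta_{ac}(y-w_a)^k\,dy^{1-N}$. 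The principal obstacle is showing that the coboundary correction imposed by (3) singles out precisely the canonical cocycle representative rather than merely a cohomologous one, which is the crux of the construction and relies on the detailed Bers kernel duality developed in \cite{T1}.
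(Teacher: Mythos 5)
First, note that the paper itself gives no proof of Proposition~\ref{prop:Can_norm}: the statement is imported from \cite{T1} (``We may choose $P_{ak}(y)$ \dots as follows~\cite{T1}''), so there is no internal argument to compare yours against, and your proposal must be judged on its own merits. Your treatment of \eqref{eq:CanPhi_norm} is correct --- it is exactly \eqref{eq:Nformnorm} with the canonical cocycle \eqref{eq:xinorm} substituted --- and your idea of using \eqref{eq:CanPhi_norm} to solve \eqref{eq:CanPsi_norm} for $P_{ak}(y)=-\frac{1}{\tpi}\oint_{\calC_a}\Psi_N^{\Bers}(x,y)(x-w_a)^k\,dx^{1-N}$ is the natural ansatz. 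However, your verification that this $P_{ak}$ lies in $\Pi_{2N-2}(y)\,dy^{1-N}$ does not work. The function $x\mapsto\Psi_N^{\Bers}(x,y)$ is \emph{not} meromorphic at the limit points $A_j$: these lie in $\Lambda(\Gamma)$, where the Poincar\'e sum \eqref{PoincareSum} has no analytic continuation, so $\Res_{x=A_j}$ of the full kernel is not the residue of the single $\gamma=\Id$ summand that you compute. Moreover, inside $\Delta_a$ the kernel has infinitely many genuine simple poles at the points $\gamma^{-1}y$ for every reduced word $\gamma$ ending in $\gamma_a$ (e.g.\ $\gamma_{-a}y\in\Delta_a$), accumulating on $\Lambda(\Gamma)\cap\Delta_a$; your residue count omits all of these, and each such residue involves $(\gamma'(\gamma^{-1}y))^{N-1}$ and is not individually polynomial in $y$. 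A correct argument must avoid collapsing $\calC_a$ into $\Delta_a$ altogether, for instance by combining the $x$-periodicity \eqref{eq:Psigamx} with a bilinear-relation computation over $\oint_{\partial\D}$, which is precisely the Bers duality carried out in \cite{T1}.

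Second, and more fundamentally, \eqref{eq:CanPsiQPeriod} is the real content of the proposition and you do not prove it: you yourself defer the key step to ``the detailed Bers kernel duality developed in \cite{T1}.'' That step is exactly what separates the cohomological statement $[\Xi_{ak}^{\Bers}]=[\Xi_{ak}]$ (which does follow from \eqref{eq:Nformnorm}, since coboundaries pair to zero against holomorphic $N$-forms) from the exact equality of cocycle representatives demanded by \eqref{eq:CanPsiQPeriod}; bridging it requires exhibiting the polynomial $Q_{ak}$ with $\Xi_{ak}^{\Bers}-\Xi_{ak}=\Xi_{Q_{ak}}$, checking that $P\mapsto\Xi_P$ is injective on $\Pi_{2N-2}(y)\,dy^{1-N}$ so that $Q_{ak}$ is unique, and showing that $Q_{ak}$ coincides with the $P_{ak}$ extracted from \eqref{eq:CanPsi_norm}. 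As written, your argument establishes \eqref{eq:CanPhi_norm}, produces a plausible but unverified candidate for the correction terms, and leaves the equivalence of \eqref{eq:CanPsiQPeriod} with \eqref{eq:CanPsi_norm} --- the crux --- to the very reference from which the proposition is quoted.
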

Proposition~\ref{prop:Can_norm} is a natural generalisation of the properties of $ \omega _{y-0}(x)$ of \eqref{eq:Psi1} and \eqref{NuNormalisation} where, for $N=1$, we have $\J=\{(a,0)|a\in\I \}$, 
${\Psi}^{\Can}_{1}(x,y)=\omega^{(g)}_{y-0}(x)$ and $\Phi_{a0}=\nu_{a}$.
Since $\Psi_{N}^{\Bers}(x,y)$ and $\Phi_{ak}(x)$ have Poincar\'e sums,  so also does the canonical GEM form  for an appropriate  choice of the coefficient functions $f_{\ell}^{\Can}(x)$ in $\psi^{(0)}_{N}(x,y)$. 
We therefore obtain the following refinement of the genus $g$ Zhu recursion Theorem~\ref{theor:ZhuGenusg} (where a spanning set of $N$-forms is replaced by a basis):
\begin{theorem}[Normalized Quasiprimary Genus $g$ Zhu Recursion]\label{theor:ZhuGenusgCan}
	For a canonical cohomology  basis $\{\Xi_{ak}\}_{\J}$ with corresponding holomorphic $N$-form basis $\{\Phi_{ak}\}_{\J}$ and canonical GEM form ${\Psi}^{\Can}_{N}$, the genus $g$ $n$-point formal differential for a quasiprimary vector $u$ of weight $\wt(u)=N$ satisfies the recursive identity
	\begin{align}\label{eq:ZhuGenusgCan}
	\F_{V}^{(g)}(u,x;\bm{v,y})&=\sum_{(a,k)\in\J} \Phi_{ak}(x)O_{a}(u;\bm{v,y};k)
	\notag
	\\
	&\quad+\sum_{r=1}^{n}\sum_{j\ge 0}\del^{(0,j)}\Psi^{\Can}_{N}(x,y_{r})dy_{r}^{j}\,\F_{V}^{(g)}(\ldots;u(j)v_{r},y_{r};\ldots).
	\end{align}
	\end{theorem}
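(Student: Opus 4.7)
The plan is to deduce Theorem~\ref{theor:ZhuGenusgCan} directly from Theorem~\ref{theor:ZhuGenusg} by specializing the GEM form to the canonical choice $\Psi_N^{\Can}$ produced in Proposition~\ref{prop:Can_norm}. As noted in the paragraph following \eqref{eq:GEMform}, the derivation of Theorem~\ref{theor:ZhuGenusg} goes through verbatim for any admissible GEM form: only the auxiliary polynomials $f_\ell(x)$ in $\psi_N^{(0)}$ are modified, while the structural identity persists and, crucially, $\del^{(0,j)}\Psi_N^{\Can}(x,y)=\del^{(0,j)}\Psi_N^{\Bers}(x,y)$ for $j\ge 2N-1$, so the recursive $y_k$-sum in \eqref{eq:ZhuGenusgCan} makes sense. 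Moreover, Lemma~\ref{lem:Oacont} expresses $O_a(u;\bm{v,y};\ell)$ as a contour integral of $\F_V^{(g)}(u,x;\bm{v,y})$ about $\calC_a$, so these coefficients are intrinsic and independent of the choice of GEM form. Hence the first step is simply to apply Theorem~\ref{theor:ZhuGenusg} with $\Psi_N=\Psi_N^{\Can}$, obtaining the recursion with universal holomorphic $N$-form coefficients $\Theta_a^{\Can}(x;\ell)$.

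The heart of the argument is then to identify $\Theta_a^{\Can}(x;\ell)$ explicitly. The derivation of \eqref{eq:ThetaPsi} is purely formal, based only on comparing Laurent expansions at $y=w_a$ and $y=w_{-a}$ together with the action of $\gamma_a$; it therefore remains valid for $\Psi_N^{\Can}$, giving
\[
\Psi_N^{\Can}(x,y)-\Psi_N^{\Can}(x,\gamma_a y)=\sum_{\ell=0}^{2N-2}\Theta_a^{\Can}(x;\ell)\,(y-w_a)^\ell\,dy^{1-N}.
\]
Proposition~\ref{prop:Can_norm} on the other hand asserts via \eqref{eq:CanPsiQPeriod} that
\[
\Psi_N^{\Can}(x,y)-\Psi_N^{\Can}(x,\gamma_a y)=\sum_{k:(a,k)\in\J}\Phi_{ak}(x)\,(y-w_a)^k\,dy^{1-N}.
\]
Matching coefficients of $(y-w_a)^\ell\,dy^{1-N}$ forces $\Theta_a^{\Can}(x;\ell)=\Phi_{a\ell}(x)$ when $(a,\ell)\in\J$ and $\Theta_a^{\Can}(x;\ell)=0$ otherwise. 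Observe that this identification is perfectly consistent with Lemma~\ref{lem:Oacont} and the canonical normalization \eqref{eq:CanPhi_norm}, which provides the duality between $\{\Phi_{ak}\}_{\J}$ and the contour integrals extracting $\{O_a(u;\bm{v,y};k)\}_{\J}$.

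Substituting back into the specialization of Theorem~\ref{theor:ZhuGenusg} collapses the double sum $\sum_{a=1}^g\sum_{\ell=0}^{2N-2}\Theta_a^{\Can}(x;\ell)\,O_a(u;\bm{v,y};\ell)$ into the single sum $\sum_{(a,k)\in\J}\Phi_{ak}(x)\,O_a(u;\bm{v,y};k)$, yielding \eqref{eq:ZhuGenusgCan}. The main point to check carefully is that Proposition~\ref{prop:Can_norm} actually produces $\Psi_N^{\Can}$ as an admissible GEM form in the sense of the Poincar\'e sum \eqref{PoincareSum} for an appropriate choice of $f_\ell^{\Can}(x)$; this is precisely the assertion of the paragraph preceding the theorem, and follows from the existence of convergent Poincar\'e presentations of both $\Psi_N^{\Bers}$ and each normalized basis element $\Phi_{ak}$. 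Once this is granted, the remainder is bookkeeping: propagate the replacement $\Psi_N\to\Psi_N^{\Can}$ through the proof of Theorem~\ref{theor:ZhuGenusg} and use the quasi-period \eqref{eq:CanPsiQPeriod} to pin down the holomorphic $N$-form coefficients.
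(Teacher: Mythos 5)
Your proposal is correct, and it reaches the conclusion by a genuinely different identification step than the paper. The paper never computes the canonical coefficients $\Theta^{\Can}_{a}(x;\ell)$ explicitly: it merely expands the leading term $\sum_{b}\Theta^{\Can}_{b}(x)O_{b}(u;\bm{v,y})$ in the basis $\{\Phi_{ak}\}_{\J}$ with unknown coefficients $A_{ak}$, then applies the contour functional $\frac{1}{\tpi}\oint_{\calC_{a}}(\cdot)\,(x-w_{a})^{k}dx^{1-N}$ to both sides of \eqref{eq:ZhuGenusg}; Lemma~\ref{lem:Oacont} evaluates the left-hand side as $O_{a}(u;\bm{v,y};k)$, while the normalizations \eqref{eq:CanPhi_norm} and \eqref{eq:CanPsi_norm} collapse the right-hand side to $A_{ak}$, forcing $A_{ak}=O_{a}(u;\bm{v,y};k)$. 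You instead pin down the coefficients directly by matching the two expressions \eqref{eq:ThetaPsi} and \eqref{eq:CanPsiQPeriod} for the quasi-period $\Psi^{\Can}_{N}(x,y)-\Psi^{\Can}_{N}(x,\gamma_{a}y)$ as polynomials in $y-w_{a}$, which forces $\Theta^{\Can}_{a}(x;\ell)=\Phi_{a\ell}(x)$ for $(a,\ell)\in\J$ and $\Theta^{\Can}_{a}(x;\ell)=0$ otherwise; substitution then gives \eqref{eq:ZhuGenusgCan}. Both arguments rest on Proposition~\ref{prop:Can_norm} together with the fact (which you rightly flag) that $\Psi^{\Can}_{N}$ is realized by an admissible choice of $f^{\Can}_{\ell}(x)$, so that Theorem~\ref{theor:ZhuGenusg} and the derivation of \eqref{eq:ThetaPsi} apply verbatim. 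Your route uses only \eqref{eq:CanPsiQPeriod} and yields strictly more information --- the explicit values of all $\Theta^{\Can}_{a}(x;\ell)$, in particular that the $2N-1$ coefficients with $(a,\ell)\notin\J$ vanish --- whereas the paper's route bypasses that computation entirely by exploiting the duality pairing \eqref{eq:CanPhi_norm} and the vanishing periods \eqref{eq:CanPsi_norm}.
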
 
\begin{proof}
With the canonical choice $\Psi^{\Can}_{N}$ the RHS of \eqref{eq:ZhuGenusg} has leading  contribution
\begin{align*}
\sum_{b=1}^{g} \Theta^{\Can}_{b}(x)O_{b}(u;\bm{v,y})
=\sum_{(a,k)\in\J} \Phi_{ak}(x)A_{ak}(u;\bm{v,y}),
\end{align*}
with $\Theta^{\Can}_{a}(x;\ell)$ determined by \eqref{eq:ThetaPsi} for $\Psi^{\Can}_{N}$ and where $A_{ak}(u;\bm{v,y}) $ is a linear combination of  $O_{b}(u;\bm{v,y};\ell)$ terms. 
But Lemma~\ref{lem:Oacont} and Proposition~\ref{prop:Can_norm} imply that for $(a,k)\in\J$ and with $x_{a}=x-w_{a}$
\begin{align*}
O_{a}(u;\bm{v,y};k)
%=&\frac{1}{\tpi} \oint_{\mathcal{C}_a}\F_{V}^{(g)}(u,x;\bm{v,y})x_{a}^k\,dx^{1-N}
%\\
=&\sum_{(b,l)\in\J} \frac{1}{\tpi} \oint_{\mathcal{C}_a}\Phi_{bl}(x)x_{a}^k\,dx^{1-N}A_{bl}(u;\bm{v,y})
\\
&+\sum_{r=1}^{n}\sum_{j\ge 0}
\frac{1}{\tpi} \oint_{\mathcal{C}_a}\del^{(0,j)}\Psi^{\Can}_{N}(x,y_{r})x_{a}^k\,dx^{1-N}\,\F_{V}^{(g)}(\ldots;u(j)v_{r},y_{r};\ldots)dy_{r}^{j}.
\\
=&A_{ak}(u;\bm{v,y})+0.
\end{align*}
\end{proof}

\section{Some Applications}
\label{sec:applications}
In this section we consider some applications of genus $g$ Zhu recursion. We analyse all $n$-point correlation functions for the rank 1 Heisenberg VOA $M$ using $\Psi_{1}(x,y)=\omega_{y-0}(x)$. 
We also consider Virasoro $n$-point functions for general VOAs based on any choice of $\Psi_{2}(x,y)$. We determine various  genus $g$ conformal Ward identities which are determined in terms of variations with respect to the Schottky parameters $\Cg$ and puncture coordinates on a punctured Riemann surface. 
Thus we explicitly realize  the relationship between Virasoro correlation functions and variations in moduli space exploited in conformal field theory  e.g.~\cite{EO}.
Our analysis leads to several partial differential equations giving the variation with respect to (punctured) Riemann surface moduli of the Heisenberg VOA partition function $Z_{M}^{(g)}$, the bidifferential    $\omega(x,y)$, the projective connection $s(x)$,  
the holomorphic 1-forms   $\nu_{a}$  and the period matrix $\Omega_{ab}$. The deeper relationship between $\Psi_{2}(x,y)$ and the moduli variations  is described in~\cite{T1}. Finally, we compute the genus $g$ partition function for an even  lattice VOA  in terms of the Siegel lattice theta series and $Z_{M}^{(g)}$.
\subsection{The Heisenberg VOA}
%For a  state\footnote{$u$ is quasiprimary since $V$ is of CFT type} $u$ of weight $N=1$  we find from \eqref{eq:1ptfun} and \eqref{eq:ThetaN1} that 
%\begin{align*}
%\F_{V}^{(g)}(u,x)=\sum_{a=1}^{g}\nu_{a}^{(g)}(x)O_{a}(u),
%\end{align*}
%where here
%\begin{align*}
%O_{a}(u)=\frac{1}{\tpi}\oint_{\alpha_{a}}\F_{V}^{(g)}(u,x),
%\end{align*}
%using the normalisation property~\eqref{NuNormalisation} of $\nu_{a}$ and recalling that $\alpha_{a}\sim\calC_{a}$.
%
The rank one Heisenberg VOA $M$  is generated by a weight one vector $h$ with
\begin{align*}
[h(m),h(n)]=m\delta_{m,-n}.
\end{align*}
Genus $g$ Zhu recursion implies (which generalises genus two results in~\cite{MT2,GT})
\begin{proposition}
	The $n$-point differential $\F_{M}^{(g)}(\bm{h,x})=0$  for odd $n$ and for even $n$ is given by
\begin{align*}
\F_{M}^{(g)}(\bm{h,x})= \Sym_{n}\omega(\bm{x}) \; Z_{M}^{(g)},
\end{align*}
where $\bm{h,x}:=h,x_{1};\ldots;h,x_{n}$ and $\Sym_{n}\omega(\bm{x})  $ denotes the symmetric product
\begin{align*}
\Sym_{n}\omega(\bm{x})  =\sum_{\sigma}\prod_{(r,s)}\omega (x_{r},x_{s}),
\end{align*}
where we sum over all fixed-point-free involutions $\sigma=\ldots (rs) \ldots$ of the labels $\{1,2,\ldots,n\}$.
\end{proposition}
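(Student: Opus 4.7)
The plan is to combine the $N=1$ genus $g$ Zhu recursion of Theorem~\ref{theor:ZhuGenusg} with an induction on $n$, supplemented by the $\Z_{2}$ automorphism $\theta$ of $M$ sending $h\mapsto -h$. With the choice of Theorem~\ref{theor:ZhuConvergent} we have $\Psi_{1}(x,y)=\omega_{y-0}(x)$ and $\Theta_{a}(x;0)=\nu_{a}(x)$, and the Heisenberg relations give $h(0)h=0$, $h(1)h=\vac$, and $h(j)h=0$ for $j\ge 2$, so the inner sum over $j$ collapses. Using \eqref{eq:Psi1dy} to rewrite $\del^{(0,1)}\Psi_{1}(x_{1},x_{k})\,dx_{k}=\omega(x_{1},x_{k})$, and the fact that $Y(\vac,z)=\Id$ makes an insertion of $\vac$ trivial (with $\wt(\vac)=0$ contributing no differential), \eqref{eq:ZhuGenusg} specialises to
\begin{align*}
\F_{M}^{(g)}(\bm{h,x})
=\sum_{a=1}^{g}\nu_{a}(x_{1})\,O_{a}(h;h,x_{2};\ldots;h,x_{n};0)
+\sum_{k=2}^{n}\omega(x_{1},x_{k})\,\F_{M}^{(g)}(\bm{h,x}_{\widehat{1},\widehat{k}}),
\end{align*}
where $\bm{h,x}_{\widehat{1},\widehat{k}}$ denotes the $(n-2)$-tuple obtained by omitting the insertions at $x_{1}$ and $x_{k}$.

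First I would dispose of the odd $n$ case using the $\Z_{2}$ automorphism $\theta$. Since $\theta$ fixes $\vac$ and $\omega$ and preserves the Li-Z metric, if $\{b\}$ is a homogeneous basis with Li-Z dual $\{\bar b\}$, then $\{\theta b\}$ is a basis with dual $\{\theta\bar b\}$. Reindexing the basis sum in \eqref{GenusgnPoint}, together with the automorphism property $\theta Y(v,z)\theta^{-1}=Y(\theta v,z)$ applied inside the genus zero correlator, yields $\F_{M}^{(g)}(\bm{h,x})=(-1)^{n}\F_{M}^{(g)}(\bm{h,x})$, hence vanishing for odd $n$.

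For even $n$ I would proceed by induction, the base case $n=0$ being $\F_{M}^{(g)}=Z_{M}^{(g)}$. By Lemma~\ref{lem:Oacont},
\[
O_{a}(h;h,x_{2};\ldots;h,x_{n};0)=\frac{1}{\tpi}\oint_{\calC_{a}}\F_{M}^{(g)}(h,x;h,x_{2};\ldots;h,x_{n}),
\]
which is a contour integral of an $(n+1)$-point $h$-function; since $n+1$ is odd, the previous paragraph forces $O_{a}=0$. Applying the inductive hypothesis to each remaining $(n-2)$-point function gives
\begin{align*}
\F_{M}^{(g)}(\bm{h,x})=\Bigl(\sum_{k=2}^{n}\omega(x_{1},x_{k})\,\Sym_{n-2}\omega(\bm{x}_{\widehat{1},\widehat{k}})\Bigr)Z_{M}^{(g)},
\end{align*}
and the combinatorial identity that a fixed-point-free involution of $\{1,\ldots,n\}$ is uniquely determined by the partner $k$ of $1$ together with a fixed-point-free involution of the remaining $n-2$ indices rearranges this into $\Sym_{n}\omega(\bm{x})\,Z_{M}^{(g)}$.

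The main obstacle is verifying that $\theta$-invariance of the abstract VOA really descends to $(-1)^{n}$-symmetry of the formal genus $g$ correlators of \eqref{GenusgnPoint}; this reduces to checking that $Z^{(0)}$ is unchanged when $\theta$ is applied to every vertex operator argument (which follows from $\theta\vac=\vac$ and conjugation of each $Y(\cdot,z)$ by $\theta$) and that the basis reindexing $\bm{b}_{+}\mapsto\theta\bm{b}_{+}$ preserves the Li-Z duality defining $\bm{b}_{-}$ in \eqref{eq:bbar}. Once this invariance is in place, the rest of the argument is a routine chaining of Zhu recursion and induction.
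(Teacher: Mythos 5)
Your proof is correct, and it reaches the same recursion
\begin{align*}
\F_{M}^{(g)}(\bm{h,x})=\sum_{a=1}^{g}\nu_{a}(x_{1})\,O_{a}(h;\ldots;0)+\sum_{k=2}^{n}\omega(x_{1},x_{k})\,\F_{M}^{(g)}(h,x_{2};\ldots;\widehat{h,x_{k}};\ldots;h,x_{n})
\end{align*}
that the paper uses, but it diverges from the paper in how the $O_{a}$ term is killed and in how odd $n$ is handled. The paper's observation is simply that $h(0)v=0$ for \emph{every} $v\in M$ (as $h(0)$ commutes with all $h(-m)$ and annihilates $\vac$), so each summand $Z^{(0)}(\ldots;h(0)b_{a},w_{a};\ldots)$ in the definition \eqref{XamDef} of $O_{a}(h;\bm{v,y};0)$ vanishes identically — no contour representation, no parity input, and no convergence or meromorphicity hypotheses are needed. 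The odd case then follows from the same induction, terminating at the base case $\F_{M}^{(g)}(h,x)=0$ rather than at $n=0$. Your route instead establishes odd vanishing first via the automorphism $h\mapsto-h$ (which is a correct and self-contained argument, most cleanly seen by choosing the homogeneous basis $\{b_{a}\}$ to consist of $\theta$-eigenvectors so that the reindexing $\bm{b}_{+}\mapsto\theta\bm{b}_{+}$ is trivial), and then feeds that back through Lemma~\ref{lem:Oacont} to conclude $O_{a}=0$ in the even case. What your approach buys is a structural parity statement that works for any VOA with such an involution and does not require knowing the action of $u(0)$; what it costs is the extra machinery of Lemma~\ref{lem:Oacont} (hence implicitly the analytic identification of $\F_{V}^{(g)}(u,x;\bm{v,y})$ as meromorphic in $x$) where a one-line mode computation suffices. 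Everything else — the collapse of the $j$-sum via $h(j)h=\delta_{j1}\vac$, the identification $\del^{(0,1)}\Psi_{1}(x_{1},x_{k})dx_{k}=\omega(x_{1},x_{k})$ from \eqref{eq:Psi1dy}, and the bijection between fixed-point-free involutions of $\{1,\ldots,n\}$ and pairs (partner of $1$, involution of the rest) — matches the paper's proof.
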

\begin{proof}
	Since $h(0)v=0$ for every $v\in M$ it follows that $O(h;\bm{h,x})=0$ so that, in particular, $\F_{M}^{(g)}(h,x)=0$. Theorem~\ref{theor:ZhuGenusg}, \eqref{eq:Psi1dy} and the fact that $h(j)h=\delta_{j1}\vac$ for $j\ge 0$  imply
\begin{align*}
\F_{M}^{(g)}(\bm{h,x})=&\sum_{k=2}^{n}\sum_{j\ge 0}\del^{(0,j)}\Psi_{1}(x_{1},x_{k})\, dx_{k}^{j}\F_{M}^{(g)}(h,x_{2};\ldots;h(j)h,x_{k};\ldots;h,x_{n})\\
=&\sum_{k=2}^{n}\omega(x_{1},x_{k})\F_{M}^{(g)}(h,x_{2};\ldots;\widehat{h,x_{k}};\ldots;h,x_{n}),
\end{align*}
where $h$ at $x_{k}$ is omitted. The proof follows by induction.
\end{proof}
$\F_{M}^{(g)}(\bm{h,x})$ is a generating function for all genus $g$ $n$-point differentials of the Heisenberg VOA much as in~\cite{MT3} (Prop. 3.8). Thus the Virasoro 1-point function can be found as follows. 
Using VOA associativity \eqref{eq:Z0assoc} we firstly note
\begin{align}
\F_{M}^{(g)}(h,x;h,y)&= \sum_{m\in\Z}\F_{M}^{(g)}(h(m)h,y)(x-y)^{-m-1}
\notag
\\
&=\frac{Z_{M}^{(g)}dxdy}{(x-y)^{2}}+\sum_{m\le - 1}\F_{M}^{(g)}(h(m)h,y)(x-y)^{-m-1}.
\label{eq:FMhh}
\end{align}
But  the Virasoro vector  $\omega=\frac{1}{2}h(-1)h$ so that 
\begin{align}
\F_{M}^{(g)}(\omega,x)=&
\lim_{y\rightarrow x}\frac{1}{2}\left(\F_{M}^{(g)}(h,x;h,y)-\frac{Z_{M}^{(g)}dxdy}{(x-y)^{2}}\right)
=\frac{1}{12}s(x)Z_{M}^{(g)},
\label{eq:VirasoroProjConn}
\end{align}
for genus $g$ projective connection  $s(x)$ defined by
\[
s(x):=6\lim_{y\rightarrow x}\left( \omega(x,y)-\frac{dxdy}{(x-y)^2}\right)
=\sum_{\gamma\in\Gamma,\gamma\neq \Id}\frac{d(\gamma x)dx}{(\gamma x-x)^{2}}.
\]
\subsection{Virasoro $1$-point differentials and variations of moduli}
Consider the genus $g$ Virasoro 1-point differential
\begin{align*}
\F_{V}^{(g)}(\omega,x)=\sum_{a=1}^{g} \sum_{\ell=0}^{2}\Theta_{a}(x,\ell)O_{a}(\omega;\ell),
\end{align*}
where $\{ \Theta_{a}(x,\ell) \}$ spans the $3g-3$ dimensional space of holomorphic $2$-forms. In order to analyse the $O_{a}(\omega;\ell)$ terms we define  differential operators
$\del_{a,\ell}$  for $a\in\Ip$ and $\ell=0,1,2$ as follows:
\begin{align}
\label{eq:delael}
\del_{a,0}:=\del_{w_{a}},\quad
\del_{a,1}:=\rho_{a} \del_{\rho_{a}},\quad
\del_{a,2}:=\rho_{a} \del_{w_{-a}}.
\end{align}
These form a canonical basis for the tangent space $T(\Cg)$~\cite{T1}. 
\begin{lemma} For $a\in\Ip$ and $\ell=0,1,2$ we find
	\begin{align}\label{eq:Oaomega}
	O_{a}(\omega;\ell)=\del_{a,\ell}Z_{V}^{(g)}.
	\end{align}
\end{lemma}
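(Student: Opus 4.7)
The plan is to compute $o_{a}(\omega;\ell)=O_{a}(\omega;\ell)$ directly from the definition \eqref{LittleODef} for each of the three values $\ell=0,1,2$, and recognize the result as the corresponding derivative of $Z_{V}^{(g)}$. Since there are no other insertions, $\bm{dy^{\wt(v)}}=1$ and it suffices to show $o_{a}(\omega;\ell)=\del_{a,\ell}Z_{V}^{(g)}$. Recall $Y(\omega,z)=\sum_{n}L(n)z^{-n-2}$, so $\omega(\ell)=L(\ell-1)$, hence
\begin{align*}
o_{a}(\omega;\ell)=\sum_{\bm{b}_{+}}Z^{(0)}(\ldots;L(\ell-1)b_{a},w_{a};\ldots),\qquad \ell=0,1,2.
\end{align*}
These three cases mirror the three identities already used in the proof of Proposition~\ref{prop:ZSL2C}, but without summing over $a\in\I$.

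For $\ell=0$, I would use the VOA translation property $Y(L(-1)b,z)=\del_{z}Y(b,z)$ applied to the genus zero correlation function to obtain $Z^{(0)}(\ldots;L(-1)b_{a},w_{a};\ldots)=\del_{w_{a}}Z^{(0)}(\bm{b,w})$, and then sum over the basis to conclude $o_{a}(\omega;0)=\del_{w_{a}}Z_{V}^{(g)}=\del_{a,0}Z_{V}^{(g)}$. For $\ell=1$, use $L(0)b_{a}=\wt(b_{a})b_{a}$ for homogeneous $b_{a}$, so
\begin{align*}
o_{a}(\omega;1)=\sum_{\bm{b}_{+}}\wt(b_{a})\,Z^{(0)}(\bm{b,w}).
\end{align*}
Since $b_{-a}=\rho_{a}^{\wt(b_{a})}\bbar_{a}$ by \eqref{eq:bbar}, and $Z^{(0)}(\bm{b,w})=\prod_{c\in\Ip}\rho_{c}^{\wt(b_{c})}Z^{(0)}(\ldots;\bbar_{c},w_{-c};b_{c},w_{c};\ldots)$, applying $\rho_{a}\del_{\rho_{a}}$ pulls down precisely the factor $\wt(b_{a})$ (the rational function in the insertion points being independent of $\rho_{a}$). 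Summing over the basis, $o_{a}(\omega;1)=\rho_{a}\del_{\rho_{a}}Z_{V}^{(g)}=\del_{a,1}Z_{V}^{(g)}$.

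For $\ell=2$, the essential tool is Lemma~\ref{AdjointLemma} combined with $L_{\rho_{a}}^{\dagger}(1)=\rho_{a}L(-1)$ from \eqref{RhoAdjoint}. Applying \eqref{eq:adjointrel} to the sum over the $b_{a}$-slot converts $L(1)$ acting on $b_{a}$ at the insertion $w_{a}$ into $\rho_{a}L(-1)$ acting on the dual vector $b_{-a}$ at $w_{-a}$:
\begin{align*}
\sum_{b_{a}}Z^{(0)}(\ldots;L(1)b_{a},w_{a};\ldots)=\rho_{a}\sum_{b_{a}}Z^{(0)}(\ldots;L(-1)b_{-a},w_{-a};\ldots).
\end{align*}
Using translation again converts this to $\rho_{a}\del_{w_{-a}}$ and summing over the remaining slots gives $o_{a}(\omega;2)=\rho_{a}\del_{w_{-a}}Z_{V}^{(g)}=\del_{a,2}Z_{V}^{(g)}$.

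There is no real obstacle; the three cases are essentially packaged versions of the arguments in Proposition~\ref{prop:ZSL2C}. The only point requiring a little care is keeping track of the $\rho_{a}$-dependence in the $\ell=1$ case, where $\rho_{a}$ enters only through the dual vector $b_{-a}=\rho_{a}^{\wt(b_{a})}\bbar_{a}$, which is precisely what produces the factor $\wt(b_{a})$ under $\rho_{a}\del_{\rho_{a}}$.
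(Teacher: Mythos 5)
Your proposal is correct and follows essentially the same route as the paper: identifying $\omega(\ell)=L(\ell-1)$, then using translation for $\ell=0$, the $\rho_{a}$-dependence of $b_{-a}=\rho_{a}^{\wt(b_{a})}\bbar_{a}$ for $\ell=1$, and Lemma~\ref{AdjointLemma} with $L_{\rho_{a}}^{\dagger}(1)=\rho_{a}L(-1)$ followed by translation for $\ell=2$. The observation that these are the single-handle versions of the computations in Proposition~\ref{prop:ZSL2C} is exactly the paper's viewpoint.
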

\begin{proof}
For $\ell=0$ with $\omega(0)=L(-1)$ this follows since
\begin{align*}
O_{a}(\omega;0)&=\sum_{\bm{b_{+}}}Z^{(0)}(\ldots;L(-1)b_{a},w_{a};\ldots)=\del_{w_{a}}Z_{V}^{(g)}.
\end{align*}
Similarly  for $\ell=1$ with $\omega(1)=L(0)$ we have
\begin{align*}
O_{a}(\omega;1)=\sum_{\bm{b_{+}}}Z^{(0)}(\ldots;L(0)b_{a},w_{a};\ldots)=\sum_{\bm{b_{+}}}\wt(b_{a})Z^{(0)}(\bm{b,w}).
\end{align*}
But recalling \eqref{eq:bbar}, it follows that $\rho_{a}\del_{\rho_{a}}b_{-a}=\wt(b_{a})b_{-a}$ so that
\begin{align*}
O_{a}(\omega;1)=\rho_{a}\del_{\rho_{a}}Z_{V}^{(g)}.
\end{align*}
Lastly, for $\ell=2$ with $\omega(2)=L(1)$ and  using the adjoint relations \eqref{eq:adjointrel} and \eqref{RhoAdjoint} we find
\begin{align*}
O_{a}(\omega;2)&=\sum_{\bm{b_{+}}}Z^{(0)}(\ldots;L(1)b_{a},w_{a};\ldots)
\\
&=\sum_{\bm{b_{+}}}\rho_{a}Z^{(0)}(\ldots;L(-1)b_{-a},w_{-a};\ldots)
\\
&
=\sum_{\bm{b_{+}}}\rho_{a}\del_{w_{-a}}Z^{(0)}(\bm{b,w})
=\rho_{a}\del_{w_{-a}}Z_{V}^{(g)}.
\end{align*}
\end{proof}
Define the  differential operator ~\cite{GT, T1}
\begin{align}\label{eq:NablaDef}
\delCg(x):=\sum_{a=1}^{g}\sum_{\ell=0}^{2}\Theta_{a}(x,\ell)\del_{a,\ell}.
\end{align}
\begin{proposition}\label{prop:Virasoro1pt}
The Virasoro $1$-point differential is given by 
\begin{align}\label{eq:VirasoroNabla}
\F_{V}^{(g)}(\omega,x)=\delCg(x)Z_{V}^{(g)}.
\end{align}
\end{proposition}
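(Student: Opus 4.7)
The proposition is essentially a direct consequence of results already established, so the plan is short: I would simply combine the 1-point specialization of genus $g$ Zhu recursion with the formula for the modes $O_a(\omega;\ell)$ that was just proved, and the definition of the operator $\delCg(x)$.

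First I would invoke Theorem~\ref{theor:ZhuGenusg} applied to the quasiprimary vector $u = \omega$, which has conformal weight $N = 2$. Since there are no other insertions, the second sum in \eqref{eq:ZhuGenusg} is empty, and Remark~\ref{rem:MainTheorem} (equation \eqref{eq:1ptfun}) gives
\begin{align*}
\F_{V}^{(g)}(\omega,x) = \sum_{a=1}^{g} \Theta_{a}(x)\, O_{a}(\omega),
\end{align*}
which, since $N = 2$ means $O_a(\omega)$ has components $O_a(\omega;\ell)$ for $\ell = 0, 1, 2$, expands as
\begin{align*}
\F_{V}^{(g)}(\omega,x) = \sum_{a=1}^{g} \sum_{\ell=0}^{2} \Theta_{a}(x,\ell)\, O_{a}(\omega;\ell).
\end{align*}

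Next I would substitute the identity $O_{a}(\omega;\ell) = \del_{a,\ell} Z_{V}^{(g)}$ from the preceding Lemma (equation \eqref{eq:Oaomega}) into each term of the finite double sum. This yields
\begin{align*}
\F_{V}^{(g)}(\omega,x) = \sum_{a=1}^{g} \sum_{\ell=0}^{2} \Theta_{a}(x,\ell)\, \del_{a,\ell} Z_{V}^{(g)}.
\end{align*}
Finally, by the definition \eqref{eq:NablaDef} of the differential operator $\delCg(x)$, the right-hand side is exactly $\delCg(x) Z_{V}^{(g)}$, which proves \eqref{eq:VirasoroNabla}.

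There is no real obstacle here; the proposition is packaging the preceding lemma and the 1-point case of the main Zhu recursion theorem into a single statement phrased in terms of the canonical differential operator. The substantive content was already absorbed into Theorem~\ref{theor:ZhuGenusg} (which produces the $\Theta_a(x,\ell)$ coefficients as holomorphic 2-differentials spanning the $3g-3$ dimensional space) and into the identification of the zero, first, and second modes of $\omega$ with the basis tangent vectors $\del_{a,\ell}$ of $T(\Cg)$ via the adjoint relation $L^\dagger_{\rho_a}(1) = \rho_a L(-1)$ used in the $\ell = 2$ case.
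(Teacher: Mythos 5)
Your proof is correct and follows exactly the route the paper takes: the paper presents this proposition as an immediate consequence of the $1$-point case \eqref{eq:1ptfun} of Theorem~\ref{theor:ZhuGenusg} with $u=\omega$, $N=2$, the identity $O_{a}(\omega;\ell)=\del_{a,\ell}Z_{V}^{(g)}$ from the preceding lemma, and the definition \eqref{eq:NablaDef} of $\delCg(x)$. Nothing is missing.
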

\begin{remark}
	\eqref{eq:VirasoroNabla} is a  generalization of the    formula  
	$Z^{\Zhu}_{V}({\omega},x;q)=q\del_{q} Z^{\Zhu}_{V}(q)$ at genus one~\cite{Z}
	and a similar genus two result  in~\cite{GT}.  
\end{remark}
The geometrical meaning of $\delCg(x)$ in terms of variations in the Schottky parameter space $\Cg$ and the dependence of $\delCg(x)$ on the choice of GEM form $\Psi_{2}$ is   discussed in ~\cite{T1}. In particular,  if we consider a new GEM form $\widecheck{\Psi}_{2}(x,y)=\Psi_{2}(x,y)-\Phi(x)p(y)dy^{-1}$ for a holomorphic 2-form $\Phi(x) $ and polynomial $p(y)\in \Pi_{2}(y)$, then one finds 
\begin{align*}
\delCgcheck(x)=\delCg(x)+\Phi(x)\mathcal{L}_{p}^{\Cg},
\end{align*}
for $\slLie_{2}(\C)$ generator $\mathcal{L}_{p}^{\Cg}$  of \eqref{eq:calLdef}. Thus   Proposition~\ref{prop:ZSL2C} implies \eqref{eq:VirasoroNabla} holds for all choices of  $\Psi_{2}$. In general, $\SL_{2}(\C)$ invariance of $\Cg$ of \eqref{eq:Mobwrhoa} implies that $\delCg(x)$ defines a natural differential operator $\delMg$ on the $\Sg$ moduli space $\Mg$~\cite{T1}. In particular, for the canonical GEM form of Proposition~\ref{prop:Can_norm}  we have
\begin{align}
\label{eq:delSchottky}
\delMg(x)=\sum_{(a,\ell)\in\J}\Phi_{a\ell}(x)\partial_{a,\ell},
\end{align} 
with canonical holomorphic 2-form basis\footnote{See footnote~\ref{foot:Dual basis}} $\{\Phi_{a\ell}\} _{(a,\ell)\in\J}$ where $\J$ is a set of $3g-3$ distinct $(a,\ell)$ values as described in \S\ref{sect:normalized basis}.  In fact, for any moduli coordinates $\{m_{r}\} _{r=1}^{3g-3}$ on $\Mg$ there exists a corresponding  2-form basis  $\{\Phi_{r}\} _{r=1}^{3g-3}$ such that ~\cite{T1,O}
\begin{align}
\label{eq:nablaMg}
\delMg(x)=\sum_{r=1}^{3g-3}\Phi_{r}(x)\partial_{m_{r}}.
\end{align}
In summary Proposition~\ref{prop:Virasoro1pt} and \ref{prop:nptSL2CProp} imply:
\begin{proposition}\label{prop:nablaMgZ}
The Virasoro $1$-point differential is given by 
\begin{align}\label{eq:delMg}
\F_{V}^{(g)}(\omega,x)=\delMg(x)Z_{V}^{(g)}.
\end{align}
\end{proposition}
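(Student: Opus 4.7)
The plan is to deduce the statement directly from Proposition~\ref{prop:Virasoro1pt} together with the $\SL_2(\C)$ invariance of $Z_V^{(g)}$ established in Proposition~\ref{prop:ZSL2C}. In effect, $\delCg(x)$ lives on the Schottky parameter space $\Cg$, while $\delMg(x)$ is the operator it induces on the moduli space $\Mg=\Cg/\SL_2(\C)$; the claim is that the two agree when applied to the $\SL_2(\C)$-invariant function $Z_V^{(g)}$.

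First, I would invoke Proposition~\ref{prop:Virasoro1pt} to write
\begin{align*}
\F_V^{(g)}(\omega,x)=\delCg(x)Z_V^{(g)}
\end{align*}
for some choice of GEM form $\Psi_2(x,y)$ used to assemble $\delCg(x)$ via \eqref{eq:NablaDef}. Next I would recall the dependence of $\delCg(x)$ on this choice: replacing $\Psi_2$ by $\widecheck{\Psi}_2(x,y)=\Psi_2(x,y)-\Phi(x)p(y)dy^{-1}$, with $\Phi$ a holomorphic $2$-form and $p\in\Pi_2(y)$, shifts the operator to $\delCgcheck(x)=\delCg(x)+\Phi(x)\mathcal{L}_p^{\Cg}$, as noted immediately after Proposition~\ref{prop:Virasoro1pt}. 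By Proposition~\ref{prop:ZSL2C} we have $\mathcal{L}_p^{\Cg} Z_V^{(g)}=0$ for every $p\in\Pi_2$, so the shift is invisible on $Z_V^{(g)}$.

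Thus I would specialise the GEM form to the canonical one $\Psi_2^{\Can}$ of Proposition~\ref{prop:Can_norm}. With that choice $\Theta_a(x,\ell)$ is replaced by the canonical holomorphic $2$-form basis $\{\Phi_{a\ell}\}_{(a,\ell)\in\J}$, and \eqref{eq:NablaDef} collapses to the form \eqref{eq:delSchottky}, namely $\delMg(x)=\sum_{(a,\ell)\in\J}\Phi_{a\ell}(x)\partial_{a,\ell}$. Combining this with the identity $\F_V^{(g)}(\omega,x)=\delCg(x)Z_V^{(g)}$ gives $\F_V^{(g)}(\omega,x)=\delMg(x)Z_V^{(g)}$, as desired. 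Finally, I would remark that the resulting operator is independent of the choice of moduli coordinates: any other basis $\{\Phi_r\}_{r=1}^{3g-3}$ adapted to moduli $\{m_r\}$ yields the same action by \eqref{eq:nablaMg}, again because the ambiguity lies in the kernel of $\mathcal{L}_p^{\Cg}$ acting on $\SL_2(\C)$-invariants.

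The only substantive point, and the one I would treat most carefully, is the transition from $\delCg(x)$ on $\Cg$ to the well-defined operator $\delMg(x)$ on $\Mg$. The algebraic part of this transition -- that the ambiguity is an $\slLie_2(\C)$-valued term annihilating $Z_V^{(g)}$ -- is immediate from the facts already assembled; the geometric content, namely that the canonical basis $\{\Phi_{a\ell}\}_{\J}$ descends from $\Cg$ to a basis of holomorphic $2$-forms on $\Sg$ dual to moduli coordinates on $\Mg$, is the input imported from \cite{T1} via \eqref{eq:delSchottky} and \eqref{eq:nablaMg}. Once these are in hand, the proof is a one-line combination.
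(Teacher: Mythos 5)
Your proposal is correct and follows essentially the same route as the paper, which likewise derives the result by combining Proposition~\ref{prop:Virasoro1pt} with the observation that changing the GEM form shifts $\delCg(x)$ by $\Phi(x)\mathcal{L}_{p}^{\Cg}$, a term that annihilates $Z_{V}^{(g)}$ by $\SL_{2}(\C)$ invariance, so that one may pass to the canonical GEM form and identify the operator with $\delMg(x)$ via \eqref{eq:delSchottky}. The only cosmetic difference is that you cite Proposition~\ref{prop:ZSL2C} where the paper cites Proposition~\ref{prop:nptSL2CProp}; for the partition function these coincide.
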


\medskip

For  the rank one Heisenberg VOA $M$, using~\eqref{eq:VirasoroProjConn} and~\eqref{eq:VirasoroNabla} we obtain  a generalization of a genus two result~\cite{GT}
\begin{proposition} The genus $g$ partition function for the rank one Heisenberg VOA obeys the linear partial differential equation
\begin{align}\label{prop:BosonDE}
\left(\delMg(x)-\frac{1}{12}s(x)\right)Z_{M}^{(g)}=0.
\end{align}
\end{proposition}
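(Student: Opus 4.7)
The proof is essentially immediate from two results already established in the excerpt, so my plan is simply to combine them cleanly.

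First, I would invoke Proposition~\ref{prop:nablaMgZ} applied to the rank one Heisenberg VOA $M$, which gives the general identity
\[
\F_{M}^{(g)}(\omega,x) = \delMg(x)\, Z_{M}^{(g)}.
\]
This holds for any VOA of the type under consideration and follows from genus $g$ Zhu recursion applied to the Virasoro vector together with Lemma on $O_a(\omega;\ell) = \del_{a,\ell}Z_V^{(g)}$ and the passage from $\Cg$ to $\Mg$ via the canonical GEM form.

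Second, I would invoke the computation \eqref{eq:VirasoroProjConn} specific to the Heisenberg VOA, namely
\[
\F_{M}^{(g)}(\omega,x) = \tfrac{1}{12}s(x)\, Z_{M}^{(g)},
\]
which was derived from the dissociation $\omega = \tfrac{1}{2}h(-1)h$, the associativity expansion \eqref{eq:FMhh} for $\F_{M}^{(g)}(h,x;h,y)$, and the definition of the projective connection as the regularized limit $6\lim_{y\to x}(\omega(x,y)-(x-y)^{-2}dxdy)$.

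Equating the two expressions for $\F_{M}^{(g)}(\omega,x)$ and rearranging yields
\[
\left(\delMg(x)-\tfrac{1}{12}s(x)\right)Z_{M}^{(g)} = 0,
\]
which is the claimed partial differential equation. There is no real obstacle here: all the substantive work was done upstream in establishing Proposition~\ref{prop:nablaMgZ} (the $\SL_2(\C)$ covariance ensuring $\delCg$ descends to $\delMg$) and in computing the Heisenberg Virasoro one-point function via the two-point function. The only thing worth flagging explicitly in the write-up is that both sides of the equated identity are honest meromorphic (in fact holomorphic, for the projective connection on the Schottky domain) 2-differentials in $x$, so the equality of coefficients against $Z_{M}^{(g)}$ makes sense as a differential equation on the moduli space $\Mg$.
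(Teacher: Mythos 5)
Your proposal matches the paper's own derivation exactly: the paper obtains the equation by combining \eqref{eq:VirasoroProjConn}, i.e. $\F_{M}^{(g)}(\omega,x)=\tfrac{1}{12}s(x)Z_{M}^{(g)}$, with the Virasoro one-point formula $\F_{M}^{(g)}(\omega,x)=\delMg(x)Z_{M}^{(g)}$ of Proposition~\ref{prop:nablaMgZ} (equivalently \eqref{eq:VirasoroNabla} together with the descent from $\Cg$ to $\Mg$). No gaps; this is the intended argument.
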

\begin{remark}
We note  that for $\widetilde{R}$ of \eqref{eq:Rtilde} with $N=1$ then 
	$Z_{M}^{(g)}=\det(1-\widetilde{R})^{-\frac{1}{2}}$~\cite{TZ}. 
	This is discussed further in~\cite{T2}.
\end{remark}

\subsection{Genus $g$ Ward identities}
We define a derivative operator on the parameter space $\Cgn:=\Cg\times(\Sg)^{n}$ for a Riemann surface $\Sgn$ with $n$  punctures  $y_{1},\ldots,y_{n}$~\cite{T1,O}
\begin{align}
\label{eq:nablaCgn}
\delCgn(x):=\delCg(x)+\sum_{k=1}^{n}\Psi_{2}(x,y_{k})dy_{k}\,\partial_{y_{k}}.
\end{align}
The quasiperiodicity property \eqref{eq:Psigamy} implies that $\delCgn(x)$ is a vector field on $\Cgn$~\cite{T1,O}. Using Theorems \ref{theor:ZhuGenusg} and \ref{theor:ZhuConvergent}, we can derive genus $g$ Ward identities of physics e.g.~\cite{EO}. 
\begin{proposition}\label{prop:nptWard}
For primary vectors $v_{k}$ of weight $\wt(v_{k})$ for $k=1,\ldots,n$, we find 
\begin{align}\label{eq:Ward1}
\F_{V}^{(g)}(\omega,x;\bm{v,y})=\left(
\delCgn(x)
+\sum_{k=1}^{n}\wt(v_{k})dy_{k}\,\Psi_{2}^{(0,1)}(x,y_{k})
\right)\F_{V}^{(g)}(\bm{v,y}).
\end{align}
\end{proposition}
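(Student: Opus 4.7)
The plan is to apply Theorem~\ref{theor:ZhuGenusg} with $u=\omega$ (so $N=2$) to the $(n+1)$-point differential $\F_V^{(g)}(\omega,x;\bm{v,y})$ and exploit the primary hypothesis to collapse the inner sum over modes to just $j=0,1$. Indeed, writing $\omega(j)v_k=L(j-1)v_k$, primality of $v_k$ forces $L(m)v_k=0$ for all $m\geq 1$, so only $j=0$ and $j=1$ contribute, and those give $\omega(0)v_k=L(-1)v_k$ and $\omega(1)v_k=\wt(v_k)v_k$ respectively. Using the VOA translation property $Y(L(-1)v,z)=\partial_z Y(v,z)$, the $j=0$ insertion at $y_k$ pulls out $\partial_{y_k}$ acting on $\F_V^{(g)}(\bm{v,y})$ (with a compensating $dy_k$ because $L(-1)v_k$ has weight $\wt(v_k)+1$), while the $j=1$ term simply multiplies $\F_V^{(g)}(\bm{v,y})$ by $\wt(v_k)$.

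Next I would handle the leading term $\sum_{a=1}^g \Theta_a(x)O_a(\omega;\bm{v,y})$ by repeating the argument used for the partition function right before Proposition~\ref{prop:Virasoro1pt}. Concretely, I would verify that for every $a\in\Ip$ and $\ell\in\{0,1,2\}$,
\begin{equation*}
O_a(\omega;\bm{v,y};\ell)=\partial_{a,\ell}\,\F_V^{(g)}(\bm{v,y}),
\end{equation*}
with $\partial_{a,\ell}$ as in \eqref{eq:delael}. The $\ell=0$ case uses $\omega(0)=L(-1)$ and VOA translation on the $b_a$-insertion at $w_a$; the $\ell=1$ case uses $L(0)b_a=\wt(b_a)b_a$ together with $\wt(b_a)b_{-a}=\rho_a\partial_{\rho_a}b_{-a}$ via the definition \eqref{eq:bbar} of $b_{-a}$; the $\ell=2$ case uses Lemma~\ref{AdjointLemma} with $L_{\rho_a}^\dagger(1)=\rho_a L(-1)$, transferring $L(1)$ from the $b_a$-slot at $w_a$ to $L(-1)$ in the $b_{-a}$-slot at $w_{-a}$ and converting the latter into $\rho_a\partial_{w_{-a}}$. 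Crucially, none of the $\partial_{a,\ell}$ touches the external variables $y_k$, so $\partial_{a,\ell}$ commutes with the form factor $\bm{dy^{\wt(v)}}$ and the equality upgrades from correlation functions to correlation differentials.

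Summing over $a$ and $\ell$ against $\Theta_a(x;\ell)$ then produces exactly $\delCg(x)\F_V^{(g)}(\bm{v,y})$ by definition \eqref{eq:NablaDef}. Combining with the contribution from the $v_k$-slots computed in the first paragraph gives
\begin{equation*}
\F_V^{(g)}(\omega,x;\bm{v,y})=\delCg(x)\F_V^{(g)}(\bm{v,y})+\sum_{k=1}^n\Psi_2(x,y_k)\,dy_k\,\partial_{y_k}\F_V^{(g)}(\bm{v,y})+\sum_{k=1}^n\wt(v_k)\,dy_k\,\Psi_2^{(0,1)}(x,y_k)\F_V^{(g)}(\bm{v,y}),
\end{equation*}
and the first two sums are precisely $\delCgn(x)\F_V^{(g)}(\bm{v,y})$ by \eqref{eq:nablaCgn}, yielding \eqref{eq:Ward1}.

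The main obstacle, insofar as there is one, is the bookkeeping of weights of differentials on the $y_k$-insertion: one must verify that the $dy_k^{-1}$ hidden in $\Psi_2(x,y_k)$ and the $dy_k^{\wt(v_k)+1}$ coming from $L(-1)v_k$ combine cleanly with the form factor $\bm{dy^{\wt(v)}}$ to reproduce $dy_k\,\partial_{y_k}\F_V^{(g)}(\bm{v,y})$ (and similarly for the $j=1$ term). Everything else is a direct specialization of Theorem~\ref{theor:ZhuGenusg} together with the three adjoint computations already carried out in the proof of \eqref{eq:Oaomega}.
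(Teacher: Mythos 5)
Your proposal is correct and follows exactly the route the paper intends: the paper states Proposition~\ref{prop:nptWard} without a written proof, pointing only to Theorem~\ref{theor:ZhuGenusg}, and your argument — specializing to $u=\omega$, using primality to truncate to $j=0,1$, and extending the three adjoint computations behind \eqref{eq:Oaomega} to $n$-point functions so that the leading term becomes $\delCg(x)\F_{V}^{(g)}(\bm{v,y})$ — is precisely that derivation. The bookkeeping of the $dy_{k}$ weights that you flag does work out as you describe, so there is nothing to add.
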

We also have a Ward identity for Virasoro vector $\omega$ insertions at  $x,y_{1},y_{2},\ldots,y_{n} $.
\begin{proposition}\label{prop:omWard}
The genus $g$ Virasoro $n$-point differential obeys the identity
\begin{align}
\F_{V}^{(g)}(\omega,x;\bm{\omega,y})&
=\left(\delCgn(x)
+2\sum_{k=1}^{n}dy_{k}\,\Psi_{2}^{(0,1)}(x,y_{k})
\right)\F_{V}^{(g)}(\bm{\omega,y})
\notag
\\
&\quad+\frac{C}{2}\sum_{k=1}^{n}\Lambda_{2}(x,y_{k})\F_{V}^{(g)}(\ldots;\widehat{\omega,y_{k}};\ldots),\label{eq:Ward2}
\end{align}
for $\Lambda_{N}$ of \eqref{eq:LambdaN} and central charge $C$  and where $\omega$ at $y_{k}$ is omitted.
\end{proposition}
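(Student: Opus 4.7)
The plan is to apply Theorem~\ref{theor:ZhuGenusg} to $\F_{V}^{(g)}(\omega,x;\bm{\omega,y})$ with $u=\omega$ of weight $N=2$, and then process the finitely many nonzero terms using the Virasoro OPE on $\omega$. Recall that
\begin{align*}
\omega(0)\omega = L(-1)\omega,\qquad \omega(1)\omega = 2\omega,\qquad \omega(2)\omega = 0,\qquad \omega(3)\omega = \tfrac{C}{2}\vac,
\end{align*}
and $\omega(j)\omega=0$ for $j\ge 4$. Thus the $j$-sum in \eqref{eq:ZhuGenusg} collapses to four explicit contributions per insertion at $y_k$.

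First I would dispatch the leading term $\sum_{a=1}^{g}\Theta_{a}(x)O_{a}(\omega;\bm{\omega,y})$ by re-running the argument of Lemma~\ref{lem:Oacont}/the proof of \eqref{eq:Oaomega} in the presence of the extra insertions. The vectors $\bm{\omega,y}$ are inert with respect to the adjoint manipulations on the $b_{a},b_{-a}$ indices, so the identical computations give $O_{a}(\omega;\bm{\omega,y};\ell)=\del_{a,\ell}\F_{V}^{(g)}(\bm{\omega,y})$ for $\ell=0,1,2$. Summing over $a$ with $\Theta_{a}(x,\ell)$ produces $\delCg(x)\F_{V}^{(g)}(\bm{\omega,y})$ by the definition \eqref{eq:NablaDef}.

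Next I would match each of the four nonzero $\omega(j)\omega$ pieces in the Zhu sum. For $j=0$ the VOA translation property $Y(L(-1)\omega,y_k)=\partial_{y_k}Y(\omega,y_k)$, together with the fact that $L(-1)\omega$ has weight $3$ (so the form degree at $y_k$ is $dy_k^{3}=dy_k\cdot dy_k^{2}$), gives $\Psi_{2}(x,y_k)dy_k\,\partial_{y_k}\F_{V}^{(g)}(\bm{\omega,y})$. Combined with the leading $\delCg(x)$ contribution above and the definition \eqref{eq:nablaCgn} of $\delCgn(x)$, these assemble into $\delCgn(x)\F_{V}^{(g)}(\bm{\omega,y})$. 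For $j=1$, substituting $\omega(1)\omega=2\omega$ immediately yields $2\,dy_k\,\Psi_{2}^{(0,1)}(x,y_k)\F_{V}^{(g)}(\bm{\omega,y})$; for $j=2$ the term vanishes. For $j=3$, we use $\omega(3)\omega=\tfrac{C}{2}\vac$ together with $Y(\vac,y_k)=\Id_{V}$, so the insertion at $y_k$ is removed, and invoke the key identity \eqref{eq:LambdaNdlPsi} at $N=2$, namely $dy_k^{3}\,\partial^{(0,3)}\Psi_{2}(x,y_k)=\Lambda_{2}(x,y_k)$, giving $\tfrac{C}{2}\Lambda_{2}(x,y_k)\F_{V}^{(g)}(\ldots;\widehat{\omega,y_k};\ldots)$.

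Summing over $k$ and collecting the four types of contributions yields precisely \eqref{eq:Ward2}. The only delicate bookkeeping, which I would verify explicitly, is the matching of differential form weights: $\Psi_{2}(x,y)$ carries weight $(2,-1)$ in $(dx,dy)$, and the weights $\wt(\omega)=2$, $\wt(L(-1)\omega)=3$, $\wt(\vac)=0$ must combine consistently with the $dy_k^{j}$ factors in \eqref{eq:ZhuGenusg} to reproduce the form degrees on both sides. This is routine but is where the factor $dy_k$ in the $\delCgn$ term and the identification of $dy_k^{3}\,\partial^{(0,3)}\Psi_{2}$ with $\Lambda_{2}$ are forced; there are no further subtleties, and in particular the identity holds for any choice of GEM form $\Psi_{2}$, since both sides transform compatibly under the modifications described in Section~\ref{sect:normalized basis}.
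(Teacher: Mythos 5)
Your proposal is correct and follows exactly the route the paper intends (the paper leaves this derivation implicit, saying only that the Ward identities follow from Theorems~\ref{theor:ZhuGenusg} and \ref{theor:ZhuConvergent}): apply genus $g$ Zhu recursion with $u=\omega$, $N=2$, use $\omega(0)\omega=L(-1)\omega$, $\omega(1)\omega=2\omega$, $\omega(2)\omega=0$, $\omega(3)\omega=\tfrac{C}{2}\vac$, identify the leading term plus the $j=0$ piece with $\delCgn(x)$ via the argument of \eqref{eq:Oaomega} extended to $n$-point functions, and invoke \eqref{eq:LambdaNdlPsi} for the $j=3$ piece. The bookkeeping of form degrees you flag is handled correctly, so there is nothing to add.
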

\begin{remark}\label{rem:WardModule}
Propositions~\ref{prop:nptWard} and \ref{prop:omWard}  are generalizations of genus two results in~\cite{GT}. 
We also note that similar expressions hold for $U$-module $n$-point functions for a subVOA $U$ of $V$ of \eqref{eq:Z_Walpha} by Theorem~\ref{theor:ZhuGenusgModules}.
\end{remark}
The Ward identities \eqref{eq:Ward1} and  \eqref{eq:Ward2} are independent of the choice of GEM function as follows. 
Consider a new GEM form $\widecheck{\Psi}_{2}(x,y)=\Psi_{2}(x,y)-\Phi(x)p(y)dy^{-1}$ for a holomorphic 2-form $\Phi(x) $ and polynomial $p(y)\in \Pi_{2}(y)$. Then one finds~\cite{T1}
\begin{align*}
\delCgncheck(x)=\delCgn(x)+\Phi(x)\mathcal{L}_{p}^{\Cgn},
\end{align*}
for $\mathcal{L}_{p}^{\Cgn}$  of \eqref{eq:calLdef}. Thus with the new GEM form  $\widecheck{\Psi}_{2}(x,y)$ equation  \eqref{eq:Ward1} implies
\begin{align*}
\widecheck{\F}_{V}^{(g)}(\omega,x;\bm{v,y})-{\F}_{V}^{(g)}(\omega,x;\bm{v,y})
=
\Phi(x)\left(\mathcal{L}_{p}^{\Cgn}-\sum_{k=1}^{n}\wt(v_{k})\del_{{k}} p(y_{k})\right)
\F_{V}^{(g)}(\bm{v,y})=0,
\end{align*}
using $\SL_{2}(\C)$ invariance  \eqref{eq:Lpnpt}. A similar argument applies to  \eqref{eq:Ward2} recalling that $\Lambda_{2}$ is independent of the choice of GEM form using \eqref{eq:LambdaNdlPsi}.
As before, $\SL_{2}(\C)$ invariance of $\Cgn$  implies that $\delCgn(x)$ determines a  differential operator $\delMgn$ on $\Sgn$ moduli space $\Mgn$ where, in terms of a canonical GEM form, we have~\cite{T1,O} 
\begin{align}\label{eq:delMgn}
\delMgn(x)=\delMg(x)+\sum_{k=1}^{n}dy_{k}\Psi_{2}^{{\Can}}(x,y_{k})\partial_{y_{k}}.
\end{align}
\begin{remark}\label{rem:Mgn}
Much as for Proposition~\ref{prop:nablaMgZ}, $\SL_{2}(\C)$ invariance of \eqref{eq:Ward1} and \eqref{eq:Ward2} imply we may choose a canonical GEM form $\Psi_{2}^{{\Can}}(x,y_{k})$ and replace $\delCgn$ by $\delMgn$ in Propositions~\ref{prop:nptWard} and \ref{prop:omWard}.
\end{remark}

\subsection{Some genus $g$ linear partial differential equations}
We now exploit some Heisenberg and Virasoro $n$-point correlation functions to obtain linear partial  differential equations for the projective connection $s(x)$, the bidifferential form $\omega(x,y)$, the holomorphic 1-form $\nu_{a}(x)$ and the period matrix $\Omega_{ab}$ given by
\begin{align}\label{eq:Omdef}
\tpi\Omega_{ab}:=\oint_{\beta_{a}}\nu_{b},  \qquad a,b\in\Ip.
\end{align}  
\begin{proposition}\label{prop:delsomnuOm}
$s(x)$, $\omega(x,y)$, $\nu_{a}(x)$ and $\Omega_{ab}$ satisfy the  differential equations:
\begin{align}
\label{omegaDE}
&\left(\delCgone(x) +2 \Psi_{2}^{(0,1)}(x,y)dy\right)s(y)
=6 \left(\omega(x,y)^{2}-\Lambda_{2}(x,y)\right),  
\\[3pt]
&\left(\delCgtwo(x) + \Psi_{2}^{(0,1)}(x,y)dy+  \Psi_{2}^{(0,1)}(x,z)dz\right)\omega(y,z)=\omega(x,y)\omega(x,z),\label{NablaOmega}
\\[3pt]
&\left(\delCgone(x) +\Psi_{2}^{(0,1)}(x,y)dy\right)\nu_{a}(y)=\omega(x,y)\nu_{a}(x),\quad  a\in\Ip, \label{NablaNuDifferentialEqn}
\\[3pt]
&\; \tpi\delCg(x)\Omega_{ab}=\nu_{a}(x)\nu_{b}(x),\quad  a,b\in\Ip,\label{eq:Rauch}
\end{align}
where $\delCgone(x)=\delCg(x)+dy\,\Psi_{2}(x,y)\partial_{y}$ and $\delCgtwo(x)=\delCgone(x) +dz\,\Psi_{2}(x,z)\partial_{z}$.
\end{proposition}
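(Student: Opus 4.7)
\emph{Strategy.} I would derive all four identities from the Virasoro Ward identities of Propositions~\ref{prop:nptWard} and \ref{prop:omWard} specialised to the Heisenberg VOA $M$ (which has central charge $C = 1$), using the closed-form expression $\F_M^{(g)}(\bm{h,x}) = \Sym_n\omega(\bm{x})\,Z_M^{(g)}$ and the identity $\F_M^{(g)}(\omega,x) = \tfrac{1}{12}s(x)\,Z_M^{(g)}$ from \eqref{eq:VirasoroProjConn}. Leibniz applied to $\nabla_{\Cg}(x)Z_M^{(g)} = \tfrac{1}{12}s(x)Z_M^{(g)}$ (Proposition~\ref{prop:BosonDE}) will repeatedly produce an unwanted $\tfrac{1}{12}s(x)$ cross-term that cancels against a directly computed VOA correlator. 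Note also that $\nabla_{\Cgn}(x) Z_M^{(g)} = \nabla_{\Cg}(x) Z_M^{(g)}$ because $Z_M^{(g)}$ is independent of the puncture coordinates.

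\emph{Step 1: equation \eqref{omegaDE}.} Specialising Proposition~\ref{prop:omWard} with $n = 1$ expresses $\F_M^{(g)}(\omega,x;\omega,y)$ in terms of $\F_M^{(g)}(\omega,y) = \tfrac{1}{12}s(y)Z_M^{(g)}$ and $\tfrac12 \Lambda_2(x,y)Z_M^{(g)}$. Independently, write $\omega = \tfrac{1}{2}h(-1)h$ and extract $\F_M^{(g)}(\omega,x;\omega,y)$ from the $4$-point Wick expansion of $\F_M^{(g)}(h,x;h,x';h,y;h,y')$ by subtracting the double poles at $x' \to x$ and at $y' \to y$ as in \eqref{eq:VirasoroProjConn}. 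The Wick sum has three terms; the self-contractions at $x$ and at $y$ each produce a factor $\tfrac{1}{12}s(\cdot)$ after regularisation, while the two cross-contractions give $2\omega(x,y)^2$, so the result is $\bigl(\tfrac{1}{144}s(x)s(y) + \tfrac{1}{2}\omega(x,y)^2\bigr)Z_M^{(g)}$. Equating the two expressions, applying Leibniz, cancelling the $s(x)s(y)$ terms, and dividing by $Z_M^{(g)}$ gives \eqref{omegaDE}.

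\emph{Step 2: equation \eqref{NablaOmega}.} Proposition~\ref{prop:nptWard} with $n = 2$, $v_1 = v_2 = h$ (weight-1 primary) provides one expression for $\F_M^{(g)}(\omega,x;h,y;h,z)$. Extract the same correlator independently by a single-OPE regularisation at $x' \to x$ from the $4$-point function $\F_M^{(g)}(h,x;h,x';h,y;h,z)$; the Wick sum yields $\bigl(\tfrac{1}{12}s(x)\omega(y,z) + \omega(x,y)\omega(x,z)\bigr)Z_M^{(g)}$. Equating, applying Leibniz, cancelling the $\tfrac{1}{12}s(x)\omega(y,z)$ cross-terms and dividing by $Z_M^{(g)}$ delivers \eqref{NablaOmega}.

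\emph{Steps 3--4: equations \eqref{NablaNuDifferentialEqn} and \eqref{eq:Rauch}, and the main obstacle.} I would obtain these by successively integrating \eqref{NablaOmega} over $z \in \beta_a$, and then the result over $y \in \beta_b$, using $\oint_{\beta_a}\omega(x,z) \propto \nu_a(x)$ (which follows from \eqref{eq:Psi1dy} and \eqref{PsiDifferenceBurnside}), the period identity $\oint_{\beta_b}\nu_a = \tpi\Omega_{ba}$ from \eqref{eq:Omdef}, and the symmetry $\Omega_{ab} = \Omega_{ba}$. The real obstacle lies here: the piece $\Psi_2(x,z)dz\,\partial_z$ of $\nabla_{\Cgtwo}(x)$ combines with $\Psi_2^{(0,1)}(x,z)dz$ into a total $z$-derivative of $\Psi_2(x,z)\omega(y,z)$ whose $\beta_a$ boundary contribution is nonzero due to the quasiperiodicity \eqref{eq:Psigamy} of $\Psi_2$, and $\nabla_{\Cg}(x)$ does not naively commute with cycle integration because $\beta_a$ itself depends on the Schottky moduli. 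One must therefore invoke the construction of~\cite{T1,O}, in which $\nabla_{\Cgn}(x)$ is engineered precisely to descend to a well-defined differential operator $\nabla_{\Mgn}(x)$ on punctured moduli space, and the quasi-periodicity boundary term of $\Psi_2(x,\cdot)$ along $\beta_a$ cancels exactly against the variation of the cycle under the Schottky parameter change. Granting this commutation, \eqref{NablaNuDifferentialEqn} follows immediately from one $\beta_a$-integration of \eqref{NablaOmega}, and \eqref{eq:Rauch} from a further $\beta_b$-integration of \eqref{NablaNuDifferentialEqn}.
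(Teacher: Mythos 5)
Your proposal follows the paper's own proof essentially step for step: \eqref{omegaDE} and \eqref{NablaOmega} are obtained, exactly as in the paper, by equating the Ward identities of Propositions~\ref{prop:nptWard} and \ref{prop:omWard} (with $C=1$) against the Wick-expansion evaluations of $\F_{M}^{(g)}(\omega,x;\omega,y)$ and $\F_{M}^{(g)}(\omega,x;h,y;h,z)$, and \eqref{NablaNuDifferentialEqn}, \eqref{eq:Rauch} then follow by successive $\beta$-cycle integration. The one step you defer to~\cite{T1,O} --- the interaction of $\delCg(x)$ with the moduli-dependent endpoint $\gamma_{a}z$ of $\beta_{a}$ --- is actually carried out self-containedly in the paper: from \eqref{eq:gamma_a.z}, \eqref{eq:delael} and \eqref{eq:ThetaPsi} one computes $\delCg(x)(\gamma_{a}y)=\left(\Psi_{2}(x,\gamma_{a}y)-\Psi_{2}(x,y)\right)d(\gamma_{a}y)$, so the endpoint-variation term does not cancel to zero as your phrasing suggests, but rather equals $\oint_{\beta_{a}}d\left(\Psi_{2}(x,\cdot)\,\omega(y,\cdot)\right)$, which is precisely the boundary contribution needed to reassemble the full operator of \eqref{NablaOmega} under the integral sign before that identity is applied.
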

\begin{remark}  These results are generalisations of genus two formulas~\cite{GT}.
	Following Remark~\ref{rem:Mgn} we may again choose a canonical GEM form $\Psi_{2}^{{\Can}}(x,y_{k})$ and replace $\delCgn$ by $\delMgn$ in Proposition~\ref{prop:delsomnuOm}. In particular, equation~\eqref{eq:Rauch} is equivalent to Rauch's formula~\cite{R}.
The operators $\delMg(x)$ and $\delMgn(x)$ are also discussed  in \cite{O} but the existence and a construction of the GEM form $\Psi_{2}$ was not shown. Some similar partial differential equations for Riemann surface structures  (including the prime form) appear in \cite{O}.
\end{remark}
\begin{proof}
Consider the rank one Heisenberg VOA $M$. Using a similar approach to that leading to \eqref{eq:FMhh} and \eqref{eq:VirasoroProjConn} we find
\begin{align}\label{omegaDE1}
&\F_{M}^{(g)}(\omega,x;\omega,y)=\left(\frac{1}{144}s(x)s(y)+\frac{1}{2}\omega(x,y)^{2}\right)Z_{M}^{(g)}.
\end{align}
But using Proposition~\ref{prop:omWard} (with $C=1$) %with  Remark~\ref{rem:Mgn}
 and \eqref{eq:VirasoroProjConn}  we have
\begin{align*}
\F_{M}^{(g)}(\omega,x;\omega,y) 
&=\left(\delCgone(x) +2\Psi_{2}^{(0,1)}(x,y)dy\right)\left(\frac{1}{12}s(y)Z_{M}^{(g)}\right)+\frac{1}{2}\Lambda_{2}(x,y)Z_{M}^{(g)}.
\end{align*}
This together with \eqref{prop:BosonDE} implies \eqref{omegaDE}. 

Similarly, Proposition~\ref{prop:nptSL2CProp} and \ref{prop:BosonDE} %with Remark~\ref{rem:Mgn} 
imply
\begin{align*}
&\F_{M}^{(g)}(\omega,x;h,y;h,z)=\left(\delCgtwo+\Psi_{2}^{(0,1)}(x,y)dy+\Psi_{2}^{(0,1)}(x,z)dz
+\frac{1}{12}s(x)\right)\omega(y,z)Z_{M}^{(g)}.%\label{NablaOmegaP1}
\end{align*}
Alternatively,  
\begin{align*}
\F_{M}^{(g)}(\omega,x;h,y;h,z)&=\lim_{x_{1},x_{2}\rightarrow x}\left(\F_{M}^{(g)}(h,x_{1};h,x_{2};h,y;h,z)-\frac{\F_{M}^{(g)}}{(x_{1}-x_{2})^{2}}\right)
\\
&=\left(\frac{1}{12}s(x)\omega(y,z)+\omega(x,y)\omega(x,z)\right)Z_{M}^{(g)}.
\end{align*}
Comparing these results yields~\eqref{NablaOmega}. 
\medskip

We derive  \eqref{NablaNuDifferentialEqn} and \eqref{eq:Rauch} by integrating \eqref{NablaOmega} over the $\beta_{a} $ homology cycles.
 We first note using \eqref{eq:gamma_a.z}  and the  moduli derivatives   \eqref{eq:delael} that
\begin{align*}
\del_{b,\ell}(\gamma_{a}y)=\frac{\rho_{a}}{(y-w_{a})^{\ell-2}}\delta_{ab},
\end{align*}
for $\ell=0,1,2$ and for $a,b\in\Ip$.
Then we find using \eqref{eq:ThetaPsi} that~\cite{T1}
\begin{align}
%\notag
\delCg(x)(\gamma_{a}y)&%=-(\gamma_{a}y)'\sum_{\ell=0}^{2}\Theta_{a}(x,\ell)(y-w_{a})^{\ell}\\&
=\left(\Psi_{2}(x,\gamma_{a}y)-\Psi_{2}(x, y)\right)d(\gamma_{a}y).
\label{NablaPolynomial}
\end{align}
Let $\nu_{b}(y)=n_{b}(y)dy$ so that $n_{b}(y)=n_{b}(\gamma_{a}y) (\gamma_{a}y)'$. 
\eqref{NablaPolynomial}  implies
\begin{align}
\delCg(x)(\gamma_{a}y)n_{b}(\gamma_{a}y)
%&=-\sum_{\ell=0}^{2}\Theta_{a}(x,\ell)(y-w_{a})^{\ell}dy^{-1}\nu_{b}(y)
%\notag\\&
=\left(\Psi_{2}(x,\gamma_{a}y)-\Psi_{2}(x, y)\right)\nu_{b}(y).
\label{NablaNug}
\end{align} 
Similarly letting $\omega(y,z)=f(y,z)dz$ we find that
\begin{align}\label{eq:Nablaom}
\delCg(x)(\gamma_{a}z)f(y,\gamma_{a}z)
=\left(\Psi_{2}(x,\gamma_{a}z)-\Psi_{2}(x, z)\right) \omega(y,z).
\end{align}
\eqref{PsiDifferenceBurnside} implies $\nu_{a}(y)=\oint_{\beta_{a}}\omega(y,\cdot)$ on identifying $\beta_{a}$ as a curve  from a base point $z $ to $\gamma_{a} z$. We therefore find that with $\delCgone(x)=\delCg(x)+dy\,\Psi_{2}(x,y)\partial_{y}$ then \eqref{eq:Nablaom} implies
\begin{align*}
\delCgone(x)\nu_{a}(y)&=\delCgone(x)\left(\oint_{\beta_{a}}\omega(y,\cdot )\right)
=\oint_{\beta_{a}}\delCgone(x)\omega(y, \cdot)+\delCg(x)(\gamma_{a}z)f(y,\gamma_{a}z)
\\
&=
\oint_{\beta_{a}}\delCgone(x)\omega(y,\cdot)
+\left(\Psi_{2}(x,\gamma_{a}z)-\Psi_{2}(x, z)\right)\omega(y,z)
\\
&=\oint_{\beta_{a}}\delCgone(x)\omega(y, \cdot)+ \oint_{\beta_{a}}d \left(\Psi_{2}(x,\cdot)\omega(y,\cdot)\right)
\\
&=
\omega(x,y)\nu_{a}(x)-\Psi_{2}^{(0,1)}(x,y)dy\;\nu_{a}(y),
\end{align*}
from  \eqref{NablaOmega}. Therefore \eqref{NablaNuDifferentialEqn} holds.

In a similar way, we  integrate $y$ in \eqref{NablaNuDifferentialEqn} over a $\beta_{b}$ cycle to find
\begin{align*}
\delCg(x)\left(\oint_{\beta_{b}}\nu_{a}\right)-\delCg(x) (\gamma_{b}y)n_{a}(\gamma_{b}y)+\oint_{\beta_{b}}d(\Psi_{2}(x,\cdot)\nu_{a}(\cdot))=\nu_{a}(x)\nu_{b}(x).
\end{align*}
On evaluation of the integrals and using \eqref{eq:Omdef}  and \eqref{NablaNug} we find  \eqref{eq:Rauch}.
\end{proof}
\subsection{The genus $g$ partition function for a lattice VOA }
We now find the genus $g$  partition function for the lattice VOA $V_{L}$ associated with a rank $d$ even lattice  $L$. Decomposing $V_{L}$ into  $M^{d}$ modules  
$M_{\lambda}=M^d\otimes e^{\lambda}$ we find
$V_{L}=\oplus_{\lambda\in L}M_{\lambda}$ e.g.~\cite{K}. Following \eqref{eq:Z_Walpha} we define a genus $g$ module partition function  
\begin{align*}
Z_{M_{\bm{\lambda}}}^{(g)}:=\sum_{\bm{b}_{+}\in M_{\bm{\lambda}}}Z^{(0)}(\bm{b,w}),
\end{align*}
where $M_{\bm{\lambda}}=\bigotimes_{a\in\Ip}M_{\lambda_{a}}$
and $\bm{\lambda}\in L^g$ is a $g$-tuple of lattice vectors. We find the following natural  generalization of  genus two results in~\cite{MT2, MT3}:
\begin{proposition}\label{prop:LatticeHeis1}
Let $L$ be an even lattice. Then 
\begin{align*}
Z_{M_{\bm{\lambda}}}^{(g)}=e^{\im\pi\,\bm{\lambda}.\Omega.\bm{\lambda}}\left(Z_{M}^{(g)}\right)^{d},
\end{align*}
where $Z_{M}^{(g)}$ is the rank one Heisenberg partition function and  
\begin{align*}
\bm{\lambda}.\Omega.\bm{\lambda}=\sum_{i=1}^{d}\sum_{a,b\in\Ip}\lambda_{a}^{i}\Omega_{ab}\lambda_{b}^{i},
\end{align*}
for the genus $g$ period matrix $\Omega$.
\end{proposition}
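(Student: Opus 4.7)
The plan is to derive a first-order linear partial differential equation in the Schottky moduli that is satisfied by both sides of the claimed identity, and then fix the resulting moduli-independent constant of proportionality.

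First, I apply the module genus $g$ Zhu recursion of Theorem~\ref{theor:ZhuGenusgModules} to the rank $d$ Heisenberg generators $h_i$, $i=1,\ldots,d$. Since $h_i$ is quasiprimary of weight $1$, the only universal Zhu coefficient is $\Theta_a(x;0)=\nu_a(x)$ by \eqref{eq:ThetaN1}, and since $h_i(0)$ acts on $M_{\lambda_a}$ as multiplication by the scalar $\lambda_a^i$, the $O$-coefficient is $O_a(h_i;\bm{v,y})=\lambda_a^i\,\F^{(g)}_{M_{\bm{\lambda}}}(\bm{v,y})$. Specialising to the $1$-point function gives
\begin{align*}
\F^{(g)}_{M_{\bm{\lambda}}}(h_i,y)=\lambda^i(y)\,Z^{(g)}_{M_{\bm{\lambda}}},\qquad \lambda^i(y):=\sum_{a\in\Ip}\lambda_a^i\,\nu_a(y).
\end{align*}
Iterating Zhu recursion on a second $h_i$ insertion, using $h_i(j)h_i=\delta_{j,1}\vac$ for $j\ge 0$ together with $\del_y\Psi_1(x,y)\,dy=\omega(x,y)$ from \eqref{eq:Psi1dy}, yields
\begin{align*}
\F^{(g)}_{M_{\bm{\lambda}}}(h_i,x;h_i,y)=\bigl[\lambda^i(x)\lambda^i(y)+\omega(x,y)\bigr]Z^{(g)}_{M_{\bm{\lambda}}}.
\end{align*}

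Next, mirroring the derivation \eqref{eq:FMhh}--\eqref{eq:VirasoroProjConn}, I use $\omega=\tfrac12\sum_i h_i(-1)h_i$ and associativity \eqref{eq:Z0assoc} to extract the Virasoro $1$-point function by removing the $(x-y)^{-2}$ singularity and taking $x\to y$. Since $\omega(x,y)-dx\,dy/(x-y)^2\to s(y)/6$ and $\lambda^i(x)\lambda^i(y)\to\lambda^i(y)^2$, summing over $i$ produces
\begin{align*}
\F^{(g)}_{M_{\bm{\lambda}}}(\omega,x)=\Bigl[\tfrac{d}{12}\,s(x)+\tfrac12\sum_{i=1}^d\lambda^i(x)^2\Bigr]Z^{(g)}_{M_{\bm{\lambda}}}.
\end{align*}
The module analogue of Proposition~\ref{prop:nablaMgZ}, proved by applying the adjoint relations of Remark~\ref{rem:adjoint} to the three components $O_a(\omega;\ell)$ exactly as for the vacuum module, identifies the left-hand side with $\delMg(x)\,Z^{(g)}_{M_{\bm{\lambda}}}$. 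Combined with \eqref{prop:BosonDE} and Rauch's formula \eqref{eq:Rauch}, which gives $\delMg(x)\bigl[\im\pi\,\bm{\lambda}.\Omega.\bm{\lambda}\bigr]=\tfrac12\sum_i\lambda^i(x)^2$, both $Z^{(g)}_{M_{\bm{\lambda}}}$ and $e^{\im\pi\,\bm{\lambda}.\Omega.\bm{\lambda}}\bigl(Z^{(g)}_M\bigr)^d$ satisfy the same linear PDE
\begin{align*}
\delMg(x)\log(\,\cdot\,)=\tfrac{d}{12}\,s(x)+\tfrac12\sum_{i=1}^d\lambda^i(x)^2.
\end{align*}
Since $\delMg(x)=\sum_{r=1}^{3g-3}\Phi_r(x)\partial_{m_r}$ with $\{\Phi_r\}$ a basis of holomorphic $2$-forms, see \eqref{eq:nablaMg}, the ratio $c(\bm{\lambda}):=Z^{(g)}_{M_{\bm{\lambda}}}\bigl/\bigl[e^{\im\pi\,\bm{\lambda}.\Omega.\bm{\lambda}}(Z^{(g)}_M)^d\bigr]$ is independent of all moduli of $\Mg$.

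The main obstacle is then to show $c(\bm{\lambda})=1$, a statement now purely about $\bm{\lambda}$. My plan is to match the leading Schottky asymptotics of both sides as $\rho_a\to 0$ for all $a\in\Ip$: the lowest-weight vector $e^{\lambda_a}\in M_{\lambda_a}$ has conformal weight $|\lambda_a|^2/2$, so the dominant term in $Z^{(g)}_{M_{\bm{\lambda}}}$ is $\prod_{a\in\Ip}\rho_a^{|\lambda_a|^2/2}$ times the classical Coulomb gas correlator $Z^{(0)}(e^{-\lambda_1},w_{-1};e^{\lambda_1},w_1;\ldots;e^{-\lambda_g},w_{-g};e^{\lambda_g},w_g)$, which factors as a product of $(w_a-w_b)^{\pm \lambda_a\cdot\lambda_b}$. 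On the other side, $\Omega_{aa}\sim\tfrac{1}{\tpi}\log q_a$ with $q_a\sim -\rho_a/(W_a-W_{-a})^2$ by \eqref{eq:rhoa}, so $e^{\im\pi\,\bm{\lambda}.\Omega.\bm{\lambda}}$ reproduces the leading $\prod\rho_a^{|\lambda_a|^2/2}$ factor, while the subleading Schottky expansion of the off-diagonal entries $\Omega_{ab}$ combined with $Z^{(g)}_M\to 1$ as all $\rho_a\to 0$ recovers the remaining Coulomb gas prefactor. An alternative, cleaner route is induction on $g$ via the same pinching $\rho_g\to 0$, whose base case $g=1$ follows directly from Theorem~\ref{theor:ZZhu}, since $\mathrm{Tr}_{M_\lambda}q^{L(0)}=q^{|\lambda|^2/2}\mathrm{Tr}_M q^{L(0)}$ yields $Z^{(1)}_{M_\lambda}=e^{\im\pi\tau|\lambda|^2}Z^{(1)}_M$, matching the claim and thereby completing the proof.
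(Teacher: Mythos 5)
Your proposal follows the paper's proof essentially step for step: Zhu recursion of Theorem~\ref{theor:ZhuGenusgModules} for the Heisenberg generators to get the $1$- and $2$-point functions, extraction of the Virasoro $1$-point function via the $x\to y$ limit, identification of it with $\delMg(x)Z^{(g)}_{M_{\bm{\lambda}}}$, and Rauch's formula \eqref{eq:Rauch} to integrate the resulting linear PDE. The one place you go beyond the paper is the final normalization: the paper only records $F(x,\bm{0})=1$ and then asserts $\log F=\im\pi\,\bm{\lambda}.\Omega.\bm{\lambda}$, whereas you correctly note that the moduli-independent ratio $c(\bm{\lambda})$ must still be pinned down for each $\bm{\lambda}$ and propose a Schottky degeneration to do so --- a worthwhile supplement, though your genus-induction variant needs care when $\lambda_g\neq 0$, since $Z^{(g)}_{M_{\bm{\lambda}}}$ then vanishes to order $|\lambda_g|^2/2$ in $\rho_g$ and one must compare leading coefficients (which reintroduces the Coulomb-gas matching) rather than take the naive limit.
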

\begin{proof}
Let $h^{1}, h^{2},\ldots, h^{d}\in V_{1}$ be generators of the rank $d$ Heisenberg VOA with
\begin{align*}
[h^{i}(m),h^{j}(n)]=\delta_{ij}\delta_{m,-n}.
\end{align*}
Using Zhu recursion of Theorem~\ref{theor:ZhuGenusgModules} we find
\begin{align*}
\F_{M_{\bm{\lambda}}}^{(g)}(h^{i},x)&=\nu_{\bm{\lambda}}^{i}(x)Z_{M_{\bm{\lambda}}}^{(g)},
\\
\F_{M_{\bm{\lambda}}}^{(g)}(h^{i},x;h^{j},y)&=
\left(\nu_{\bm{\lambda}}^{i}(x) \nu_{\bm{\lambda}}^{j}(y) +\delta_{ij}\omega(x,y)\right)
Z_{M_{\bm{\lambda}}}^{(g)},
\end{align*}
for 1-form $\nu_{\bm{\lambda}}^{i}(x)=\sum_{a=1}^{g}\lambda_{a}^{i}\nu_{a}(x)$.
Taking an appropriate limit of the $2$-point function (cf. the derivation of~\eqref{eq:VirasoroProjConn}) we find that for $\omega=\frac{1}{2}\sum_{i=1}^{d}h^{i}(-1)h^{i}$ then 
\begin{align*}
Z_{M_{\bm{\lambda}}}^{(g)}(\omega,x)=\left(\frac{1}{2}\nu_{\bm{\lambda}}^{2}(x)+\frac{d}{12}s(x)\right)Z_{M_{\bm{\lambda}}}^{(g)}.
\end{align*} 
By Remark~\ref{rem:WardModule} and Proposition~\ref{prop:nablaMgZ} we also have
$
Z_{M_{\bm{\lambda}}}^{(g)}(\omega,x)=\delMg(x)Z_{M_{\bm{\lambda}}}^{(g)}$
so that 
\begin{align*}
\delMg(x)Z_{M_{\bm{\lambda}}}^{(g)} = \left(\frac{1}{2}\nu_{\bm{\lambda}}^{2}(x)+\frac{d}{12}s(x)\right)Z_{M_{\bm{\lambda}}}^{(g)}.
\end{align*}
Define $F(x,\bm{\lambda})=Z_{M_{\bm{\lambda}}}^{(g)}\left(Z_{M}^{(g)}\right)^{-d}$ with $F(x,\bm{0})=1$. Hence $\delMg(x)\log F = \frac{1}{2}\nu_{\bm{\lambda}}^{2}(x)$ 
using \eqref{prop:BosonDE}.
But  Rauch's formula \eqref{eq:Rauch} implies $\log F=\im\pi\,\bm{\lambda}.\Omega.\bm{\lambda}$   and  the result follows.
\end{proof}
The genus $g$ Siegel lattice theta function is defined by 
\begin{align*}
\Theta_{L}(\Omega):=\sum_{\bm{\lambda}\in L^{g}}e^{\im\pi\,\bm{\lambda}.\Omega.\bm{\lambda}}.
\end{align*}
Using \eqref{eq:Z_WalphaSum} we therefore find
\begin{proposition}
The genus $g$ partition function for a  lattice VOA $V_L$ is given by
\begin{align*}
Z_{V_{L}}^{(g)}=\Theta_{L}(\Omega)\left(Z_{M}^{(g)}\right)^{d}.
\end{align*}
\end{proposition}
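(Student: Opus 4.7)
The plan is to invoke the preceding Proposition together with the module-sum formula \eqref{eq:Z_WalphaSum} applied to the subVOA $U = M^{d}$ inside $V = V_L$. Concretely, the lattice VOA decomposes as a direct sum of $M^d$-modules $V_L = \bigoplus_{\lambda \in L} M_\lambda$, so at genus $g$ the tensor product decomposition reads $V_L^{\otimes g} = \bigoplus_{\bm{\lambda} \in L^g} M_{\bm{\lambda}}$ where $M_{\bm{\lambda}} = \bigotimes_{a=1}^{g} M_{\lambda_a}$ and $\bm{\lambda} = (\lambda_1,\ldots,\lambda_g) \in L^g$.

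The first step is to observe that we can choose a homogeneous basis $\{\bm{b}_+\}$ of $V_L^{\otimes g}$ that respects this decomposition, namely the union over $\bm{\lambda}\in L^g$ of homogeneous bases of the summands $M_{\bm{\lambda}}$. Inserting this basis into the defining sum \eqref{GenusgPartition} for $Z_{V_L}^{(g)}$ and regrouping according to $\bm{\lambda}$ yields the module-sum identity
\begin{align*}
Z_{V_L}^{(g)} = \sum_{\bm{\lambda} \in L^g} Z_{M_{\bm{\lambda}}}^{(g)},
\end{align*}
which is the instance of \eqref{eq:Z_WalphaSum} applied to the $M^d$-module decomposition of $V_L$.

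The second and final step is to substitute the explicit formula from Proposition~\ref{prop:LatticeHeis1}, namely $Z_{M_{\bm{\lambda}}}^{(g)} = e^{\im \pi\, \bm{\lambda}.\Omega.\bm{\lambda}} \bigl(Z_M^{(g)}\bigr)^{d}$, into the right hand side. Since the factor $\bigl(Z_M^{(g)}\bigr)^{d}$ is independent of $\bm{\lambda}$ it pulls out of the sum, and the remaining sum over $\bm{\lambda} \in L^g$ of $e^{\im\pi\,\bm{\lambda}.\Omega.\bm{\lambda}}$ is by definition the Siegel theta function $\Theta_L(\Omega)$.

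There is no real obstacle here: absolute convergence of the Siegel theta series on the Schottky parameter domain (where $\Omega$ takes values in the Siegel upper half-space, so $\operatorname{Im}\Omega$ is positive definite) validates the rearrangement into a product $\Theta_L(\Omega)\bigl(Z_M^{(g)}\bigr)^d$; the genuine content of the proposition is entirely carried by Proposition~\ref{prop:LatticeHeis1}, whose proof in turn rests on Rauch's formula \eqref{eq:Rauch} for the moduli variation of $\Omega$.
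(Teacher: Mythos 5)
Your proposal is correct and matches the paper's own argument exactly: the paper likewise obtains $Z_{V_L}^{(g)}=\sum_{\bm{\lambda}\in L^{g}}Z_{M_{\bm{\lambda}}}^{(g)}$ from \eqref{eq:Z_WalphaSum} applied to the decomposition $V_L=\bigoplus_{\lambda\in L}M_{\lambda}$ and then substitutes Proposition~\ref{prop:LatticeHeis1} to factor out $\bigl(Z_M^{(g)}\bigr)^{d}$ and recognize the Siegel theta series. Your additional remark on convergence is a harmless elaboration not present in the paper.
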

\begin{remark}
This result is confirmed by an alternative combinatorial approach in~\cite{T2}. It appears to contradict recent results in~\cite{Co} concerning  the genus $g$ partition function for an even self-dual lattice VOA.
\end{remark}


\begin{thebibliography}{AGMV}
\bibitem[A]{A} Ahlfors, L. 	
Some remarks on  Teichm\"uller's space of Riemann surfaces,
Ann.Math. \textbf{74} (1961) 171-191.

\bibitem[AGMV]{AGMV} Alvarez-Gaumé, L., Moore, G. and Vafa, C.
Theta functions, modular invariance, and strings,
Comm.Math.Phys. \textbf{106}  1-–40 (1986).

\bibitem[Ba]{Ba} 
Baker, H.F.
\textit{Abel's Theorem and the Allied Theory Including the Theory of Theta Functions}, 
Cambridge University Press (Cambridge, 1995).

\bibitem[Be1]{Be1} 
Bers, L.
Inequalities for finitely generated Kleinian groups, 
J.Anal.Math. \textbf{18} 23--41 (1967).

\bibitem[Be2]{Be2} 
Bers, L.
Automorphic forms for Schottky groups, 
Adv.Math. \textbf{16}  332–-361 (1975).

\bibitem[Bo]{Bo} 
Bobenko, A.
Introduction to compact Riemann surfaces,
in  \textit{Computational Approach to Riemann Surfaces}, 
edited Bobenko, A. and Klein, C.,  Springer-Verlag (Berlin, Heidelberg, 2011).

\bibitem[Bu]{Bu} 
Burnside, W.
On a class of automorphic functions, 
Proc.L.Math.Soc.  \textbf{23} 49--88 (1891).

\bibitem[Co]{Co} 
Codogni, G.
Vertex algebras and Teichmüller modular forms, 
arXiv:1901.03079.

\bibitem[DGM]{DGM}
Dolan, L. and  Goddard, P.	and Montague, P.
Conformal field theories, representations and lattice constructions,
Commun. Math. Phys. \textbf{179} 61--120 (1996).

\bibitem[EO]{EO}
Eguchi, T. and Ooguri, H.
Conformal and current algebras on a general Riemann surface,
Nucl. Phys. \textbf{B282} 308--328 (1987).

\bibitem[Fa]{Fa} 
Fay, J.D. 
\textit{Theta Functions on Riemann Surfaces},
Lecture Notes in Mathematics, 
Vol. 352. Springer-Verlag, (Berlin-New York, 1973). 

\bibitem[FHL]{FHL} 
Frenkel, I.,  Lepowsky, J.  and Huang, Y.-Z.  
\textit{On Axiomatic Approaches to Vertex Operator Algebras and Modules}, 
Mem.AMS. \textbf{104} No. 494, (1993).

\bibitem[FK]{FK}  
Farkas, H.K. and  I. Kra, I.
\textit{Riemann Surfaces},
Springer-Verlag (New York, 1980).

\bibitem[Fo]{Fo} Ford, L.R.
\textit{Automorphic Functions},
AMS-Chelsea, (Providence, 2004).

\bibitem[GT]{GT} 
Gilroy, T. and Tuite, M.P. 
Genus two Zhu theory for vertex operator algebras,
arXiv:1511.07664. Under revision.

\bibitem[K]{K} 
Kac, V.
\textit{Vertex Algebras for Beginners},
Univ.Lect.Ser. \textbf{10}, AMS, 1998.

\bibitem[Li]{Li} 
Li, H.
Symmetric invariant bilinear forms on vertex  operator algebras, 
J.Pure.Appl.Alg. \textbf{96} 279--297 (1994).

\bibitem[LL]{LL} 
Lepowsky, J.  and Li, H.
\textit{Introduction to Vertex Operator Algebras and Their Representations}, 
Progress in Mathematics Vol. 227, Birkh\"auser, (Boston, 2004).

\bibitem[McI]{McI} McIntyre, A. 
Analytic torsion and Faddeev-Popov ghosts,
SUNY PhD thesis 2002, hdl.handle.net/11209/10688.

\bibitem[McIT]{McIT}
McIntyre, A. and  Takhtajan, L.A. 
Holomorphic factorization of determinants of Laplacians on Riemann surfaces and a higher genus generalization of Kronecker's first limit formula,
GAFA, Geom.Funct.Anal. \textbf{16}  1291--1323 (2006).

\bibitem[MT1]{MT1} 
Mason, G. and Tuite, M.P.
Vertex operators and modular forms, 
\textit{A Window into Zeta and Modular Physics}, eds. K. Kirsten and F. Williams, 
Cambridge University Press, (Cambridge, 2010),
MSRI Publications \textbf{57} 183--278 (2010).

\bibitem[MT2]{MT2}
Mason, G. and Tuite, M.P.
Free bosonic vertex operator algebras on genus two Riemann surfaces I, 
Commun.Math.Phys. \textbf{300} 673-–713  (2010).

\bibitem[MT3]{MT3}
Mason, G. and Tuite, M.P.
Free bosonic vertex operator algebras on genus two Riemann surfaces II,
in  \textit{Conformal Field Theory, Automorphic Forms and Related Topics}, 
Contributions in Mathematical and Computational Sciences \textbf{8} 183--225, Springer Verlag, (Berlin, Heidelberg,  2014). 

\bibitem[Mu]{Mu} 
Mumford, D.
\textit{Tata Lectures on Theta I and II},
Birkh\"{a}user, (Boston, 1983).

\bibitem[O]{O} 
Odesskii, A.
Deformations of complex structures on Riemann surfaces and integrable structures of Whitham type
hierarchies,
arXiv:1505.07779.

\bibitem[R]{R}
Rauch, H.E. 
On the  transcendental moduli of algebraic Riemann surfaces,
Proc. Nat. Acad. Sc. \textbf{11} 42--48 (1955).

\bibitem[T1]{T1}
Tuite, M.P.
Meromorphic extensions of Green's functions on a Riemann surface,
arXiv:1912.07947.

\bibitem[T2]{T2}
Tuite, M.P.
The bosonic vertex operator algebra and its modules on a genus $g$ Riemann surface. To appear.

\bibitem[TZ]{TZ}
Tuite, M.P. and  Zuevsky, A. 
The bosonic vertex operator algebra on a genus $g$ Riemann surface,
RIMS Kokyuroko \textbf{1756} 81--93 (2011).

\bibitem[Z]{Z}
Zhu, Y.
Modular-invariance of characters of vertex operator algebras,
J.Amer.Math.Soc. \textbf{9} 237--302  (1996).
\end{thebibliography}
\end{document}